\newtheorem{theorem}{\bf Theorem}[section]
\newtheorem{proposition}{\bf Proposition}[section]
\newtheorem{corollary}{\bf Corollary}[section]
\newtheorem{remark}{\bf Remark}[section]
\newtheorem{definition}{\bf Definition}[section]
\newtheorem{example}{\bf Example }[section]
\numberwithin{equation}{section}
\newenvironment{proof}{
	\begin{trivlist}
		\item[\hspace{\labelsep}{\em\noindent Proof.}]}{\hfill $\Box$\end{trivlist}}
\date{\plain}
\title{
	\huge\bf New multivariate Gini's indices  
	\date{December 14, 2023}
}
\author{
\large   
Marco {\bf Capaldo}$^1$\thanks{ORCID: 0000-0002-0255-8935} \qquad
Jorge {\bf Navarro} $^2$ \thanks{ORCID: 0000-0003-2822-915X}\\
\small $^1$ Dipartimento di Matematica, Universit\`a degli Studi di Salerno \\
\small Via Giovanni Paolo II, 132, I-84084 Fisciano (SA), Italy \\
\small $^2$ Departamento de Estadística e Investigación Operativa, Facultad de Matemática \\
\small Universidad de Murcia, Murcia 30100, Spain \\
\normalsize Email: mcapaldo@unisa.it, jorgenav@um.es  
}
\begin{document}
	\maketitle
	\abstract{\noindent The Gini's mean difference was defined as the expected absolute difference between a random variable and its independent copy. The corresponding normalized version, namely Gini's index, denotes two times the area between the egalitarian line and the Lorenz curve. Both are dispersion indices because they quantify how far a random variable and its independent copy are. Aiming to measure dispersion in the multivariate case, we define and study new Gini's indices. For the bivariate case we provide several results and we point out that they are ``dependence-dispersion" indices. Covariance representations are exhibited, with an interpretation also in terms of conditional distributions. Further results, bounds and illustrative examples are discussed too. Multivariate extensions are defined, aiming to apply both indices in more general settings. Then, we define efficiency Gini's indices for any semi-coherent system and we discuss about their interpretation. Empirical versions are considered in order as well to apply multivariate Gini's indices to data.
		
	\medskip\noindent
	{\bf Mathematics Subject Classification}: 62H05, 91B82, 60E15. 
	}

	\medskip
	\noindent{\bf Keywords:} Gini's index, Gini's mean difference, Dispersion measure, Copula, Bound, Coherent system.
	
	\section{Introduction}
	Information measures provide more specific details about random variables than those given by mean values, variances, and others moments. In this sense, they allow to broaden the knowledge about phenomena shaped by random variables. The Gini's index and the Gini's mean difference are two popular dispersion measures. The Gini's index is a well known measure of income inequality in a population related with the Lorenz curve. For its history and alternative formulations we refer the reader to Arnold and Sarabia \cite{Arnold:Sarabia} and Ceriani and Verme \cite{Ceriani:Verme}.
	\par 
	Given a nonnegative random variable $X$, with cumulative distribution function (c.d.f.) $F(t)=\mathsf{Pr}(X\le t)$ and survival function (s.f.) $\bar{F}= 1-F$, the Gini's mean difference of $X$ was defined as 
	\begin{equation}\label{eq:GMD}
		\mathsf{GMD}(X)\coloneqq\mathsf{E}|X-X'|=2\int_{0}^{+\infty}F(t)\bar{F}(t){\rm d}t,
	\end{equation}
	where $X'$ is an independent copy of $X$ (see, for instance, Arnold and Sarabia \cite{Arnold:Sarabia} and Yitzhaki \cite{Yitzhaki:2003}). This information measure quantifies the variability between two independent and identically distributed (i.i.d.) random variables by using the absolute mean difference. In other words, the Gini's mean difference measures how far they are respect to the egalitarian line, and in this sense it can be seen as dispersion index. A normalized version of Eq.\ (\ref{eq:GMD}), namely Gini's index of $X$, was defined as 
	\begin{equation}\label{eq:Gindex}
		\mathsf{G}(X)\coloneqq\frac{\mathsf{GMD}(X)}{2\mathsf{E}(X)}\in[0,1]
	\end{equation}
	where $0<\mathsf{E}(X)<\infty$. It represents twice the area between the egalitarian line and the Lorenz curve (see, for example, Gastwirth \cite{Gastwirth:1972} and Arnold and Sarabia \cite{Arnold:Sarabia}, p.\ 52). We point out that the Gini's indices given in Eqs.\ (\ref{eq:GMD}) and (\ref{eq:Gindex}) share many properties with the variance, since 
	\begin{equation}\label{eq:covariance Gini's} 
		\mathsf{GMD}(X)=4\mathsf{Cov}(X,F(X)),\qquad\mathsf{G}(X)=\frac{2}{\mathsf{E}(X)}\mathsf{Cov}(X,F(X)).
	\end{equation}
	They can be informative about the properties of distributions that depart from normality, see Yitzhaki \cite{Yitzhaki:2003}. For this and other reasons, Eq.\ (\ref{eq:Gindex}) plays an important role in economics and social sciences, when the normal distribution does not provide a good approximation to data (see Yitzhaki \cite{Yitzhaki:2003} and references therein). 
	\par
	Some generalizations and extensions of the Gini's index have been considered in the literature, see, for instance, Koshevoy and Mosler \cite{Koshevoy:Mosler}, Sections 5.3.2 and 7.4.1 in Arnold and Sarabia \cite{Arnold:Sarabia} and references therein. Moreover, different versions of the Gini's indices introduced above can be found in the context of concordance measures. For example, if $(X,Y)$ has copula $C$, with absolutely continuous marginal distributions, then the population version of Gini’s measure of association for $X$ and $Y$ is given by 
	$$
	\gamma_C=4\left(\int_0^1 C(u,1-u){\rm d}u-\int_0^1 (u-C(u,u)){\rm d}u\right),
	$$ 	
	see Section 5.1.4 of Nelsen \cite{Nelsen:2006} and Fuchs and Tschimpke \cite{Fuchs:Tschimpke}. Recently, the copula-distorted Gini's mean difference of $X$ has been defined in Capaldo et al.\ \cite{Capaldo:etal2}, aiming to generalize Eq.\ (\ref{eq:GMD}) for the cases in which the random variables involved are possibly dependent and where one s.f.\ (or c.d.f.) is distorted from the other, making use of a distortion function in a parametric family. Since the absolute mean difference can be view as a sort of distance between the random variables involved, related minimizing problems of distance can be found in Capaldo et al.\ \cite{Capaldo:etal2} and Ortega-Jim\'enez et al.\ \cite{Ortega-Jimenez:etal1}. Moreover, see Ortega-Jim\'enez et al.\ \cite{Ortega-Jimenez:etal2} for stochastic comparisons between absolute mean differences. 
	\par
	In this paper for a random vector $(X,Y)$ we propose and study a bivariate version of the Gini's mean difference in Eq.\ (\ref{eq:GMD}), defined as $\mathsf{GMD}(X,Y)=\mathsf{E}|X-Y|$. In this case the distribution of $Y$ is not necessarily equal to the distribution of $X$ and they can be dependent. We also define a bivariate version of the Gini's index in Eq.\ (\ref{eq:Gindex}).
	We extend both Gini's indices to the multivariate case. Moreover we focus on the meaning of the new measures that represent ``dependence-dispersion" indices. Along this line, for fixed marginal distributions, we prove that the indices decrease when the positive dependence increases. It is also related with the Schur-concavity of the underline copula. Additionally, for fixed dependence we prove that the indices decrease as the marginal dispersion decreases. We also show how they can be informative about the reliability of a system and, in this sense, we define their efficiency versions for any semi-coherent system. Empirical versions are also considered aiming to provide applications for the new Gini's indices to data. We show some illustrative examples where we find interpretations in terms of areas.  
	\par
	The paper is organized as follows. In Section \ref{sec:background} we recall useful notions regarding stochastic orders, copula functions with related properties, and some probabilistic inequalities. Section \ref{sec:bivariateGMD} is devoted to the bivariate Gini's mean difference and bivariate Gini's index. After definitions we obtain covariance representations for the new measures. Moreover, we show basic properties and provide several results, by extending them to the multivariate case. Section \ref{sec:bounds and inequalities} contains bounds and inequalities for both bivariate Gini's indices. In Section \ref{sec:effGMD} we define the efficiency Gini's mean difference for any semi-coherent system. We also define an efficiency version for the bivariate Gini's index and we discuss about its interpretation. Examples and simulations are given in Section \ref{sec:examples and simulations}, by providing also empirical versions for the new Gini's indices.

	\section{Background and notations}\label{sec:background}
	In this section we fix the notation by recalling preliminary notions such as usual stochastic and dispersive orders, copula functions and useful probabilistic inequalities.
	Throughout the paper the terms increasing and decreasing are used as ``non-decreasing" and ``non-increasing", respectively. In this paper we consider only nonnegative random variables, namely {\em random lifetime}. However we must note that some results also holds without this assumption. 
	\par
	For a random lifetime $X$ with c.d.f.\ $F$, the quantile function is denoted as $F^{-1}(u)=\sup\{x\colon F(x)\leq u\}$, for $u\in [0,1]$. In addition, in the absolutely continuous case, the probability density function (p.d.f.) is defined as $f(x)=F'(x)$ at points $x$ such that $F$ is derivable. We assume that $0<\mu=\mathsf{E}(X)=\int_{0}^{+\infty}\bar{F}(t){\rm d}t<\infty$, where $\bar{F}=1-F$.
	\par
	We say that $X$ is smaller than $Y$ in the usual {\em stochastic order}, denoted by $X\le_{st}Y$, if and only if $\bar{F}_X(t)\le\bar{F}_Y(t)$, for all $t\ge0$. If there is equality in law, then we write $X=_{st}Y$.
	We recall that $X$ is said to be smaller than $Y$ in the {\em dispersive order}, denoted as $X\le_d Y$, if and only if 
	$$
	F_X^{-1}(v)-F_X^{-1}(u) \ge F_Y^{-1}(v)- F_Y^{-1}(u)\qquad\hbox{whenever }0<u \leq v<1. 
	$$
	In addition, if $X$ and $Y$ are absolutely continuous, then  
	$$
	X\le_d Y \quad \hbox{if and only if} \quad f_X(F_X^{-1}(u))\ge f_Y(F_Y^{-1}(u))\quad\forall \, u\in(0,1).
	$$
	While the usual stochastic order is used to compare two random variables in size, the dispersive order compares their dispersion. Moreover, if $X\le_{st}Y$ then $\mathsf{E}(X)\le\mathsf{E}(Y)$, while for $X\le_{d}Y$ one has $\mathsf{Var}(X)\le\mathsf{Var}(Y)$. For more details about stochastic orders we refer the reader to Shaked and Shanthikumar \cite{Shaked:Shanthikumar}.
	\par
	From Sklar's theorem a multivariate distribution can be written in terms of the marginal distributions, making use of a suitable function, namely {\em copula function}. Thus, given a random vector ${\bf X}=(X_1,\dots,X_n)$ 
	with joint c.d.f.\
	$$
	F_{\bf X}(x_1,\dots,x_n)=\mathsf{Pr}(X_1\le x_1,\dots, X_n\le x_n)
	$$
	and marginal c.d.f.'s $F_{X_1},\dots,F_{X_n}$, it follows
	\begin{equation}\label{eq:copula}
		F_{\bf X}(x_1,\dots,x_n)=C(F_{X_1}(x_1),\dots,F_{X_n}(x_n)), 
	\end{equation}
	for $x_1,\dots,x_n\in\mathbb{R}$, where $C$ is called copula function of ${\bf X}$. If the marginal distributions are all continuous then the copula is unique. Similarly, referring to the joint s.f.\ 
	$$
	\bar{F}_{\bf X}(x_1,\dots,x_n)=\mathsf{Pr}(X_1> x_1,\dots, X_n> x_n)
	$$
	and marginal s.f.'s $\bar{F}_{X_1},\dots,\bar{F}_{X_n}$, it follows 
	\begin{equation}\label{eq:survival copula}
		\bar{F}_{\bf X}(x_1,\dots,x_n)=\widehat{C}(\bar{F}_{X_1}(x_1),\dots,\bar{F}_{X_n}(x_n)),
	\end{equation}
	for $x_1,\dots,x_n\in\mathbb{R}$, where $\widehat{C}$ is called {\em survival copula function} of ${\bf X}$. Moreover, the functions
	$$
	\delta(u)=C(u,\dots,u),\qquad\widehat{\delta}(u)=\widehat{C}(u,\dots,u),\qquad u\in[0,1],
	$$
	represent the diagonal sections of $C$ and $\widehat{C}$, respectively.
	\par 
	We recall that ${\bf X}=(X_1,\dots,X_n)$ is said to be {\em exchangeable} (exc.) if 
	$$
	(X_1,\dots,X_n)=_{st}(X_{\sigma(1)},\dots,X_{\sigma(n)})
	$$
	for all permutations $\sigma$ of order $n$. From exchangeability it follows that $X_1,\dots,X_n$ are identically distributed (i.d.).
	\par 
	For simplicity in the bivariate case we now recall some properties regarding copulas. First, for all $u,v\in[0,1]$ the relation between $C$ and $\widehat{C}$ is given by
	\begin{equation}\label{eq:copula and survival copula}
		\widehat{C}(u,v)=u+v-1+C(1-u,1-v).
	\end{equation}
	Moreover, for every copula $C$ and for all $u,v\in[0,1]$ one has
	\begin{equation}\label{eq:Fréchet-Hoeffding bounds}
		W(u,v)\le C(u,v)\le M\{u,v\},
	\end{equation}
	where $W(u,v)=\max\{u+v-1,0\}$ and $M(u,v)=\min\{u,v\}$ are two copulas called {\em Fréchet-Hoeffding (FH) bounds} (lower and upper, respectively). 
	Another important copula that we will encounter is the independence copula, usually denoted as $\Pi(u,v)=uv$, for all $u,v\in[0,1]$. Moreover, we will also consider the Farlie-Gumbel-Morgenstern (FGM) family of copulas, defined for all $u,v\in[0,1]$ as
	\begin{equation}\label{eq:FGM}
		C_{\theta}(u,v)=uv\left(1+\theta(1-u)(1-v)\right),\qquad\theta\in[-1,1].
	\end{equation}
	Clearly, from Eq.\ (\ref{eq:FGM}) one has $C_{0}(u,v)=\Pi(u,v)$. A family of copulas that includes $M,\Pi$ and $W$ is called {\em comprehensive}. An example is the Clayton family of copulas, defined for all $u,v\in[0,1]$ as
	\begin{equation}\label{eq:Clayton}
		C_{\theta}(u,v)=\left[\max(u^{-\theta}+v^{-\theta}-1,0)\right]^{-\frac{1}{\theta}},\qquad\theta\in[-1,0)\cup(0,+\infty),
	\end{equation}
	where $C_{-1}=W$, $C_0=\Pi$ and $C_{+\infty}=M$. Another example of comprehensive family of copulas is represented by the Frank family of copulas, defined for all $u,v\in[0,1]$ as
	\begin{equation}\label{eq:Frank}
		C_{\theta}(u,v)=-\frac{1}{\theta}\ln\left(1-\frac{(1-e^{-\theta u})(1-e^{-\theta v})}{e^{-\theta }-1}\right),\qquad\theta\in(-\infty,0)\cup(0,+\infty),
	\end{equation}
	where $C_{-\infty}=W$, $C_0=\Pi$ and $C_{+\infty}=M$.
	\par
	For our aims, we recall that a copula $C_1$ is smaller than another copula $C_2$ under the {\em concordance order}, and write $C_1\prec C_2$, if $C_1(u,v)\le C_2(u,v)$ for all $u,v\in[0,1]$. Moreover, a random vector $(X,Y)$ with a copula $C$ is said to be positively quadrant dependent (PQD), if and only if $\Pi\prec C$. Conversely it has the negatively quadrant dependent (NQD) property if and only if $C\prec\Pi$. Throughout the paper, we will denote with $\partial_1{C}$ (respectively $\partial_2{C}$) the partial derivative with respect to the first (second) argument of $C$. For these and other basic properties of copulas we refer the reader to Durante and Sempi \cite{Durante:Sempi} and Nelsen \cite{Nelsen:2006}.
	\par 
	In addition, we now recall some properties for bivariate c.d.f.'s or s.f.'s, that clearly are related with copulas and survival copulas from Eqs.\ (\ref{eq:copula}) and (\ref{eq:survival copula}). Let $(X,Y)$ be a random vector with joint s.f.\ $\bar{H}$. We say that $\bar{H}$ is Schur-concave (Schur-convex) if
	\begin{equation}\label{eq:Schur}
		\bar{H}(x,y)\leq\bar{H}(\tilde x,\tilde y)\,(\geq),
	\end{equation}
	for all $x+y=\tilde x+\tilde y$ and $\min(x,y)\leq \min (\tilde x,\tilde y)$, see Durante and Sempi \cite{Durante:Sempi}, p.\ 278. We remark that the Fréchet-Hoeffding lower bound in Eq.\ (\ref{eq:Fréchet-Hoeffding bounds}) represents the only Schur-convex copula (see, e.g., Nelsen \cite{Nelsen:2006}, p.\ 104). This properties can be relaxed as follows: $\bar{H}$ is weakly Schur-concave (Schur-convex) if
	\begin{equation}\label{eq:weakly Schur}
		\bar{H}(x,y)\leq\bar{H}(z,z)\,(\geq),
	\end{equation}
	for all $x,y$ and $z=(x+y)/2$, see Durante and Papini \cite{Durante:Papini}. Moreover, $\bar{H}$ is Schur-constant if 
	\begin{equation}\label{eq:Schur-constant}
		\bar{H}(x,y)=\bar{G}(x+y), 
	\end{equation}
	for all $x,y\ge0$, where $\bar{G}$ is a univariate s.f., see Caramellino and Spizzichino \cite{Caramellino:Spizzichino}.
	We remark that in the case of Schur-constant $X$ and $Y$ are exc. See also Pellerey and Navarro \cite{Pellerey:Navarro} for Schur-constant joint s.f.\ related to monotonicity properties of dependent variables given their sum.
	\par 
	We conclude this section by recalling well known probabilistic inequalities, useful for our aims. 
	Given a random variable $Z$ and a convex function $\psi$, the Jensen's inequality states that 
	\begin{equation}\label{eq:Jensen's inequality}
		\psi\left(\mathsf{E}(Z)\right)\le\mathsf{E}\left(\psi(Z)\right).
	\end{equation}
	Moreover, if $Z$ is a random lifetime and $a>0$, the Markov's inequality states that
	\begin{equation}\label{eq:Markov's inequality}
		\mathsf{Pr}\left(Z\ge a\right)\le\frac{\mathsf{E}(Z)}{a}.
	\end{equation}
	%

	\section{Bivariate Gini's indices}\label{sec:bivariateGMD}
	We now define a bivariate version of the Gini's mean difference introduced in Eq.\ (\ref{eq:GMD}), and we study its basic properties. The formal definition can be stated as follows. 
	\begin{definition}\label{def:bivariateGMD}
		Let	$(X,Y)$ be a random vector. The bivariate Gini's mean difference of $(X,Y)$ is defined as 
		\begin{equation}\label{eq:bivariateGMD}
			\mathsf{GMD}(X,Y)=\mathsf{E}|X-Y|.
		\end{equation}
	\end{definition}
	\par
	For our aims, we denote $U=\max\{X,Y\}$ and $L=\min\{X,Y\}$. By using $|x-y|=\max\{x,y\}-\min\{x,y\}$ in Eq.\ (\ref{eq:bivariateGMD}), it follows 
	\begin{equation}\label{eq:mean difference}
		\mathsf{GMD}(X,Y)=\mathsf{E}(U)-\mathsf{E}(L).
	\end{equation}
	Therefore, Eq.\ (\ref{eq:mean difference}) can be seen as the expected distance between the lifetimes of a parallel system and a series system, both having two components distributed as $X$ and $Y$, respectively. Note that $X$ and $Y$ can be dependent.  Moreover, since $|x-y|=x+y-2\min\{x,y\}$, from Eq.\ (\ref{eq:bivariateGMD}) it also follows 
	\begin{equation}\label{eq:bivariateGMD mean}
		\mathsf{GMD}(X,Y)=\mathsf{E}(X)+\mathsf{E}(Y)-2\mathsf{E}(L), 
	\end{equation}
	and, by using Eqs.\ (\ref{eq:mean difference}) and (\ref{eq:bivariateGMD mean}), one has the well known property  $\mathsf{E}(X)+\mathsf{E}(Y)=\mathsf{E}(U)+\mathsf{E}(L)$.
	\par 
	Suppose now that $(X,Y)$ has survival copula $\widehat{C}$, with marginal s.f.'s $\bar{F}_X,\bar{F}_Y$, respectively. Then, for all $t\ge 0$ one has 
	$$
	\bar{F}_{L}(t)=\mathsf{Pr}\left(X>t,Y>t\right)=\widehat{C}(\bar{F}_X(t),\bar{F}_Y(t))
	$$
	and therefore Eq.\ (\ref{eq:bivariateGMD mean}) can be rewritten as
	\begin{equation}\label{eq:bivariateGMD s.f.}
		\mathsf{GMD}(X,Y)=\int_{0}^{+\infty}\left\{\bar{F}_X(t)+\bar{F}_Y(t)-2\widehat{C}(\bar{F}_X(t),\bar{F}_Y(t))\right\}{\rm d}t.
	\end{equation}
	Clearly, if $X$ and $Y$ are i.i.d.\ Eq.\ (\ref{eq:bivariateGMD s.f.}) is the univariate Gini's mean difference in Eq.\ (\ref{eq:GMD}). If $\bar{F}_Y(t)=h_\alpha(\bar{F}_X(t))$ for all $t$, where $h_\alpha$ is a distortion function in a parametric family with  real value $\alpha$ and, by considering also a parametric family of survival copulas for $(X,Y)$, Eq.\ (\ref{eq:bivariateGMD s.f.}) reduces to the copula-distorted Gini's mean difference defined in Capaldo et al.\ \cite{Capaldo:etal2}. One can use the c.d.f.'s configuration instead of the s.f.'s one, since from Eqs.\ (\ref{eq:copula and survival copula}) and (\ref{eq:bivariateGMD s.f.}) it follows
	\begin{equation}\label{eq:bivariateGMD c.d.f.}
		\mathsf{GMD}(X,Y)=\int_{0}^{+\infty}\left\{F_X(t)+F_Y(t)-2C(F_X(t),F_Y(t))\right\}{\rm d}t,
	\end{equation}
	where $C$ denotes the copula function of $(X,Y)$, and $F_X$, $F_Y$ the marginal c.d.f.'s.
	\par
	As for the univariate case in Eq.\ (\ref{eq:Gindex}), we now define the bivariate Gini's index as a ratio related to Eq.\ (\ref{eq:bivariateGMD}). 
	\begin{definition}
		Let $(X,Y)$ be a random vector. The bivariate Gini's index of $(X,Y)$ is defined as 
		\begin{equation}\label{eq:bivariateGindex}
			\mathsf{G}(X,Y)=\frac{\mathsf{GMD}(X,Y)}{\mathsf{E}(X)+\mathsf{E}(Y)}.
		\end{equation}
	\end{definition}
	Clearly, if $X$ and $Y$ are i.i.d., Eq.\ (\ref{eq:bivariateGindex}) is the univariate Gini's index in Eq.\ (\ref{eq:Gindex}). We remark that Eq.\ (\ref{eq:bivariateGindex}) extends the copula-distorted Gini's index defined in \cite{Capaldo:etal2}. Recalling Eq.\ (\ref{eq:mean difference}), one has  
	\begin{equation}\label{eq:mean index}
		\mathsf{G}(X,Y)=\frac{\mathsf{E}(U)-\mathsf{E}(L)}{\mathsf{E}(U)+\mathsf{E}(L)}.
	\end{equation}
	Hence $\mathsf{G}(X,Y)\in[0,1]$, with $\mathsf{G}(X,Y)=0$ if and only if $\mathsf{E}(U)=\mathsf{E}(L)$.
	In addition, $\mathsf{G}(X,Y)=1$ when $\mathsf{E}(L)=0$.
	\par
	As for the univariate case (see Eq.\ (\ref{eq:covariance Gini's})), we now explore the possibility to express the bivariate Gini's indices in terms of a covariance between the random lifetimes involved and their suitable transformations based on distributions. For this reason, given a random vector $(X,Y)$ we define the following functions
	\begin{equation}\label{eq:gamma with conditional}
		\gamma_1(t)\coloneqq{\mathsf{Pr}\left(Y>t|X=t\right)},\qquad\gamma_2(t)\coloneqq{\mathsf{Pr}\left(X>t|Y=t\right)},\quad t\ge 0.	
	\end{equation}
	If $\widehat{C}$ denotes the survival copula of $(X,Y)$, then it is easy to get (see, for instance, Navarro and Sordo \cite{Navarro:Sordo} or Nelsen \cite{Nelsen:2006}, p.\ 217)
	\begin{equation}\label{eq:gamma with copula}
		\gamma_1(t)=\partial_1\widehat{C}\left(\bar{F}_X(t),\bar{F}_Y(t)\right),\qquad \gamma_2(t)=\partial_2\widehat{C}\left(\bar{F}_X(t),\bar{F}_Y(t)\right),\quad t\ge 0.	
	\end{equation}
	In the next result we first obtain a covariance representation of $\mathsf{E}(L)$. 
	\begin{proposition}
		Let $(X,Y)$ be a random vector with absolutely continuous distribution. Then 
		\begin{equation}\label{eq:covariance representation of E(L)}
			\mathsf{E}(L)=\mathsf{Cov}\left(X,\gamma_1(X)\right)+\mathsf{Cov}\left(Y,\gamma_2(Y)\right)+\mathsf{E}(X)\mathsf{E}(\gamma_1(X))+\mathsf{E}(Y)\mathsf{E}(\gamma_2(Y)).
		\end{equation}
	\end{proposition}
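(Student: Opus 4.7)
The plan is to reduce the claimed identity, via the standard covariance-expectation decomposition $\mathsf{E}(ZW)=\mathsf{Cov}(Z,W)+\mathsf{E}(Z)\mathsf{E}(W)$, to the more transparent equality
\[
\mathsf{E}(L)=\mathsf{E}\bigl(X\,\gamma_1(X)\bigr)+\mathsf{E}\bigl(Y\,\gamma_2(Y)\bigr),
\]
and then to prove this latter identity by splitting $L=\min\{X,Y\}$ according to which of $X$, $Y$ is the minimum and conditioning on the smaller variable.

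More concretely, I would first observe that absolute continuity of the joint distribution forces $\mathsf{Pr}(X=Y)=0$, since $\{(x,y):x=y\}$ has two-dimensional Lebesgue measure zero. Hence we may write almost surely
\[
L=X\,\mathbf{1}_{\{X<Y\}}+Y\,\mathbf{1}_{\{Y<X\}},
\]
and, taking expectations,
\[
\mathsf{E}(L)=\mathsf{E}\bigl(X\,\mathbf{1}_{\{X<Y\}}\bigr)+\mathsf{E}\bigl(Y\,\mathbf{1}_{\{Y<X\}}\bigr).
\]
For the first term I would use the tower property, conditioning on $X$:
\[
\mathsf{E}\bigl(X\,\mathbf{1}_{\{X<Y\}}\bigr)=\mathsf{E}\!\left[X\,\mathsf{Pr}(Y>X\mid X)\right]=\mathsf{E}\bigl(X\,\gamma_1(X)\bigr),
\]
where the last equality uses the very definition of $\gamma_1$ in \eqref{eq:gamma with conditional}. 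The analogous step, conditioning on $Y$, yields $\mathsf{E}(Y\,\mathbf{1}_{\{Y<X\}})=\mathsf{E}(Y\,\gamma_2(Y))$.

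Finally, I would apply $\mathsf{E}(X\gamma_1(X))=\mathsf{Cov}(X,\gamma_1(X))+\mathsf{E}(X)\mathsf{E}(\gamma_1(X))$ and the symmetric identity for $Y$, then add the two pieces, which delivers exactly \eqref{eq:covariance representation of E(L)}. Note that the representations in \eqref{eq:gamma with copula} via $\partial_1\widehat{C}$ and $\partial_2\widehat{C}$ play no role in the argument itself; they are only a by-product that allows the right-hand side to be read in terms of the survival copula.

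The only delicate point is justifying the conditional-probability step $\mathsf{E}[X\mathbf{1}_{\{X<Y\}}]=\mathsf{E}[X\,\mathsf{Pr}(Y>X\mid X)]$, which is a routine tower-property computation but relies on the existence of a regular conditional distribution of $Y$ given $X$; this is guaranteed by the absolute-continuity assumption. Beyond that, the argument is essentially bookkeeping, and I do not foresee any significant obstacle.
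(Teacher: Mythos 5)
Your proof is correct, but it follows a genuinely different route from the paper. The paper works through the survival copula: it differentiates $\bar{F}_L(t)=\widehat{C}(\bar{F}_X(t),\bar{F}_Y(t))$ to obtain $f_L(t)=f_X(t)\,\partial_1\widehat{C}(\bar{F}_X(t),\bar{F}_Y(t))+f_Y(t)\,\partial_2\widehat{C}(\bar{F}_X(t),\bar{F}_Y(t))$, identifies the partial derivatives with $\gamma_1$ and $\gamma_2$ via Eq.\ (\ref{eq:gamma with copula}), and integrates to get $\mathsf{E}(L)=\mathsf{E}(X\gamma_1(X))+\mathsf{E}(Y\gamma_2(Y))$, finishing with the covariance decomposition exactly as you do. You instead reach that same intermediate identity pathwise: since absolute continuity gives $\mathsf{Pr}(X=Y)=0$, you write $L=X\mathbf{1}_{\{X<Y\}}+Y\mathbf{1}_{\{Y<X\}}$ and apply the tower property, using only the conditional-probability definition of $\gamma_1,\gamma_2$ in Eq.\ (\ref{eq:gamma with conditional}) and never touching the copula. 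Your argument is more elementary and sidesteps the (mild) justification needed to differentiate the composed survival copula and write down $f_L$; its only technical requirement is the existence of regular conditional distributions and the substitution $\mathsf{Pr}(Y>X\mid X=t)=\mathsf{Pr}(Y>t\mid X=t)$, both standard here. What the paper's route buys in exchange is the explicit density of $L$ and a derivation that stays inside the copula framework used throughout the article, which is what makes the subsequent copula-based reading of the covariance representation immediate; in your version that reading is, as you note, a by-product of Eq.\ (\ref{eq:gamma with copula}) rather than part of the proof.
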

	\begin{proof}
		Since $\bar{F}_L(t)=\widehat{C}\left(\bar{F}_X(t),\bar{F}_Y(t)\right)$, the p.d.f.\ of $L$ is
		$$
		f_L(t)=f_X(t)\partial_1\widehat{C}\left(\bar{F}_X(t),\bar{F}_Y(t)\right)+f_Y(t)\partial_2\widehat{C}\left(\bar{F}_X(t),\bar{F}_Y(t)\right),\qquad t\ge0,
		$$
		where $f_X$ and $f_Y$ are the marginal p.d.f.'s. Therefore, recalling Eq.\ (\ref{eq:gamma with copula}), one has 
		$$
		\mathsf{E}(L)=\mathsf{E}(X\gamma_1(X))+\mathsf{E}(Y\gamma_2(Y)).
		$$
		Finally, Eq.\ (\ref{eq:covariance representation of E(L)}) follows from the definition of covariance.
	\end{proof}
	If $X$ and $Y$ are absolutely continuous, then from Eq.\ (\ref{eq:gamma with conditional}) one has $\mathsf{E}(\gamma_1(X))=\mathsf{Pr}\left(Y>X\right)$ and $\mathsf{E}(\gamma_2(Y))=1-\mathsf{E}(\gamma_1(X))$.
	Therefore Eq.\ (\ref{eq:covariance representation of E(L)}) can also be written as
	\begin{equation}\label{eq:covariance representation of E(L) 2}
		\mathsf{E}(L)=\mathsf{Cov}\left(X,\gamma_1(X)\right)+\mathsf{Cov}\left(Y,\gamma_2(Y)\right)+\left(\mathsf{E}(X)-\mathsf{E}(Y)\right)\mathsf{Pr}\left(Y>X\right)+\mathsf{E}(Y).
	\end{equation}
	Along the same line, in the next result we obtain a covariance representation for the bivariate Gini's mean difference.
	Recalling Eq.\ (\ref{eq:bivariateGindex}), dividing by $\mathsf{E}(X)+\mathsf{E}(Y)$ one can obtain a covariance representation for the bivariate Gini's index. 
	\begin{theorem}\label{th: covariance bivariate GMD}
		Let $(X,Y)$ be a random vector with absolutely continuous distribution. Then 
		\begin{equation*}\label{eq:cov bivariate GMD}
			\mathsf{GMD}(X,Y)=2\left[\mathsf{E}(X)-\mathsf{E}(Y)\right]\left[\frac12-\mathsf{Pr}(Y>X)\right]-2\mathsf{Cov}(X,\gamma_1(X))-2\mathsf{Cov}(Y,\gamma_2(Y)). 
		\end{equation*}
	\end{theorem}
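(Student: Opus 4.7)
The plan is to derive the claimed identity by substituting the covariance representation of $\mathsf{E}(L)$ already established in Eq.\ (\ref{eq:covariance representation of E(L) 2}) into the elementary formula (\ref{eq:bivariateGMD mean}) for $\mathsf{GMD}(X,Y)$. All the probabilistic content (the decomposition of the p.d.f.\ of $L$, the use of the partial derivatives of $\widehat{C}$, and the identity $\mathsf{E}(\gamma_1(X))=\mathsf{Pr}(Y>X)=1-\mathsf{E}(\gamma_2(Y))$) has already been absorbed in the preceding proposition and the remark that precedes the theorem, so no further probabilistic input is needed.

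Concretely, I would proceed in two short steps. First, recall from (\ref{eq:bivariateGMD mean}) that
\begin{equation*}
\mathsf{GMD}(X,Y)=\mathsf{E}(X)+\mathsf{E}(Y)-2\,\mathsf{E}(L).
\end{equation*}
Second, substitute the expression for $\mathsf{E}(L)$ given in (\ref{eq:covariance representation of E(L) 2}), namely
\begin{equation*}
\mathsf{E}(L)=\mathsf{Cov}(X,\gamma_1(X))+\mathsf{Cov}(Y,\gamma_2(Y))+(\mathsf{E}(X)-\mathsf{E}(Y))\,\mathsf{Pr}(Y>X)+\mathsf{E}(Y).
\end{equation*}
After substitution, the term $\mathsf{E}(X)+\mathsf{E}(Y)-2\mathsf{E}(Y)$ collapses to $\mathsf{E}(X)-\mathsf{E}(Y)$, and one factors the remaining $\mathsf{E}(X)-\mathsf{E}(Y)$ together with the Pr-term to obtain $(\mathsf{E}(X)-\mathsf{E}(Y))[1-2\mathsf{Pr}(Y>X)]$. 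Pulling out the factor $2$ yields precisely $2(\mathsf{E}(X)-\mathsf{E}(Y))[\tfrac12-\mathsf{Pr}(Y>X)]$, while the two covariance terms appear with coefficient $-2$, matching the statement.

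There is essentially no obstacle: the proof is a one-line algebraic manipulation once (\ref{eq:bivariateGMD mean}) and (\ref{eq:covariance representation of E(L) 2}) are in hand. The only thing worth double-checking is the sign bookkeeping in the factor $[\tfrac12-\mathsf{Pr}(Y>X)]$, which comes from rewriting $1-2\mathsf{Pr}(Y>X)$ as $2(\tfrac12-\mathsf{Pr}(Y>X))$ and which forces the absolutely continuous assumption (so that $\mathsf{E}(\gamma_1(X))=\mathsf{Pr}(Y>X)$ holds without a boundary correction at ties $X=Y$). This is already the hypothesis of the theorem, so nothing further is required.
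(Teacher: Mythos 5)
Your proposal is correct and is exactly the paper's own argument: the authors likewise prove the theorem by substituting the covariance representation of $\mathsf{E}(L)$ in Eq.\ (\ref{eq:covariance representation of E(L) 2}) into Eq.\ (\ref{eq:bivariateGMD mean}) and simplifying. The algebra and the role of absolute continuity (ensuring $\mathsf{E}(\gamma_1(X))=\mathsf{Pr}(Y>X)$) are handled as in the paper, so nothing is missing.
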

	\begin{proof}
		The thesis immediately follows from Eqs.\ (\ref{eq:bivariateGMD mean}) and (\ref{eq:covariance representation of E(L) 2}).
	\end{proof}
	\begin{corollary}
		Under the same assumptions of Theorem \ref{th: covariance bivariate GMD}:
		\begin{description}
			\item[(i)] if $\mathsf{E}(X)=\mathsf{E}(Y)$ or if $\mathsf{Pr}(Y>X)=1/2$, then
			$$
			\mathsf{GMD}(X,Y)=-2\mathsf{Cov}(X,\gamma_1(X))-2\mathsf{Cov}(Y,\gamma_2(Y));
			$$
			\item[(ii)] if $X$ and $Y$ are exc., then 
			\begin{equation}\label{eq:covariance bivariate GMD exc.}
				\mathsf{GMD}(X,Y)=-4\mathsf{Cov}(X,\gamma_1(X)).
			\end{equation}
		\end{description} 
	\end{corollary}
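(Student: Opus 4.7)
The plan is to apply Theorem \ref{th: covariance bivariate GMD} directly and observe that, under each of the three sufficient conditions stated, the ``extra'' term $2[\mathsf{E}(X)-\mathsf{E}(Y)][\tfrac12 - \mathsf{Pr}(Y>X)]$ vanishes, and that in case (ii) the two remaining covariances coincide. The work is minimal once the exchangeability implications are spelled out.

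For part (i), I would simply note that the extra term is a product: if $\mathsf{E}(X)=\mathsf{E}(Y)$ then the first factor is $0$, whereas if $\mathsf{Pr}(Y>X)=1/2$ then the second factor is $0$. In either case the product vanishes, and the stated formula follows from Theorem \ref{th: covariance bivariate GMD}.

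For part (ii), I would reduce the exchangeable case to case (i) and then show the two covariances are equal. From exchangeability, $X=_{st}Y$, so $\mathsf{E}(X)=\mathsf{E}(Y)$ (already giving us the formula from (i)). To show $\mathsf{Cov}(X,\gamma_1(X))=\mathsf{Cov}(Y,\gamma_2(Y))$, I would argue that under exchangeability the survival copula $\widehat{C}$ is symmetric, so by \eqref{eq:gamma with copula} together with $\bar{F}_X=\bar{F}_Y$ one has $\gamma_1(t)=\gamma_2(t)$ for every $t\ge 0$; combined with $X=_{st}Y$ this yields $(X,\gamma_1(X))=_{st}(Y,\gamma_2(Y))$, hence equality of covariances. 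Substituting into the formula from (i) gives $\mathsf{GMD}(X,Y)=-4\mathsf{Cov}(X,\gamma_1(X))$, which is \eqref{eq:covariance bivariate GMD exc.}.

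No real obstacle is expected: the corollary is essentially a bookkeeping consequence of the theorem plus the symmetry built into exchangeability. The only point that requires a line of justification is the identity $\gamma_1\equiv\gamma_2$, which relies on the symmetry of the survival copula in the exchangeable (absolutely continuous) case rather than on exchangeability alone. (As a sanity check, in the exchangeable absolutely continuous setting $\mathsf{Pr}(X=Y)=0$ and $\mathsf{Pr}(Y>X)=\mathsf{Pr}(X>Y)=1/2$, which reconfirms that the extra term in Theorem \ref{th: covariance bivariate GMD} disappears.)
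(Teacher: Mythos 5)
Your proposal is correct and follows exactly the route the paper intends: the corollary is stated as an immediate consequence of Theorem \ref{th: covariance bivariate GMD} (the paper gives no separate proof), with (i) following because one factor of the extra term vanishes and (ii) following from $\mathsf{E}(X)=\mathsf{E}(Y)$ plus $\gamma_1\equiv\gamma_2$ and $X=_{st}Y$ under exchangeability, which is precisely your argument. Your added justification that the symmetric survival copula (unique in the absolutely continuous case) gives $\gamma_1=\gamma_2$, hence equal covariances, is the right bookkeeping and introduces no gap.
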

	\par 
	From the left-hand-side of Eq.\ (\ref{eq:gamma with conditional}), we denote with $F_{2|1}(t|t)=\mathsf{Pr}\left(Y\le t|X=t\right)=1-\gamma_1(t)$, for all $t\ge0$, the probability that the second lifetime is less or equal to $t$ conditioned by the fact that the first lifetime is equal to $t$. If $X$ and $Y$ are exc., from Eqs.\ (\ref{eq:bivariateGindex}) and (\ref{eq:covariance bivariate GMD exc.}), it follows  
	\begin{equation}\label{eq:cov bivariate Gini index}
		\mathsf{G}(X,Y)=\frac{2}{\mathsf{E}(X)}\mathsf{Cov}(X,F_{2|1}(X|X))=-\frac{2}{\mathsf{E}(X)}\mathsf{Cov}(X,\gamma_1(X)),
	\end{equation}
	that extends the covariance representation of the univariate Gini's index in Eq.\ (\ref{eq:covariance Gini's}). Under the exchangeability assumption, Eq.\ (\ref{eq:cov bivariate Gini index}) represents another way to compute the bivariate Gini's index making use of conditional distributions instead of copula representations. 
	\par
	As an example, given a random vector $(X,Y)$, let us consider the exponential conditional distributions with joint p.d.f.\ given by 
	$$
	f(x,y)=\exp\{-(c_{11}+c_{12}x+c_{21}y+c_{22}xy)\},\qquad x>0,y>0,
	$$
	where $c_{12}>0$, $c_{21}>0$, $c_{22}\ge0$ (the case $c_{22}=0$ corresponds to the independent case) and 
	\begin{equation}\label{eq:c11}
		c_{11}=\ln\left[\frac{1}{c_{12}c_{21}}\int_{0}^{+\infty}e^{-u}\left(1+\frac{c_{22}u}{c_{12}c_{21}}\right)^{-1}{\rm d}u\right].	
	\end{equation}
	Therefore one has  
	$$
	\mathsf{Pr}\left(Y>y|X=x\right)=\exp\{-(c_{21}+c_{22}x)y\},\qquad x>0,y>0,
	$$
	(see Arnold et al.\ \cite{Arnold:etal1} and \cite{Arnold:etal2}).
	For instance, if $c_{12}=c_{21}=c_{22}=1$, from Eq.\ (\ref{eq:c11}) one has $c_{11}=-0.516932$. Hence, recalling Eq.\ (\ref{eq:gamma with conditional}), it follows
	$$
	\gamma_1(t)=\exp\{-(t+t^2)\},\qquad t>0. 
	$$
	Under these assumptions, $X$ and $Y$ are exc.\ and one numerically has $\mathsf{E}(X)=0.676875$, $\mathsf{E}(\gamma_1(X))=0.5$ and $\mathsf{E}(X\gamma_1(X))=0.135428$. Therefore, from the definition of covariance and Eq.\ (\ref{eq:cov bivariate Gini index}), one obtains $\mathsf{G}(X,Y)=0.599843$.
	\par 
	In the next result we show some basic properties satisfied by the bivariate Gini's mean difference.
	\begin{theorem}\label{th:properties GMD}
		The bivariate Gini's mean difference of $(X,Y)$ satisfies the following properties:
		\begin{description}
			\item{(i)} (law invariance) if $(X,Y)=_{st}(\tilde{X},\tilde{Y})$, then $\mathsf{GMD}(X,Y)=\mathsf{GMD}(\tilde{X},\tilde{Y})$;
			\item{(ii)} (translation invariance) $\mathsf{GMD}(X+\lambda,Y+\lambda)=\mathsf{GMD}(X,Y)$, for all $\lambda\in\mathbb{R}$;
			\item{(iii)} (homogeneity) $\mathsf{GMD}(\lambda X,\lambda Y)=|\lambda|\mathsf{GMD}(X,Y)$, for all $\lambda\in\mathbb{R}$;
			\item{(iv)} (non-negativity) $\mathsf{GMD}(X,Y)\ge0$ for all $X$ and $Y$. In addition
			if $X$ and $Y$ are i.d.\ and $C(u,v)=\min\{u,v\}$, for all $u,v\in[0,1]$, then $\mathsf{GMD}(X,Y)=0$.
			Conversely, if $\mathsf{GMD}(X,Y)=0$, then $F_X=F_Y$ and if $F_X$ is continuous then $C(u,v)=\min\{u,v\}$, for all $u,v\in[0,1]$.
		\end{description}
	\end{theorem}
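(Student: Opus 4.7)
The strategy is to treat items (i)--(iii) as immediate consequences of the representation $\mathsf{GMD}(X,Y)=\mathsf{E}|X-Y|$ combined with elementary properties of the absolute value function, so that the real work concentrates on item (iv), and within (iv) on the converse implication linking vanishing $\mathsf{GMD}$ to the upper Fr\'echet--Hoeffding copula.

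For (i), I would note that $|x-y|$ is a Borel measurable function of the pair $(x,y)$, hence the joint law of $(X,Y)$ determines the law of $|X-Y|$, and in particular its expectation. For (ii) the cancellation $(X+\lambda)-(Y+\lambda)=X-Y$ finishes it in one line; for (iii) the identity $|\lambda X-\lambda Y|=|\lambda|\,|X-Y|$ together with linearity of expectation does the job. The first assertion of (iv), non-negativity, is simply $\mathsf{E}|X-Y|\ge 0$.

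For the direct implication in (iv), I would invoke the comonotonic representation associated with the upper Fr\'echet--Hoeffding copula $M$: if $(X,Y)$ has copula $M$, then $(X,Y)=_{st}(F_X^{-1}(V),F_Y^{-1}(V))$ for some $V\sim\mathcal{U}(0,1)$. The identically distributed hypothesis gives $F_X=F_Y$, so the two coordinates of this representation coincide almost surely. Applying the law invariance already proven in (i) then yields $\mathsf{GMD}(X,Y)=0$.

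For the converse, the vanishing of $\mathsf{E}|X-Y|$ combined with $|X-Y|\ge 0$ forces $X=Y$ almost surely, from which $F_X=F_Y$ is immediate. Assuming further that $F_X$ is continuous, I would compute the joint c.d.f.\ directly as
\[
\mathsf{Pr}(X\le x,\,Y\le y)=\mathsf{Pr}(X\le \min\{x,y\})=F_X(\min\{x,y\})=\min\{F_X(x),F_Y(y)\},
\]
and then invoke the uniqueness clause of Sklar's theorem, which is valid precisely under the continuity hypothesis, to conclude $C(u,v)=\min\{u,v\}$ on $[0,1]^2$. The main delicate point is exactly this last step: without continuity the copula would only be uniquely determined on $\mathrm{Range}(F_X)\times\mathrm{Range}(F_Y)$, which is why the continuity of $F_X$ is essential in the statement.
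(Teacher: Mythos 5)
Your proposal is correct; items (i)--(iii) and the non-negativity in (iv) match the paper's (essentially immediate) argument, but in the two substantive claims of (iv) you follow a genuinely different route. For the direct implication the paper simply plugs $F_X=F_Y$ and $C=M$ into the representation $\mathsf{GMD}(X,Y)=\int_0^{+\infty}\{F_X(t)+F_Y(t)-2C(F_X(t),F_Y(t))\}\,{\rm d}t$, where the integrand vanishes identically, whereas you use the comonotonic representation $(X,Y)=_{st}(F_X^{-1}(V),F_Y^{-1}(V))$; both are fine, the paper's being slightly more self-contained given its earlier formulas. For the converse, the paper argues through $\mathsf{E}(U)=\mathsf{E}(L)$, the stochastic-order fact that $L\le_{st}U$ with equal means forces $L=_{st}U$ (Theorem 1.A.8 of Shaked and Shanthikumar), sandwiching to get $F_X=F_Y$, then deduces $\delta(u)=u$ from the integral formula and concludes $C=M$ from $C(u,v)\ge C(\min\{u,v\},\min\{u,v\})=\min\{u,v\}$ together with the Fr\'echet--Hoeffding upper bound, so it never needs Sklar's uniqueness clause; you instead observe that $\mathsf{E}|X-Y|=0$ forces $X=Y$ almost surely, compute the joint c.d.f.\ as $\min\{F_X(x),F_Y(y)\}$, and invoke uniqueness of the copula for continuous marginals (noting, as you should make explicit, that $F_Y=F_X$ is then also continuous). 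Your route is more elementary and even yields the stronger conclusion $X=Y$ a.s.\ without any stochastic-order lemma, and you correctly pinpoint that continuity is exactly what upgrades uniqueness from $\mathrm{Range}(F_X)\times\mathrm{Range}(F_Y)$ to all of $[0,1]^2$; the paper's diagonal-section argument buys a proof that \emph{any} copula compatible with $(X,Y)$ must equal $M$ directly from monotonicity and the FH bounds, staying within the copula identities it reuses throughout.
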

	\begin{proof}
		The property (i) is trivial. The properties (ii) and (iii) immediately follows from Eq.\ (\ref{eq:bivariateGMD}). 
		$\mathsf{GMD}(X,Y)\ge0$ also holds trivially from Eq.\ (\ref{eq:bivariateGMD}). If $X$ and $Y$ are i.d.\ and $C(u,v)=\min\{u,v\}$, for all $u,v\in[0,1]$, then $\mathsf{GMD}(X,Y)=0$ from Eq.\ (\ref{eq:bivariateGMD c.d.f.}). 
		Conversely, if $\mathsf{GMD}(X,Y)=0$, from Eq.\ (\ref{eq:mean difference}) one has $\mathsf{E}(U)=\mathsf{E}(L)$ and since $L\le_{st}U$, from Theorem 1.A.8.\ in \cite{Shaked:Shanthikumar}, it follows $\bar{F}_L=\bar{F}_U$. Therefore, since $L\le_{st}X\le_{st}U$ and $L\le_{st}Y\le_{st}U$ then $F_X=F_Y$. Under these assumptions and if $F_X$ is continuous, from Eq.\ (\ref{eq:bivariateGMD c.d.f.}) one has $\delta(u)=u$ for all $u\in[0,1]$. Therefore for all $u,v\in[0,1]$ 
		$$
		C(u,v)\ge C(\min\{u,v\},\min\{u,v\})=\delta(\min\{u,v\})=\min\{u,v\},
		$$
		and, by recalling $M$ in Eq.\ (\ref{eq:Fréchet-Hoeffding bounds}), finally the thesis holds.
	\end{proof}
	Similarly, from Eq.\ (\ref{eq:bivariateGindex}), it is easy to see that the bivariate Gini's index satisfies the following properties:
	\begin{description}
		\item{(i)} (law invariance) if $(X,Y)=_{st}(\tilde{X},\tilde{Y})$, then $\mathsf{G}(X,Y)=\mathsf{G}(\tilde{X},\tilde{Y})$;
		\item{(ii)} (translation) $\mathsf{G}(X+\lambda,Y+\lambda)=\frac{\mathsf{E}(X)+\mathsf{E}(Y)}{\mathsf{E}(X)+\mathsf{E}(Y)+2\lambda}\mathsf{G}(X,Y)$, for all $\lambda\in\mathbb{R}$;
		\item{(iii)} (homogeneity invariance) $\mathsf{G}(\lambda X,\lambda Y)=\mathsf{G}(X,Y)$, for all $\lambda\in\mathbb{R}^+$;
		\item{(iv)} (non-negativity) $\mathsf{G}(X,Y)\ge0$ for all $X$ and $Y$. 
		In addition $\mathsf{G}(X,Y)=0$ under the same conditions given in (iv) of Theorem \ref{th:properties GMD} for $\mathsf{GMD}(X,Y)$.
	\end{description}
	\par
	Aiming to compare different bivariate Gini's mean difference, below we provide some results under suitable assumptions on the marginal distributions and copula functions involved in Eqs.\ (\ref{eq:bivariateGMD s.f.}) and (\ref{eq:bivariateGMD c.d.f.}).
	First we show a dependence ordering for the bivariate Gini's mean difference and bivariate Gini's index through concordance order between two copula functions when we maintain the marginals. 
	\begin{proposition}\label{prop:concordance ordering}
		Let $(X,Y)$ and $(\tilde{X},\tilde{Y})$ be two random vectors with copulas $C_1$ and $C_2$, respectively. Suppose that $X=_{st}\tilde{X}$ and $Y=_{st}\tilde{Y}$, with c.d.f.'s $F_X$ and $F_Y$, respectively. If 
		$C_1\prec C_2$, then 
		$$
		\mathsf{GMD}(X,Y)\ge\mathsf{GMD}(\tilde{X},\tilde{Y}),\qquad\mathsf{G}(X,Y)\ge\mathsf{G}(\tilde{X},\tilde{Y}).
		$$
	\end{proposition}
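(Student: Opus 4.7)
The proof is essentially a direct application of the integral representation in Eq.\ (\ref{eq:bivariateGMD c.d.f.}) combined with the pointwise inequality supplied by the concordance order. My plan is the following.

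First, I would note that since $X =_{st} \tilde X$ and $Y =_{st} \tilde Y$, the marginal c.d.f.'s of the two vectors agree: $F_X = F_{\tilde X}$ and $F_Y = F_{\tilde Y}$. Therefore, applying Eq.\ (\ref{eq:bivariateGMD c.d.f.}) to each vector yields
\begin{equation*}
\mathsf{GMD}(X,Y) - \mathsf{GMD}(\tilde X,\tilde Y) = 2 \int_0^{+\infty} \bigl\{ C_2(F_X(t),F_Y(t)) - C_1(F_X(t),F_Y(t)) \bigr\}\, \mathrm{d}t.
\end{equation*}
By the hypothesis $C_1 \prec C_2$, the integrand is pointwise nonnegative, which gives $\mathsf{GMD}(X,Y) \ge \mathsf{GMD}(\tilde X,\tilde Y)$.

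For the second inequality, I would observe that $X =_{st} \tilde X$ and $Y =_{st} \tilde Y$ force $\mathsf{E}(X) = \mathsf{E}(\tilde X)$ and $\mathsf{E}(Y) = \mathsf{E}(\tilde Y)$, so the denominators $\mathsf{E}(X)+\mathsf{E}(Y)$ and $\mathsf{E}(\tilde X)+\mathsf{E}(\tilde Y)$ in Eq.\ (\ref{eq:bivariateGindex}) coincide and are strictly positive (as we are in the random lifetime framework with finite positive means). Dividing the first inequality through by this common positive constant yields $\mathsf{G}(X,Y) \ge \mathsf{G}(\tilde X, \tilde Y)$.

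There is really no substantive obstacle: the work is done by Eq.\ (\ref{eq:bivariateGMD c.d.f.}), which cleanly isolates the copula $C$ from the marginals in the integrand with a negative coefficient $-2$. The only minor point to watch is to make sure that one uses the c.d.f.\ representation (where the copula enters with coefficient $-2$) rather than mixing it with the survival copula formula in Eq.\ (\ref{eq:bivariateGMD s.f.}); in principle, one could instead argue via $\widehat{C}_1 \prec \widehat{C}_2$ (equivalent to $C_1 \prec C_2$ by Eq.\ (\ref{eq:copula and survival copula})) applied to Eq.\ (\ref{eq:bivariateGMD s.f.}), and this would give the same conclusion.
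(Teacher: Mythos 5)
Your proposal is correct and follows essentially the same route as the paper: both apply the c.d.f.\ representation in Eq.\ (\ref{eq:bivariateGMD c.d.f.}) with the common marginals to express the difference of the two mean differences as $2\int_0^{+\infty}\{C_2(F_X(t),F_Y(t))-C_1(F_X(t),F_Y(t))\}\,\mathrm{d}t$, conclude nonnegativity from $C_1\prec C_2$, and then pass to the Gini's indices via the equal (positive) denominators in Eq.\ (\ref{eq:bivariateGindex}). No discrepancies to note.
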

	\begin{proof}
		Under these assumptions, recalling Eq.\ (\ref{eq:bivariateGMD c.d.f.}), one has
		\begin{equation}\label{eq:proof prop concordance ordering}
			\mathsf{GMD}(X,Y)-\mathsf{GMD}(\tilde{X},\tilde{Y})=2\int_{0}^{+\infty}\left\{C_2\left(F_X(t),F_Y(t)\right)-C_1\left(F_X(t),F_Y(t)\right)\right\}{\rm d}t. 	
		\end{equation}
		Hence, recalling also Eq.\ (\ref{eq:bivariateGindex}), the thesis follows from $C_1\prec C_2$.  
	\end{proof}
	\par
	Therefore if we fix the marginals and the dependence increases in the concordance order, then $\mathsf{GMD}(X,Y)$ and $\mathsf{G}(X,Y)$ decrease, that is, these dispersions indices decrease when the positive dependence increases. When all the marginals are i.d.\ we have the following result.
	\begin{proposition}\label{prop:concordance ordering diagonal section}
		Let $(X,Y)$ and $(\tilde{X},\tilde{Y})$ be two random vectors with diagonal sections $\delta_1$ and $\delta_2$, respectively. Suppose that all the marginals are i.d.\ with common c.d.f.\ $F$.
		If $\delta_1\le\delta_2$, then 
		$$
		\mathsf{GMD}(X,Y)\ge\mathsf{GMD}(\tilde{X},\tilde{Y}),\qquad\mathsf{G}(X,Y)\ge\mathsf{G}(\tilde{X},\tilde{Y}),
		$$
		with equality reached when $\delta_1=\delta_2$.
	\end{proposition}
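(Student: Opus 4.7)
The plan is to reduce both sides of the claimed inequality to a one-dimensional integral involving only the diagonal section and the common marginal, then read off the inequality from the assumption $\delta_1\le\delta_2$.

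First I would exploit the identically distributed assumption. Since $X, Y, \tilde X, \tilde Y$ all share the c.d.f.\ $F$, evaluating the copula on the diagonal yields $C_1(F(t),F(t))=\delta_1(F(t))$ and $C_2(F(t),F(t))=\delta_2(F(t))$. Plugging this into the representation \eqref{eq:bivariateGMD c.d.f.} gives
\begin{equation*}
\mathsf{GMD}(X,Y)=2\int_{0}^{+\infty}\bigl\{F(t)-\delta_1(F(t))\bigr\}\,{\rm d}t,\qquad \mathsf{GMD}(\tilde X,\tilde Y)=2\int_{0}^{+\infty}\bigl\{F(t)-\delta_2(F(t))\bigr\}\,{\rm d}t.
\end{equation*}
Subtracting yields
\begin{equation*}
\mathsf{GMD}(X,Y)-\mathsf{GMD}(\tilde X,\tilde Y)=2\int_{0}^{+\infty}\bigl\{\delta_2(F(t))-\delta_1(F(t))\bigr\}\,{\rm d}t,
\end{equation*}
which is nonnegative because $\delta_1\le\delta_2$ pointwise on $[0,1]$ by hypothesis.

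For the Gini's index inequality I would simply observe that the denominator $\mathsf{E}(X)+\mathsf{E}(Y)$ in \eqref{eq:bivariateGindex} depends only on the marginals; since $X=_{st}\tilde X$ and $Y=_{st}\tilde Y$ (both having c.d.f.\ $F$), the denominator equals $2\mathsf{E}(X)=\mathsf{E}(\tilde X)+\mathsf{E}(\tilde Y)$ in both cases. Dividing the previous display by this common positive constant preserves the inequality and delivers $\mathsf{G}(X,Y)\ge\mathsf{G}(\tilde X,\tilde Y)$. The equality case $\delta_1=\delta_2$ is then immediate since both integrals coincide.

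I do not anticipate a genuine obstacle here: the proof is essentially a direct substitution into the earlier integral formula, the real content being the observation that in the identically distributed setting the copula intervenes in $\mathsf{GMD}$ only through its diagonal section. The only subtlety worth flagging is that we do not assume the marginals are continuous, but \eqref{eq:bivariateGMD c.d.f.} is stated generally, and the diagonal identity $C(u,u)=\delta(u)$ holds by definition, so no extra hypothesis is needed.
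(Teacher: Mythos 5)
Your proof is correct and follows essentially the same route as the paper: the paper also plugs the identically distributed marginals into the representation in Eq.\ (\ref{eq:bivariateGMD c.d.f.}), obtaining the difference $\mathsf{GMD}(X,Y)-\mathsf{GMD}(\tilde X,\tilde Y)=2\int_0^{+\infty}\{\delta_2(F(t))-\delta_1(F(t))\}\,{\rm d}t$ (its Eq.\ (\ref{eq:proof prop concordance ordering}) specialized to $X=_{st}Y$), and concludes from $\delta_1\le\delta_2$, with the Gini's index case following by dividing by the common denominator.
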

	\begin{proof}
		The thesis immediately follows from Eq.\ (\ref{eq:proof prop concordance ordering}) when $X=_{st}Y$. 
	\end{proof}
	Making use of Corollary 8 in Ortega-Jim\'enez et al.\ \cite{Ortega-Jimenez:etal2} and Proposition \ref{prop:concordance ordering diagonal section}, we can go further and get the following result. 
	\begin{theorem}
		Let $(X,\tilde{X})$ and $(Y,\tilde{Y})$ be two random vectors with diagonal sections $\delta_1$ and $\delta_2$, respectively. Suppose that $X=_{st}\tilde{X}$ and $Y=_{st}\tilde{Y}$. If $Y\le_dX$ and $\delta_1\le\delta_2$, then 
		$$
		\mathsf{GMD}(X,\tilde{X})\ge\mathsf{GMD}(Y,\tilde{Y}).
		$$
	\end{theorem}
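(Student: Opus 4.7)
The plan is to prove the inequality by inserting an intermediate random vector and chaining two comparisons, one coming from Proposition \ref{prop:concordance ordering diagonal section} and the other from Corollary 8 in Ortega-Jim\'enez et al.\ \cite{Ortega-Jimenez:etal2}. Since both $(X,\tilde X)$ and $(Y,\tilde Y)$ are i.d.\ pairs, the key observation is that by Eq.\ (\ref{eq:bivariateGMD c.d.f.}) their bivariate Gini's mean differences reduce to integrals depending \emph{only} on the common marginal c.d.f.\ and the diagonal section:
\begin{equation*}
\mathsf{GMD}(X,\tilde X)=2\int_{0}^{+\infty}\bigl\{F_X(t)-\delta_1(F_X(t))\bigr\}{\rm d}t,\qquad \mathsf{GMD}(Y,\tilde Y)=2\int_{0}^{+\infty}\bigl\{F_Y(t)-\delta_2(F_Y(t))\bigr\}{\rm d}t.
\end{equation*}

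First I would introduce an auxiliary random vector $(Y^*,\tilde Y^*)$ with common marginal c.d.f.\ $F_Y$ but with diagonal section $\delta_1$; such a vector exists by taking the copula of $(X,\tilde X)$ and coupling it with the marginals $F_Y,F_Y$. Proposition \ref{prop:concordance ordering diagonal section} applied to $(Y^*,\tilde Y^*)$ and $(Y,\tilde Y)$, which share the common marginal $F_Y$ and have diagonal sections $\delta_1\le\delta_2$, immediately yields
\begin{equation*}
\mathsf{GMD}(Y^*,\tilde Y^*)\ge\mathsf{GMD}(Y,\tilde Y).
\end{equation*}

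Next I would compare $(X,\tilde X)$ with $(Y^*,\tilde Y^*)$: these two i.d.\ vectors share the same diagonal section $\delta_1$, but have marginal distributions $F_X$ and $F_Y$, respectively, with $Y\le_d X$ by hypothesis. This is precisely the setting of Corollary 8 in \cite{Ortega-Jimenez:etal2}, which delivers $\mathsf{GMD}(X,\tilde X)\ge\mathsf{GMD}(Y^*,\tilde Y^*)$. Chaining the two inequalities gives the thesis. If one wants to argue the second step by hand rather than quoting the corollary, the change of variables $u=F_X(t)$ and $u=F_Y(t)$ transforms the two integrals into $\int_0^1 2(u-\delta_1(u))/f_X(F_X^{-1}(u))\,{\rm d}u$ and the analogous expression with $f_Y,F_Y$; then the integrand $u-\delta_1(u)\ge 0$ combined with $Y\le_d X$ (equivalently $f_X(F_X^{-1}(u))\le f_Y(F_Y^{-1}(u))$) yields the desired comparison pointwise.

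The main obstacle is ensuring that the intermediate construction is legitimate and that the quoted Corollary 8 is stated in a form compatible with our setup (i.e.\ dispersive comparison of marginals with a fixed dependence structure). Once the two building blocks are correctly invoked, the proof is a short chain of inequalities.
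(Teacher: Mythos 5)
Your proposal is correct and matches the paper's own proof essentially step for step: the paper likewise introduces an intermediate vector $(Y_1,\tilde Y_1)$ with marginals distributed as $Y$ and diagonal section $\delta_1$, then chains Corollary 8 of Ortega-Jim\'enez et al.\ (for the dispersive comparison with fixed dependence) with Proposition \ref{prop:concordance ordering diagonal section} (for the diagonal-section comparison with fixed marginals). Your additional hand-made change-of-variables argument for the dispersive step is a fine supplement but not needed.
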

	\begin{proof}
		Consider $(Y_1,\tilde{Y}_1)$ with diagonal section $\delta_1$ and suppose that $Y=_{st}Y_1=_{st}\tilde{Y}_1$. 
		The thesis follows since from Corollary 8 in Ortega-Jim\'enez et al.\ \cite{Ortega-Jimenez:etal2} and Proposition \ref{prop:concordance ordering diagonal section} one has, respectively,  
		$$
		\mathsf{GMD}(X,\tilde{X})\ge\mathsf{GMD}(Y_1,\tilde{Y}_1)\ge\mathsf{GMD}(Y,\tilde{Y}).
		$$
	\end{proof}
	\par
	In the following result we compare the bivariate Gini's mean difference in the cases of homogeneous and heterogeneous marginal distributions.
	\begin{proposition}\label{prop:GMD homogeneous and heterogeneous}
		Let $(X,Y)$ be a random vector with survival copula $\widehat C$ and strictly increasing diagonal section $\widehat \delta$. Let $(Z,\tilde{Z})$ be another random vector with the same survival copula and common marginal s.f.\ $\bar{G}$. If
		\begin{equation}\label{eq:GMD homogeneous and heterogeneous}
			\widehat \delta^{-1} (\widehat C(\bar{F}_X(t),\bar{F}_Y(t)))\leq \bar{G}(t) \leq \frac{\bar{F}_X(t)+\bar{F}_Y(t)} 2
		\end{equation}
		holds for all $t$, then $\mathsf{GMD}(X,Y)\geq\mathsf{GMD}(Z,\tilde{Z})$.
	\end{proposition}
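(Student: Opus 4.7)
The plan is to compute both Gini's mean differences via the survival-copula representation in Eq.\ (\ref{eq:bivariateGMD s.f.}) and show that the integrand of the difference is pointwise nonnegative, splitting it into two pieces, each controlled by one side of the hypothesis (\ref{eq:GMD homogeneous and heterogeneous}).

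First I would write
\begin{equation*}
\mathsf{GMD}(X,Y)=\int_{0}^{+\infty}\bigl\{\bar F_X(t)+\bar F_Y(t)-2\widehat C(\bar F_X(t),\bar F_Y(t))\bigr\}{\rm d}t
\end{equation*}
and, since $(Z,\tilde Z)$ shares the survival copula $\widehat C$ and has common marginal $\bar G$,
\begin{equation*}
\mathsf{GMD}(Z,\tilde Z)=2\int_{0}^{+\infty}\bigl\{\bar G(t)-\widehat\delta(\bar G(t))\bigr\}{\rm d}t,
\end{equation*}
using $\widehat C(\bar G(t),\bar G(t))=\widehat\delta(\bar G(t))$. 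Subtracting, the difference $\mathsf{GMD}(X,Y)-\mathsf{GMD}(Z,\tilde Z)$ becomes
\begin{equation*}
\int_{0}^{+\infty}\bigl\{[\bar F_X(t)+\bar F_Y(t)-2\bar G(t)]+2[\widehat\delta(\bar G(t))-\widehat C(\bar F_X(t),\bar F_Y(t))]\bigr\}{\rm d}t.
\end{equation*}

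Next I would show both bracketed terms are nonnegative pointwise in $t$. The right inequality of (\ref{eq:GMD homogeneous and heterogeneous}), $\bar G(t)\le (\bar F_X(t)+\bar F_Y(t))/2$, handles the first bracket directly. For the second bracket, since $\widehat\delta$ is strictly increasing, applying $\widehat\delta$ to the left inequality $\widehat\delta^{-1}(\widehat C(\bar F_X(t),\bar F_Y(t)))\le \bar G(t)$ yields $\widehat C(\bar F_X(t),\bar F_Y(t))\le \widehat\delta(\bar G(t))$. Integrating, the difference is nonnegative, which gives the claimed inequality.

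There is no real obstacle here; the only technical point worth a line in the proof is that $\widehat\delta^{-1}$ is well defined because $\widehat\delta$ was assumed strictly increasing, and that the direction of the inequality is preserved when $\widehat\delta$ is applied to both sides. The argument also transparently shows the geometric content of (\ref{eq:GMD homogeneous and heterogeneous}): the heterogeneous marginals straddle the homogeneous one in a way that simultaneously widens both the ``$U$'' part (top bracket) and the ``$-L$'' part (bottom bracket) of $\mathsf{GMD}$.
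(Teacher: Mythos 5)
Your proof is correct and follows essentially the same route as the paper: both apply $\widehat\delta$ to the left inequality to get $\widehat C(\bar F_X(t),\bar F_Y(t))\le\widehat\delta(\bar G(t))$, use the right inequality for the marginal averages, and compare the two integrands pointwise via Eq.\ (\ref{eq:bivariateGMD s.f.}). The only cosmetic difference is that you subtract and split the integrand into two brackets, whereas the paper bounds the single integrand directly; the content is identical.
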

	\begin{proof}
		From Eqs.\ (\ref{eq:bivariateGMD s.f.}) and (\ref{eq:GMD homogeneous and heterogeneous}), as $\widehat C(\bar{F}_X(t),\bar{F}_Y(t)))\leq \widehat \delta (\bar{G}(t))$, one has 
		\begin{align*}
			\mathsf{GMD}(X,Y)
			&=2\int_0^\infty \left( \frac{\bar{F}_X(t)+\bar{F}_Y(t)} 2-\widehat C(\bar{F}_X(t),\bar{F}_Y(t))  \right){\rm d}t \\
			&\geq 2\int_0^\infty \left( \bar{G}(t)-\widehat \delta (\bar{G}(t)) \right){\rm d}t \\
			&=\mathsf{GMD}(Z,\tilde{Z})
		\end{align*}
		and the proof is complete.
	\end{proof}
	\par
	Note that both $\bar{G}_1(t):=\widehat \delta^{-1} ( \widehat C(\bar{F}_X(t),\bar{F}_X(t)))$ and $\bar{G}_2(t):=(\bar{F}_X(t)+\bar{F}_Y(t))/ 2$ are s.f.'s obtained as distortions of $\bar{F}_X$ and $\bar{F}_Y$. We also remark that the bivariate Gini's mean difference decreases when the dependence does not change and the marginal distributions are homogeneous, that is $Z$ and $\tilde{Z}$ are more similar than the original $X$ and $Y$. For example, when $\widehat{C}=\Pi$, then we can apply Proposition \ref{prop:GMD homogeneous and heterogeneous} if $\bar{G}$ is between the geometric and the arithmetic means of $\bar{F}_X$ and $\bar{F}_Y$, since the condition expressed by Eq.\ (\ref{eq:GMD homogeneous and heterogeneous}) becomes 
	$$
	\bar{G}_1(t)=\sqrt{\bar{F}_X(t)\bar{F}_Y(t)}\leq \bar{G}(t)\leq \bar{G}_2(t)=\frac{\bar{F}_X(t)+\bar{F}_Y(t)} 2.
	$$
	Note that we can also apply Proposition \ref{prop:GMD homogeneous and heterogeneous} to both $\bar{G}_1$ and $\bar{G}_2$. A similar result can be stated for the c.d.f.\ configuration from Eq.\ (\ref{eq:bivariateGMD c.d.f.}), as follows.
	\begin{proposition}
		Let $(X,Y)$ be a random vector with copula $C$ and strictly increasing diagonal section $ \delta$. Let $(Z,\tilde{Z})$ be another random vector with the same copula and common marginal c.d.f.\ $G$. If
		$$ \delta^{-1} ( C(F_X(t), F_Y(t)))\leq  G(t) \leq \frac{F_X(t)+F_Y(t)} 2$$
		holds for all $t$, then $\mathsf{GMD}(X,Y)\geq\mathsf{GMD}(Z,\tilde{Z})$.
	\end{proposition}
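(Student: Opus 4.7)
The plan is to mirror the proof of the preceding Proposition \ref{prop:GMD homogeneous and heterogeneous} (the survival-function version), but starting from the c.d.f.\ representation of the bivariate Gini's mean difference given in Eq.\ (\ref{eq:bivariateGMD c.d.f.}) instead of the s.f.\ representation in Eq.\ (\ref{eq:bivariateGMD s.f.}). So the first step is to rewrite $\mathsf{GMD}(X,Y)$ as
$$\mathsf{GMD}(X,Y)=2\int_0^\infty\left(\frac{F_X(t)+F_Y(t)}{2}-C(F_X(t),F_Y(t))\right){\rm d}t,$$
and to observe that the analogous formula for $(Z,\tilde{Z})$, whose common marginal c.d.f.\ is $G$ and whose copula is the same $C$, reduces by the diagonal-section identity $C(G(t),G(t))=\delta(G(t))$ to
$$\mathsf{GMD}(Z,\tilde{Z})=2\int_0^\infty\bigl(G(t)-\delta(G(t))\bigr){\rm d}t.$$

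Next, I would use the two halves of the hypothesis separately. The upper bound $G(t)\le(F_X(t)+F_Y(t))/2$ controls the first term of the integrand. The lower bound $\delta^{-1}(C(F_X(t),F_Y(t)))\le G(t)$, together with strict monotonicity of $\delta$, gives $C(F_X(t),F_Y(t))\le\delta(G(t))$, and hence $-C(F_X(t),F_Y(t))\ge-\delta(G(t))$; this controls the second term. Adding the two pointwise inequalities, the integrand for $\mathsf{GMD}(X,Y)$ dominates the integrand for $\mathsf{GMD}(Z,\tilde{Z})$, and integration over $[0,\infty)$ preserves the inequality.

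There is no real obstacle here: the argument is step-for-step the same as that of Proposition \ref{prop:GMD homogeneous and heterogeneous}, with c.d.f.'s in place of s.f.'s, so the proof can be written in just a few lines by invoking Eq.\ (\ref{eq:bivariateGMD c.d.f.}) and the strict monotonicity of $\delta$. The only point worth flagging explicitly is the role of strict monotonicity, which is needed to invert the inequality $\delta^{-1}(C(F_X,F_Y))\le G$ into $C(F_X,F_Y)\le\delta(G)$; once this is noted, the chain of estimates in the display closes the proof.
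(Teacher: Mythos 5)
Your proof is correct and is exactly the argument the paper intends: it states this c.d.f.\ version as the direct analogue of Proposition \ref{prop:GMD homogeneous and heterogeneous}, whose proof you reproduce verbatim with $F_X,F_Y,G,C,\delta$ in place of $\bar F_X,\bar F_Y,\bar G,\widehat C,\widehat\delta$, using Eq.\ (\ref{eq:bivariateGMD c.d.f.}) instead of Eq.\ (\ref{eq:bivariateGMD s.f.}). The two pointwise estimates (monotonicity of $\delta$ turning the left inequality into $C(F_X,F_Y)\le\delta(G)$, and the right inequality bounding the mean term) are precisely what is needed, so nothing is missing.
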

	
	We can also obtain a result in order to compare heterogeneous cases. It can be stated as follows. A similar result holds for the c.d.f.\ configuration given in Eq.\ (\ref{eq:bivariateGMD c.d.f.}). 
	\begin{proposition}\label{prop:GMD heterogeneous}
		Let $(X,Y)$ be a random vector with survival copula $\widehat C$. Let $(\tilde X,\tilde Y)$ be another random vector with the same survival copula. If $\mathsf{E}(X)+\mathsf{E}(Y)\ge\mathsf{E}(\tilde X)+\mathsf{E}(\tilde Y)$ and 
		$$
		\widehat C(\bar{F}_X(t),\bar{F}_Y(t))\leq \widehat C(\bar{F}_{\tilde X}(t),\bar{F}_{\tilde Y}(t))
		$$ 
		holds for all $t$, then $\mathsf{GMD}(X,Y)\geq\mathsf{GMD}(\tilde X,\tilde Y)$.
	\end{proposition}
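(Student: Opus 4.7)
The plan is to reduce everything to the integral representation for the bivariate Gini's mean difference given in Eq.\ (\ref{eq:bivariateGMD s.f.}) and then exploit the two hypotheses additively. First I would write
$$\mathsf{GMD}(X,Y)=\int_0^{+\infty}\bar F_X(t)\,{\rm d}t+\int_0^{+\infty}\bar F_Y(t)\,{\rm d}t-2\int_0^{+\infty}\widehat C(\bar F_X(t),\bar F_Y(t))\,{\rm d}t,$$
and use the standard identity $\int_0^{+\infty}\bar F_X(t)\,{\rm d}t=\mathsf{E}(X)$ (and likewise for $Y$), so that
$$\mathsf{GMD}(X,Y)=\mathsf{E}(X)+\mathsf{E}(Y)-2\int_0^{+\infty}\widehat C(\bar F_X(t),\bar F_Y(t))\,{\rm d}t,$$
with the analogous formula for $(\tilde X,\tilde Y)$ using the \emph{same} survival copula $\widehat C$.

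Next I would subtract the two expressions to get
$$\mathsf{GMD}(X,Y)-\mathsf{GMD}(\tilde X,\tilde Y)=\bigl[\mathsf{E}(X)+\mathsf{E}(Y)-\mathsf{E}(\tilde X)-\mathsf{E}(\tilde Y)\bigr]+2\int_0^{+\infty}\bigl[\widehat C(\bar F_{\tilde X}(t),\bar F_{\tilde Y}(t))-\widehat C(\bar F_X(t),\bar F_Y(t))\bigr]{\rm d}t.$$
Now the first bracket is nonnegative by the hypothesis $\mathsf{E}(X)+\mathsf{E}(Y)\ge\mathsf{E}(\tilde X)+\mathsf{E}(\tilde Y)$, and the integrand in the second term is pointwise nonnegative by the hypothesis $\widehat C(\bar F_X(t),\bar F_Y(t))\le\widehat C(\bar F_{\tilde X}(t),\bar F_{\tilde Y}(t))$ for all $t\ge 0$. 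Adding the two nonnegative contributions yields $\mathsf{GMD}(X,Y)\ge\mathsf{GMD}(\tilde X,\tilde Y)$, which is the claim.

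There is essentially no obstacle: the result is a direct algebraic consequence of the integral representation (\ref{eq:bivariateGMD s.f.}) once the means are separated from the copula term. The only subtlety worth mentioning in the write-up is that the two hypotheses are perfectly aligned in sign with this decomposition, one controlling the ``marginal'' contribution $\mathsf{E}(X)+\mathsf{E}(Y)$ and the other controlling the ``joint'' contribution through $\widehat C$. This also clarifies why the same survival copula must be assumed, so that the copula-dependent term can be compared pointwise without any further ordering hypothesis.
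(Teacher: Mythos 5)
Your proof is correct and is exactly the argument the paper intends: the proposition is stated without an explicit proof because it follows directly from the representation in Eq.\ (\ref{eq:bivariateGMD s.f.}), in the same way as the proofs of Propositions \ref{prop:GMD homogeneous and heterogeneous} and \ref{prop:Schur-concave}. Your decomposition into the marginal term and the copula term, each nonnegative by one hypothesis (and finite, since $\widehat C(\bar F_X,\bar F_Y)\le\bar F_X$ ensures all integrals converge under the paper's standing assumption of finite means), matches this intended route.
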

	We remark that, under the assumptions of Proposition \ref{prop:GMD heterogeneous}, when $\mathsf{E}(X)+\mathsf{E}(Y)=\mathsf{E}(\tilde X)+\mathsf{E}(\tilde Y)$ one also has $\mathsf{G}(X,Y)\geq\mathsf{G}(\tilde X,\tilde Y)$.
	Moreover, recalling Eq.\ (\ref{eq:Schur}), if the survival copula is Schur-concave we can go further and get the following result. 	
	\begin{proposition}\label{prop:Schur-concave}
		Let $(X,Y)$ be a random vector with survival copula $\widehat C$. Let $(\tilde X,\tilde Y)$ be another random vector with the same survival copula. If $\widehat C$ is Schur-concave, $\bar{F}_X+\bar{F}_Y=\bar{F}_{\tilde X}+\bar{F}_{\tilde Y}$ and $\min(\bar{F}_X,\bar{F}_Y)\leq\min(\bar{F}_{\tilde X},\bar{F}_{\tilde Y})$, then
		$$
		\mathsf{GMD}(X,Y)\geq\mathsf{GMD}(\tilde X,\tilde Y),\qquad\mathsf{G}(X,Y)\geq\mathsf{G}(\tilde X,\tilde Y).
		$$
	\end{proposition}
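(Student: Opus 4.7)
The plan is to reduce the inequality for $\mathsf{GMD}$ to a pointwise comparison of $\widehat C$ evaluated along two curves in $[0,1]^2$ that share the same sum coordinate at every $t$, and then derive the inequality for $\mathsf{G}$ for free from the fact that the sum-of-marginals assumption forces the two denominators in the Gini index to coincide.

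First I would write $\mathsf{GMD}(X,Y)$ and $\mathsf{GMD}(\tilde X,\tilde Y)$ using the survival-copula representation in Eq.~(\ref{eq:bivariateGMD s.f.}). Subtracting and using the hypothesis $\bar{F}_X(t)+\bar{F}_Y(t)=\bar{F}_{\tilde X}(t)+\bar{F}_{\tilde Y}(t)$ for every $t\ge 0$, the marginal part cancels and one obtains the identity
\begin{equation*}
\mathsf{GMD}(X,Y)-\mathsf{GMD}(\tilde X,\tilde Y)=2\int_0^{+\infty}\bigl\{\widehat C\bigl(\bar{F}_{\tilde X}(t),\bar{F}_{\tilde Y}(t)\bigr)-\widehat C\bigl(\bar{F}_X(t),\bar{F}_Y(t)\bigr)\bigr\}\,{\rm d}t.
\end{equation*}
So the goal reduces to showing that the integrand is nonnegative for every $t$.

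Next I would invoke the Schur-concavity of $\widehat C$ as stated in Eq.~(\ref{eq:Schur}), applied pointwise at each $t$ with the choice $x=\bar{F}_X(t)$, $y=\bar{F}_Y(t)$, $\tilde x=\bar{F}_{\tilde X}(t)$, $\tilde y=\bar{F}_{\tilde Y}(t)$. The two structural assumptions $\bar{F}_X+\bar{F}_Y=\bar{F}_{\tilde X}+\bar{F}_{\tilde Y}$ and $\min(\bar{F}_X,\bar{F}_Y)\le\min(\bar{F}_{\tilde X},\bar{F}_{\tilde Y})$ are precisely the two requirements $x+y=\tilde x+\tilde y$ and $\min(x,y)\le\min(\tilde x,\tilde y)$ that appear in the definition of Schur-concavity, so $\widehat C(\bar{F}_X(t),\bar{F}_Y(t))\le\widehat C(\bar{F}_{\tilde X}(t),\bar{F}_{\tilde Y}(t))$ holds for all $t\ge 0$. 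Integrating this pointwise inequality proves the first claim $\mathsf{GMD}(X,Y)\ge\mathsf{GMD}(\tilde X,\tilde Y)$.

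For the Gini-index statement I would observe that the sum-of-survival-functions hypothesis implies, after integration, that $\mathsf{E}(X)+\mathsf{E}(Y)=\mathsf{E}(\tilde X)+\mathsf{E}(\tilde Y)$, since $\mathsf{E}(X)=\int_0^{+\infty}\bar{F}_X(t)\,{\rm d}t$. Thus the two denominators in Eq.~(\ref{eq:bivariateGindex}) agree, and the $\mathsf{GMD}$ inequality transfers verbatim to $\mathsf{G}$. I do not expect a serious obstacle here: the proof is essentially a one-line pointwise application of the Schur-concavity definition combined with the integral representation, and the index statement is a book-keeping remark about equal denominators; the only care needed is to verify that the min-condition and sum-condition in the hypothesis line up exactly with the definition of Schur-concavity in Eq.~(\ref{eq:Schur}).
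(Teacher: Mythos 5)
Your proposal is correct and follows essentially the same route as the paper: both apply the Schur-concavity definition in Eq.~(\ref{eq:Schur}) pointwise at each $t$ to get $\widehat C(\bar{F}_X(t),\bar{F}_Y(t))\le\widehat C(\bar{F}_{\tilde X}(t),\bar{F}_{\tilde Y}(t))$, integrate via the representation in Eq.~(\ref{eq:bivariateGMD s.f.}), and deduce the index inequality from the equality $\mathsf{E}(X)+\mathsf{E}(Y)=\mathsf{E}(\tilde X)+\mathsf{E}(\tilde Y)$ forced by the sum condition. Your explicit cancellation of the marginal terms in the difference of the two integrals is just a slightly more detailed write-up of the same argument.
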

	\begin{proof}
		Under the stated assumptions, if $\widehat C$ is Schur-concave, then 
		$$\widehat C(\bar{F}_X(t),\bar{F}_Y(t))\leq \widehat C(\bar{F}_{\tilde X}(t),\bar{F}_{\tilde Y}(t)).$$
		Therefore 
		\begin{align*}
			\mathsf{GMD}(X,Y)
			&=2\int_0^\infty \left( \frac{\bar{F}_X(t)+\bar{F}_Y(t)} 2-\widehat C(\bar{F}_X(t),\bar{F}_Y(t))  \right){\rm d}t \\
			&\geq 2\int_0^\infty \left( \frac{\bar{F}_{\tilde X}(t)+\bar{F}_{\tilde Y}(t)} 2-\widehat C(\bar{F}_{\tilde X}(t),\bar{F}_{\tilde Y}(t))\right){\rm d}t \\
			&=\mathsf{GMD}(\tilde X,\tilde Y)
		\end{align*}
		and the proof is complete. The result for the bivariate Gini's index holds since $\mathsf{E}(X)+\mathsf{E}(Y)=\mathsf{E}(\tilde X)+\mathsf{E}(\tilde Y)$.
	\end{proof}
	\par
	We remark that, for $\widehat C$ Schur-concave, if $\bar{F}_X+\bar{F}_Y=\bar{F}_{\tilde X}+\bar{F}_{\tilde Y}$ and
	$X\le_{st}\tilde{X}\le_{st}\tilde{Y}\le_{st}Y$ hold, then 
	$$
	\mathsf{GMD}(X,Y)\geq\mathsf{GMD}(\tilde X,\tilde Y),\qquad\mathsf{G}(X,Y)\geq\mathsf{G}(\tilde X,\tilde Y).
	$$ 
	Thus, for a fixed copula, if one increases the dispersions between the random lifetimes involved in the vector, then the bivariate Gini's indices increase.
	\par
	The result included in Proposition \ref{prop:Schur-concave} is related with Parrondo's paradox in reliability stated in Di Crescenzo \cite{DiCrescenzo:2007} and in Navarro and Spizzichino \cite{Navarro:Spizzichino}. In this sense we know that many copulas are Schur-concave (for example all the Archimedean copulas). Hence the preceding result holds for many dependence models. If we just compare with the homogeneous case, we can relax the condition of the copula to the weakly Schur-concavity defined in Eq.\ (\ref{eq:weakly Schur}). 
	\begin{proposition}\label{prop:weakly Schur}
		Let $(X,Y)$ be a random vector with survival copula $\widehat C$. Let $(Z,\tilde{Z})$ be another random vector with the same survival copula and common marginal s.f.\ $\bar{G}$. If $\widehat C$ is weakly Schur-concave (respectively Schur-convex) and $\bar{G}= (\bar{F}_X+\bar{F}_Y)/2$, then 
		$$
		\mathsf{GMD}(X,Y)\geq\mathsf{GMD}(Z,\tilde{Z})\,(\leq),\qquad\mathsf{G}(X,Y)\geq\mathsf{G}(Z,\tilde{Z})\,(\leq).
		$$
	\end{proposition}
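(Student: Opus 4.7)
The plan is to follow exactly the skeleton of the proof of Proposition \ref{prop:Schur-concave}, but replacing full Schur-concavity of $\widehat C$ with its weak version, which turns out to be sufficient precisely because the comparison vector $(Z,\tilde Z)$ has equal marginals set to the arithmetic mean of $\bar F_X$ and $\bar F_Y$.

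First I would apply the weak Schur-concavity condition (\ref{eq:weakly Schur}) to the copula $\widehat C$ (viewed as a bivariate s.f.\ on $[0,1]^2$) with $u=\bar F_X(t)$, $v=\bar F_Y(t)$ and $w=(u+v)/2$. By the hypothesis $\bar G=(\bar F_X+\bar F_Y)/2$ one has $w=\bar G(t)$, so this yields the pointwise inequality
$$\widehat C(\bar F_X(t),\bar F_Y(t))\leq \widehat C(\bar G(t),\bar G(t))$$
for every $t\geq 0$, with the reverse inequality in the weakly Schur-convex case.

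Next I would invoke the s.f.\ representation in Eq.\ (\ref{eq:bivariateGMD s.f.}) for both vectors and, exploiting the identity $\bar F_X(t)+\bar F_Y(t)=2\bar G(t)$ (which makes the linear parts of the two integrands coincide), obtain
$$\mathsf{GMD}(X,Y)-\mathsf{GMD}(Z,\tilde Z)=2\int_0^{+\infty}\left\{\widehat C(\bar G(t),\bar G(t))-\widehat C(\bar F_X(t),\bar F_Y(t))\right\}{\rm d}t,$$
which is non-negative under weak Schur-concavity and non-positive under weak Schur-convexity. This gives the desired ordering for the bivariate Gini's mean difference.

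Finally, for the bivariate Gini's index I would integrate the same identity $\bar F_X+\bar F_Y=2\bar G$ over $[0,+\infty)$ to get $\mathsf{E}(X)+\mathsf{E}(Y)=\mathsf{E}(Z)+\mathsf{E}(\tilde Z)$, so the common denominator in Eq.\ (\ref{eq:bivariateGindex}) cancels and the inequality for $\mathsf{GMD}$ transfers verbatim to $\mathsf{G}$. No genuine obstacle is expected; the only point meriting slight care is the legitimacy of applying the weak Schur condition (stated in Section \ref{sec:background} for bivariate survival functions) to the copula $\widehat C$ itself, which is justified by viewing $\widehat C$ as a joint s.f.\ on the unit square.
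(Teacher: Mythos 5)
Your proposal is correct and follows essentially the same route as the paper's own proof: the pointwise inequality $\widehat C(\bar F_X(t),\bar F_Y(t))\leq \widehat C(\bar G(t),\bar G(t))$ from weak Schur-concavity, the s.f.\ representation in Eq.\ (\ref{eq:bivariateGMD s.f.}) with the identity $\bar F_X+\bar F_Y=2\bar G$, and the observation $\mathsf{E}(X)+\mathsf{E}(Y)=2\mathsf{E}(Z)$ to transfer the inequality to the Gini's index.
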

	\begin{proof}
		Under the stated assumptions, if $\widehat C$ is weakly Schur-concave, then  
		$$
		\widehat C(\bar{F}_X(t),\bar{F}_Y(t))\leq \widehat C(\bar{G}(t),\bar{G}(t)).
		$$
		Therefore 
		\begin{align*}
			\mathsf{GMD}(X,Y)
			&=2\int_0^\infty \left( \frac{\bar{F}_X(t)+\bar{F}_Y(t)} 2-\widehat C(\bar{F}_X(t),\bar{F}_Y(t))  \right){\rm d}t \\
			&\geq 2\int_0^\infty \left( \bar{G}(t)-\widehat C(\bar{G}(t),\bar{G}(t))\right){\rm d}t \\
			&=\mathsf{GMD}(Z,\tilde{Z})
		\end{align*}
		and the thesis follows. The proof for weakly Schur-convex copulas is similar. The result for the bivariate Gini's index holds since $\mathsf{E}(X)+\mathsf{E}(Y)=2\mathsf{E}(Z)$.
	\end{proof}
	Recalling Eq.\ (\ref{eq:Schur-constant}), in the following we consider the Schur-constant assumption for $\widehat{C}$. 
	\begin{proposition}
		Let $\widehat{C}$ be the survival copula of $(X,Y)$. If $\widehat{C}$ is Schur-constant, then
		\begin{equation}\label{eq:prop Schur-constant}
			\mathsf{GMD}(X,Y)=\mathsf{E}(X),\qquad\mathsf{G}(X,Y)=\frac12.
		\end{equation}
	\end{proposition}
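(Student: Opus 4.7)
The plan is to translate the Schur-constant hypothesis on $\widehat{C}$ into an explicit distributional description of $L=\min\{X,Y\}$, and then substitute into the mean representations (\ref{eq:bivariateGMD mean}) and (\ref{eq:bivariateGindex}).

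First I would unpack the hypothesis via (\ref{eq:Schur-constant}): that $\widehat{C}$ is Schur-constant means that there exists a univariate survival function $\bar{G}$ such that the joint survival function of $(X,Y)$ takes the form $\bar{F}_{(X,Y)}(x,y)=\bar{G}(x+y)$ for all $x,y\ge 0$. Evaluating at $y=0$ and at $x=0$ gives $\bar{F}_X=\bar{F}_Y=\bar{G}$, so $X$ and $Y$ are identically distributed, as the paper already notes right after (\ref{eq:Schur-constant}). In particular $\mathsf{E}(X)=\mathsf{E}(Y)$.

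Next, I would compute the survival function of $L$ by specializing the joint s.f. to the diagonal:
$$\bar{F}_L(t)=\mathsf{Pr}(X>t,Y>t)=\bar{F}_{(X,Y)}(t,t)=\bar{G}(2t),\qquad t\ge 0.$$
Thus $L=_{st}X/2$, and the change of variables $u=2t$ in $\int_0^{+\infty}\bar{G}(2t)\,{\rm d}t$ yields $\mathsf{E}(L)=\mathsf{E}(X)/2$.

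Finally, I would feed these two facts into the formulas already at our disposal. From (\ref{eq:bivariateGMD mean}),
$$\mathsf{GMD}(X,Y)=\mathsf{E}(X)+\mathsf{E}(Y)-2\mathsf{E}(L)=2\mathsf{E}(X)-\mathsf{E}(X)=\mathsf{E}(X),$$
and from (\ref{eq:bivariateGindex}),
$$\mathsf{G}(X,Y)=\frac{\mathsf{GMD}(X,Y)}{\mathsf{E}(X)+\mathsf{E}(Y)}=\frac{\mathsf{E}(X)}{2\mathsf{E}(X)}=\frac12,$$
which gives (\ref{eq:prop Schur-constant}). There is no real obstacle: the whole content is the observation that Schur-constancy forces $\bar{F}_L(t)=\bar{G}(2t)$, after which the two equalities are immediate substitutions.
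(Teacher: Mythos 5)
Your proof is correct, but it takes a genuinely different route from the paper. The paper's proof is a one-liner that invokes Theorem 4 of Nelsen (2005): under the Schur-constant assumption one has the distributional identity $|X-Y|=_{st}X$, and then both equalities in (\ref{eq:prop Schur-constant}) follow by taking expectations in (\ref{eq:bivariateGMD}) and (\ref{eq:bivariateGindex}). You instead avoid the external citation entirely: writing the joint s.f.\ as $\bar{G}(x+y)$ per (\ref{eq:Schur-constant}), you read off $\bar{F}_X=\bar{F}_Y=\bar{G}$, get $\bar{F}_L(t)=\bar{G}(2t)$ so $\mathsf{E}(L)=\mathsf{E}(X)/2$, and substitute into (\ref{eq:bivariateGMD mean}). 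Your argument is self-contained and elementary (it only uses the nonnegativity of the lifetimes so that $\mathsf{E}(L)=\int_0^{+\infty}\bar{F}_L(t)\,{\rm d}t$, and $0<\mathsf{E}(X)<\infty$ to divide), whereas the paper's route buys the stronger fact that $|X-Y|$ and $X$ agree in law, not merely in mean, at the cost of relying on Nelsen's result. Note also that both arguments implicitly read the hypothesis the same way: Schur-constancy is imposed on the joint survival function $\widehat{C}(\bar{F}_X(x),\bar{F}_Y(y))$ in the sense of (\ref{eq:Schur-constant}) (the standard Schur-constant model), not on $\widehat{C}$ as a function on $[0,1]^2$; your explicit unpacking of this is consistent with the paper's intent and with the cited theorem.
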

	\begin{proof}
		If $\widehat{C}$ is Schur-constant, then $|X-Y|=_{st}X$, as shown in Theorem 4 of Nelsen \cite{Nelsen:2005}. Therefore, the thesis follows recalling Eqs.\ (\ref{eq:bivariateGMD}) and (\ref{eq:bivariateGindex}). 
	\end{proof}
	\par
	Clearly, from the left-hand-side of Eq.\ (\ref{eq:prop Schur-constant}), for $(X,Y)\sim\widehat{C}_1$ and $(\tilde{X},\tilde{Y})\sim\widehat{C}_2$ with Schur-constant copulas, if $X\le_{st}\tilde{X}$, then 
	$\mathsf{GMD}(X,Y)\le\mathsf{GMD}(\tilde{X},\tilde{Y})$.
	\qquad
	\\ 
	\par 
	We conclude this section by extending the bivariate Gini's indices to the multivariate case. Given an $n$-dimensional random vector $(X_1,\dots,X_n)$, we use the following notation 
	$$
	X_{1:n}=\min\{X_1,\dots,X_n\}\qquad\text{and}\qquad X_{n:n}=\max\{X_1,\dots,X_n\}.
	$$
	\begin{definition}\label{def:multivariateGMD}
		Let ${\bf X}=(X_1,\dots,X_n)$ be an $n$-dimensional random vector. The multivariate Gini's mean difference of ${\bf X}$ is defined as 
		\begin{equation}\label{eq:multivariateGMD}
			\mathsf{GMD}({\bf X})=\mathsf{GMD}(X_{1:n},X_{n:n}).
		\end{equation}
	\end{definition}
	Moreover, by denoting with $R\coloneqq X_{n:n}-X_{1:n}\ge0$, the range of the random vector ${\bf X}$, one has  $\mathsf{GMD}({\bf X})=\mathsf{E}(X_{n:n})-\mathsf{E}(X_{1:n})=\mathsf{E}(R)$. Clearly $\mathsf{GMD}({\bf X})=0$ if and only if $X_1\equiv\dots\equiv X_n$. In this sense the multivariate Gini's mean difference can be used to measure the dispersion of the random vector ${\bf X}$.
	\par
	According to Eq.\ (\ref{eq:bivariateGindex}), one can define a multivariate ratio based on Eq.\ (\ref{eq:multivariateGMD}), namely {\em multivariate Gini's index} of ${\bf X}$, as follows 
	\begin{equation*}\label{eq:multivariateGindex}
		\mathsf{G}({\bf X})\coloneqq\frac{\mathsf{GMD}({\bf X})}{\mathsf{E}(X_{1:n})+\mathsf{E}(X_{n:n})}\in[0,1].
	\end{equation*}
	\par
	If $X_1,\dots,X_n$ are i.d., with common s.f.\ $\bar{F}$, having copula $C$ and survival copula $\widehat{C}$, Eq.\ (\ref{eq:multivariateGMD}) becomes
	\begin{equation}\label{eq:multivariateGMD i.d.}
		\mathsf{GMD}({\bf X})=\int_{0}^{+\infty}\left\{1-\delta\left(F(t)\right)-{\hat{\delta}}\left(\bar{F}(t)\right)\right\}{\rm d}t,
	\end{equation}
	where $\delta$ and $\hat{\delta}$ are the diagonal sections of $C$ and $\widehat{C}$, respectively. Moreover, if $X_1,\dots,X_n$ are i.i.d., having common s.f.\ $\bar{F}$, then Eq.\ (\ref{eq:multivariateGMD i.d.}) reduces to
	\begin{equation}\label{eq:multivariateGMD i.i.d.}
		\mathsf{GMD}({\bf X})=\int_{0}^{+\infty}\left\{1-\left(F(t)\right)^n-\left(\bar{F}(t)\right)^n\right\}{\rm d}t.
	\end{equation}
	\begin{remark}
		If $X_i\sim\mathcal{U}(0,1)$, for $i\in\{1,\dots,n\}$, and they are independent, then Eq.\ (\ref{eq:multivariateGMD i.i.d.}) becomes 
		$$
		\mathsf{GMD}({\bf X})=\int_{0}^{1}\left\{1-t^n-(1-t)^n\right\}{\rm d}t=\frac{n-1}{n+1}.
		$$
		Hence, $\mathsf{GMD}({\bf X})\rightarrow 1$ as $n\rightarrow+\infty$. For instance, when $X_1,\dots,X_n$ describe the lifetimes of $n$ components in a system, then $\mathsf{GMD}({\bf X})$ goes to $1$ when the number of the components increases significantly. In addition, since in this case $\mathsf{E}(X_{n:n})=n/(n+1)$ and $\mathsf{E}(X_{1:n})=1/(n+1)$, it follows that $\mathsf{GMD}({\bf X})=\mathsf{G}({\bf X})$. As examples, for $n=2,3,4,5$ one has $\mathsf{G}({\bf X})=\frac13,\frac12,\frac35,\frac23$, respectively. 
	\end{remark} 
	\begin{remark}
		If $X_i\sim{\rm Exp}(1)$, for $i\in\{1,\dots,n\}$, and they are independent, by setting $t=e^{-x}$, Eq.\ (\ref{eq:multivariateGMD i.i.d.}) becomes 
		$$
		\mathsf{GMD}({\bf X})=\int_{0}^{1}\frac{\left\{1-(1-t)^n-t^n\right\}}{t}{\rm d}t=H(n)-\frac{1}{n},
		$$
		where $H(n)=\sum_{k=1}^{n}\frac1k$ is the truncation of the harmonic series. 
		One has $\mathsf{GMD}({\bf X})\rightarrow+\infty$ as $n\rightarrow+\infty$.
		In addition, it follows 
		$$
		\mathsf{G}({\bf X})=\frac{nH(n)-1}{nH(n)+1},
		$$
		that goes to $1$ as $n$ increases. As examples, for $n=2,3,4,5$ one has $\mathsf{G}({\bf X})=\frac12,\frac9{13},\frac{11}{14},\frac{125}{149}$, respectively. 
	\end{remark}

	\section{Bounds and inequalities}\label{sec:bounds and inequalities}
	This section is devoted to show bounds and inequalities for the bivariate Gini's indices defined in Section \ref{sec:bivariateGMD}. Bounds for the univariate Gini's indices given in Eqs.\ (\ref{eq:GMD}) and (\ref{eq:Gindex}) can be found, for instance, in Yin et al.\ \cite{Yin:etal}. In the next result we use the Jensen's inequality in Eq.\ (\ref{eq:Jensen's inequality}) aiming to provide lower bounds for Eqs.\ (\ref{eq:bivariateGMD}) and (\ref{eq:bivariateGindex}).
	\begin{proposition}
		Let $(X,Y)$ be a random vector. Then 
		\begin{equation}\label{eq:Jensen's bound}
			\mathsf{GMD}(X,Y)\ge|\mathsf{E}(X)-\mathsf{E}(Y)|,\qquad	\mathsf{G}(X,Y)\ge\frac{|\mathsf{E}(X)-\mathsf{E}(Y)|}{\mathsf{E}(X)+\mathsf{E}(Y)}.
		\end{equation}
	\end{proposition}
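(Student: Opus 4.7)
The plan is to recognize both inequalities as immediate consequences of Jensen's inequality applied to the convex function $\psi(z)=|z|$. First I would set $Z=X-Y$ and invoke Eq.\ (\ref{eq:Jensen's inequality}) with $\psi(z)=|z|$ (which is convex on $\mathbb{R}$). By linearity of expectation $\mathsf{E}(Z)=\mathsf{E}(X)-\mathsf{E}(Y)$, so the Jensen bound $\psi(\mathsf{E}(Z))\le \mathsf{E}(\psi(Z))$ reads
$$
|\mathsf{E}(X)-\mathsf{E}(Y)|\le \mathsf{E}|X-Y|=\mathsf{GMD}(X,Y),
$$
using Definition \ref{def:bivariateGMD} (Eq.\ (\ref{eq:bivariateGMD})) on the right-hand side. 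This gives the first inequality in Eq.\ (\ref{eq:Jensen's bound}).

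For the second inequality I would simply divide both sides of the first one by the positive quantity $\mathsf{E}(X)+\mathsf{E}(Y)$, which is strictly positive by the standing assumption that the random lifetimes have positive (finite) means. By the definition of the bivariate Gini's index in Eq.\ (\ref{eq:bivariateGindex}), the left-hand side becomes $\mathsf{G}(X,Y)$, yielding
$$
\mathsf{G}(X,Y)\ge \frac{|\mathsf{E}(X)-\mathsf{E}(Y)|}{\mathsf{E}(X)+\mathsf{E}(Y)}.
$$

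There is essentially no obstacle here: the only subtle point to verify is the positivity of $\mathsf{E}(X)+\mathsf{E}(Y)$ to ensure the normalization in Eq.\ (\ref{eq:bivariateGindex}) is well-defined and the inequality direction is preserved upon division, which is ensured by the assumption $0<\mathsf{E}(X),\mathsf{E}(Y)<\infty$ made in Section \ref{sec:background}.
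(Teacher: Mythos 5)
Your proof is correct and follows exactly the paper's argument: Jensen's inequality with $\psi$ the absolute value function and $Z=X-Y$, then division by $\mathsf{E}(X)+\mathsf{E}(Y)$ to obtain the statement for $\mathsf{G}(X,Y)$. Your additional remark on the positivity of the means is consistent with the standing assumptions of the paper.
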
 
	\begin{proof}
		The thesis immediately follows by recalling Eqs.\ (\ref{eq:bivariateGMD}) and (\ref{eq:bivariateGindex}) and making use of Eq.\ (\ref{eq:Jensen's inequality}) with $\psi$ equal to the absolute value function and $Z=X-Y$.
	\end{proof}
	\par
	Clearly, if $\mathsf{E}(X)\le\mathsf{E}(Y)$, then from Eq.\ (\ref{eq:Jensen's bound}) 
	\begin{equation}\label{eq:bound mean differences}
		\mathsf{GMD}(X,Y)\ge\mathsf{E}(Y)-\mathsf{E}(X),\qquad	\mathsf{G}(X,Y)\ge\frac{\mathsf{E}(Y)-\mathsf{E}(X)}{\mathsf{E}(Y)+\mathsf{E}(X)}.
	\end{equation}
	\begin{remark}
		Let $X$	and $Y$ be exponentially distributed having s.f.'s $\bar{F}_X(t)=e^{-\lambda_1t}$ and $\bar{F}_Y(t)=e^{-\lambda_2t}$, for $t\ge 0$, respectively. If $\lambda_1>\lambda_2>0$, from Eq.\ (\ref{eq:bound mean differences}) one has
		$$
		\mathsf{GMD}(X,Y)\ge\frac{\lambda_1-\lambda_2}{\lambda_1\lambda_2},\qquad\mathsf{G}(X,Y)\ge\frac{\lambda_1-\lambda_2}{\lambda_1+\lambda_2}.
		$$
	\end{remark}
	\par	
	Making use of the Markov's inequality in Eq.\ (\ref{eq:Markov's inequality}), below we interpret $\mathsf{GMD}(X,Y)$ as a bound for the probability that $Y$ assumes values in a neighborhood of $X$. 
	\begin{proposition}\label{prop:Markov's bound}
		Let $(X,Y)$ be a random vector. For $a>0$, one has 
		\begin{equation}\label{eq:Markov's bound}
			\mathsf{Pr}\left(X-a<Y<X+a\right)\ge 1-\frac{\mathsf{GMD}(X,Y)}{a}.
		\end{equation}
	\end{proposition}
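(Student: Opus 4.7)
The plan is to recognize the statement as a direct consequence of Markov's inequality applied to the nonnegative random variable $Z=|X-Y|$. Since $\mathsf{GMD}(X,Y)=\mathsf{E}|X-Y|=\mathsf{E}(Z)$ by Definition \ref{def:bivariateGMD}, the quantity $\mathsf{GMD}(X,Y)/a$ is exactly the bound that the Markov inequality in Eq.\ (\ref{eq:Markov's inequality}) produces for $\mathsf{Pr}(Z\ge a)$.

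First I would verify that $Z=|X-Y|$ qualifies as a random lifetime (nonnegative random variable) so that Eq.\ (\ref{eq:Markov's inequality}) applies. Then, for $a>0$, apply Eq.\ (\ref{eq:Markov's inequality}) to $Z$ with threshold $a$ to obtain
$$\mathsf{Pr}(|X-Y|\ge a)\le\frac{\mathsf{E}|X-Y|}{a}=\frac{\mathsf{GMD}(X,Y)}{a}.$$
Taking complements yields $\mathsf{Pr}(|X-Y|<a)\ge 1-\mathsf{GMD}(X,Y)/a$.

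Finally, I would rewrite the event $\{|X-Y|<a\}$ as $\{-a<X-Y<a\}$, which, after rearranging, is exactly $\{X-a<Y<X+a\}$. Substituting this into the previous inequality gives Eq.\ (\ref{eq:Markov's bound}). There is no real obstacle here; the entire argument is one application of Markov's inequality followed by set-theoretic complementation, and the only minor care needed is to note that the inequality $|X-Y|\ge a$ versus $|X-Y|>a$ does not affect the bound (one can use either, since the Markov inequality in Eq.\ (\ref{eq:Markov's inequality}) is stated with $\ge$ and the complement $\{|X-Y|<a\}=\{-a<X-Y<a\}$ matches the open interval on the left-hand side of Eq.\ (\ref{eq:Markov's bound})).
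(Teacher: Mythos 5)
Your proposal is correct and follows exactly the paper's argument: apply Markov's inequality (Eq.\ (\ref{eq:Markov's inequality})) to $Z=|X-Y|$, recall that $\mathsf{E}(Z)=\mathsf{GMD}(X,Y)$ by Eq.\ (\ref{eq:bivariateGMD}), and take complements, noting $\{|X-Y|<a\}=\{X-a<Y<X+a\}$. The extra care you take about $\ge$ versus $>$ and the set identity is fine but adds nothing beyond what the paper's one-line proof already relies on.
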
 
	\begin{proof}
		The thesis follows making use of Eq.\ (\ref{eq:Markov's inequality}) with $Z=|X-Y|$, and by recalling Eq.\ (\ref{eq:bivariateGMD}).
	\end{proof}
	\par
	In addition, one can use the Markov's inequality in Eq.\ (\ref{eq:Markov's inequality}) in order to compare the bivariate Gini's mean difference, for i.d.\ marginals, with the univariate one, as shown in the next result. 
	\begin{theorem}
		Let $(X,Y)$ be a random vector having i.d.\ marginals with mean $\mu$. One has 
		$$
		\mathsf{Pr}\left(\frac{X}{\mu}-2\mathsf{G}(X)<\frac{Y}{\mu}<\frac{X}{\mu}+2\mathsf{G}(X)\right)\ge 1-\frac{\mathsf{G}(X,Y)}{\mathsf{G}(X)}.
		$$
	\end{theorem}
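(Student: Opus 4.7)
The plan is to recognize this as a direct specialization of Proposition \ref{prop:Markov's bound} after rewriting both indices in terms of their un-normalized counterparts. Since the marginals are identically distributed with common mean $\mu>0$, we have $\mathsf{E}(X)+\mathsf{E}(Y)=2\mu$, so
$$
\mathsf{G}(X,Y)=\frac{\mathsf{GMD}(X,Y)}{2\mu},\qquad \mathsf{G}(X)=\frac{\mathsf{GMD}(X)}{2\mu},
$$
which immediately gives $2\mu\,\mathsf{G}(X)=\mathsf{GMD}(X)$ and $\mathsf{G}(X,Y)/\mathsf{G}(X)=\mathsf{GMD}(X,Y)/\mathsf{GMD}(X)$.

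Next I would invoke Proposition \ref{prop:Markov's bound} with the specific choice $a=\mathsf{GMD}(X)$; this choice is legitimate provided $\mathsf{GMD}(X)>0$ (equivalently $\mathsf{G}(X)>0$), which is implicitly required for the right-hand side of the claim to make sense. Substituting yields
$$
\mathsf{Pr}\bigl(X-\mathsf{GMD}(X)<Y<X+\mathsf{GMD}(X)\bigr)\ge 1-\frac{\mathsf{GMD}(X,Y)}{\mathsf{GMD}(X)}.
$$

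Finally, I would translate the event inside the probability. Dividing the chain of strict inequalities by $\mu>0$ (which preserves the inequalities) transforms the event into
$$
\frac{X}{\mu}-\frac{\mathsf{GMD}(X)}{\mu}<\frac{Y}{\mu}<\frac{X}{\mu}+\frac{\mathsf{GMD}(X)}{\mu},
$$
and replacing $\mathsf{GMD}(X)/\mu$ by $2\mathsf{G}(X)$ together with $\mathsf{GMD}(X,Y)/\mathsf{GMD}(X)$ by $\mathsf{G}(X,Y)/\mathsf{G}(X)$ gives precisely the claimed inequality. There is no real obstacle here; the only care needed is observing the equivalence of ratios under the identically-distributed assumption and making the $a=\mathsf{GMD}(X)$ substitution in the previously proved Markov-type bound.
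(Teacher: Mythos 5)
Your proof is correct and follows essentially the same route as the paper's: establish $\mathsf{GMD}(X,Y)/\mathsf{GMD}(X)=\mathsf{G}(X,Y)/\mathsf{G}(X)$ from the identically distributed marginals, apply Proposition \ref{prop:Markov's bound} with $a=\mathsf{GMD}(X)$, and divide the event by $\mu$. Your explicit remark that $\mathsf{GMD}(X)>0$ is needed for the ratio to make sense is a small but welcome addition the paper leaves implicit.
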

	\begin{proof}
		Since $X=_{st}Y$, recalling Eqs.\ (\ref{eq:GMD}), (\ref{eq:Gindex}), (\ref{eq:bivariateGMD}) and (\ref{eq:bivariateGindex}), one has 
		$$
		\frac{\mathsf{GMD}(X,Y)}{\mathsf{GMD}(X)}=\frac{\mathsf{G}(X,Y)}{\mathsf{G}(X)}. 
		$$
		Therefore, the thesis follows from Eq.\ (\ref{eq:Markov's bound}) for $a=\mathsf{GMD}(X)$ and dividing by $\mu$ in the inequalities stated in the left-hand-side. 
	\end{proof}
	\par
	In the next result, making use of the FH bounds in Eq.\ (\ref{eq:Fréchet-Hoeffding bounds}) and Proposition \ref{prop:concordance ordering}, we obtain attainable bounds for the bivariate Gini's mean difference. The bounds for the bivariate Gini's index can be obtained dividing by $\mathsf{E}(X)+\mathsf{E}(Y)$.
	\begin{proposition}\label{prop:Fréchet-Hoeffding bounds bivariate GMD}
		Let $(X,Y)$ be a random vector with marginal s.f.'s $\bar{F}_X$ and $\bar{F}_Y$. Then
		\begin{equation}\label{eq:Fréchet-Hoeffding bounds bivariate GMD}
			\int_{0}^{+\infty}|\bar{F}_X(t)-\bar{F}_Y(t)|{\rm d}t\le\mathsf{GMD}(X,Y)\le\int_{0}^{+\infty}\left\{1-|\bar{F}_X(t)+\bar{F}_Y(t)-1|\right\}{\rm d}t,
		\end{equation}
		with bounds reached when the copula of $(X,Y)$ coincides with the FH bounds in Eq.\ (\ref{eq:Fréchet-Hoeffding bounds}).
	\end{proposition}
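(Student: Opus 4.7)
The plan is to start from the survival–copula representation of the bivariate Gini's mean difference given in Eq.\ (\ref{eq:bivariateGMD s.f.}),
$$
\mathsf{GMD}(X,Y)=\int_{0}^{+\infty}\bigl\{\bar{F}_X(t)+\bar{F}_Y(t)-2\widehat{C}(\bar{F}_X(t),\bar{F}_Y(t))\bigr\}{\rm d}t,
$$
and then to invoke the Fréchet–Hoeffding inequalities in Eq.\ (\ref{eq:Fréchet-Hoeffding bounds}), namely $W(u,v)\le\widehat{C}(u,v)\le M(u,v)$ for all $u,v\in[0,1]$. Since the integrand is pointwise decreasing in $\widehat{C}$, replacing $\widehat{C}$ by $M$ yields a lower bound for $\mathsf{GMD}(X,Y)$, and replacing $\widehat{C}$ by $W$ yields an upper bound.

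The next step is to simplify the two resulting integrands to match the right-hand sides of Eq.\ (\ref{eq:Fréchet-Hoeffding bounds bivariate GMD}). For $a,b\in[0,1]$, the elementary identities
$$
a+b-2\min\{a,b\}=|a-b|,\qquad a+b-2\max\{a+b-1,0\}=1-|a+b-1|
$$
(the second one checked by splitting into the cases $a+b\ge 1$ and $a+b<1$) give exactly the two integrands appearing in the stated bounds when $(a,b)=(\bar{F}_X(t),\bar{F}_Y(t))$.

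Finally, attainability follows because $W$ and $M$ are themselves copulas: taking $(X,Y)$ with survival copula $\widehat{C}=M$ (the comonotonic coupling of the given marginals) makes the lower bound an equality, and taking $\widehat{C}=W$ (the countermonotonic coupling) does the same for the upper bound; alternatively, one can appeal directly to Proposition \ref{prop:concordance ordering} since the marginals are fixed and only the copula varies. I do not foresee a real obstacle here — the argument is essentially a pointwise application of Eq.\ (\ref{eq:Fréchet-Hoeffding bounds}) followed by the two algebraic identities above, and the only care needed is in verifying the second identity by the two-case split and in noting that $W$ and $M$ are admissible survival copulas so the bounds are sharp.
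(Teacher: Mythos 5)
Your proposal is correct and follows essentially the same route as the paper: both start from the survival-copula representation in Eq.\ (\ref{eq:bivariateGMD s.f.}), apply the Fréchet--Hoeffding bounds pointwise (upper bound $M$ giving the lower bound on $\mathsf{GMD}$, lower bound $W$ the upper bound), and simplify via the identities $a+b-2\min\{a,b\}=|a-b|$ and $a+b-2\max\{a+b-1,0\}=1-|a+b-1|$. Your explicit remark on attainability (that $M$ and $W$ are themselves admissible copulas, under which the survival copula equals $M$ and $W$ respectively) is a harmless addition consistent with the statement.
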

	\begin{proof}
		By recalling Eq.\ (\ref{eq:bivariateGMD s.f.}), from the FH upper bound one has
		$$
		\begin{aligned}
			\mathsf{GMD}(X,Y)&\ge\int_{0}^{+\infty}\left\{\bar{F}_X(t)+\bar{F}_Y(t)-2\min\{\bar{F}_X(t),\bar{F}_Y(t)\}\right\}{\rm d}t\\
			&=\int_{0}^{+\infty}|\bar{F}_X(t)-\bar{F}_Y(t)|{\rm d}t,\\
		\end{aligned}
		$$
		where the last equality follows from $|x-y|=x+y-2\min\{x,y\}$. Similarly, from the FH lower bound we set
		$$
		\begin{aligned}
			\mathsf{GMD}(X,Y)&\le\int_{0}^{+\infty}\left\{\bar{F}_X(t)+\bar{F}_Y(t)-2\max\{\bar{F}_X(t)+\bar{F}_Y(t)-1,0\}\right\}{\rm d}t\\
			&=\int_{0}^{+\infty}\left\{1-|\bar{F}_X(t)+\bar{F}_Y(t)-1|\right\}{\rm d}t,\\
		\end{aligned}
		$$
		where the last equality follows from $2\max\{x,y\}=|x-y|+x+y$. This conclude the proof.
	\end{proof}
	\par
	We remark that, if $X\le_{st}Y$, then the lower bound in Eq.\ (\ref{eq:Fréchet-Hoeffding bounds bivariate GMD}) coincides with the one given in Eq.\ (\ref{eq:bound mean differences}), attained when $C=M$.
	\begin{remark}
		Suppose that $X\sim\mathcal{U}(0,a)$ and $Y\sim\mathcal{U}(0,b)$, with $0<a<b$. For any copula $C$, from Eq.\ (\ref{eq:Fréchet-Hoeffding bounds bivariate GMD}) with some calculations it follows 
		$$
		\frac{b^2-a^2}{2(a+b)}\le\mathsf{GMD}(X,Y)\le\frac{a^2+b^2}{2(a+b)},\qquad\frac{b^2-a^2}{(a+b)^2}\le\mathsf{G}(X,Y)\le\frac{a^2+b^2}{(a+b)^2}.
		$$
		Suppose now that $X\sim{\rm Exp}(1)$ and $Y\sim\mathcal{U}(0,1)$. Then from Eq.\ (\ref{eq:Fréchet-Hoeffding bounds bivariate GMD}) with some calculations one obtains 
		$$
		0.5\le\mathsf{GMD}(X,Y)\le0.955937,\qquad0.333333\le\mathsf{G}(X,Y)\le0.637291.
		$$		
	\end{remark}
	\par
	In the next result we provide an upper bound for the bivariate Gini's mean difference when $X=_{st}Y$, with particular attention to the cases in which the median of $X$ coincides with its mean value.
	\begin{proposition}\label{prop:Fréchet-Hoeffding upper bound ID bivariate GMD}
		Let $(X,Y)$ be a random vector with i.d.\ marginals having c.d.f.\ $F$, mean $\mu$ and median $m$. Then 
		\begin{equation}\label{eq:Fréchet-Hoeffding upper bound ID bivariate GMD}
			0\le\mathsf{GMD}(X,Y)\le2\left(\mu-m\right)+4\int_{0}^{m}F(t){\rm d}t.
		\end{equation}
		In particular, if $\mu=m$, then $\mathsf{GMD}(X,Y)\le4\int_{0}^{m}F(t){\rm d}t$.
	\end{proposition}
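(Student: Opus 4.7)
The lower bound is immediate from non-negativity of $\mathsf{GMD}$ (property (iv) of Theorem \ref{th:properties GMD}), so the substance lies entirely in the upper bound.

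My plan is to apply the Fréchet-Hoeffding upper bound from Proposition \ref{prop:Fréchet-Hoeffding bounds bivariate GMD} and then exploit the identically distributed assumption to collapse the integrand into a form involving only $F$. Since $X=_{st}Y$, writing $\bar{F}=1-F$ for the common survival function, we have $\bar{F}_X(t)+\bar{F}_Y(t)-1=1-2F(t)$, and hence
\[
|\bar{F}_X(t)+\bar{F}_Y(t)-1|=|1-2F(t)|.
\]
By the definition of the median $m$, this absolute value equals $1-2F(t)$ on $[0,m]$ and equals $2F(t)-1$ on $[m,+\infty)$.

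Next I would split the integral from Proposition \ref{prop:Fréchet-Hoeffding bounds bivariate GMD} at $t=m$, obtaining
\[
\mathsf{GMD}(X,Y)\le 2\int_{0}^{m}F(t)\,{\rm d}t+2\int_{m}^{+\infty}\bar{F}(t)\,{\rm d}t.
\]
To get the claimed right-hand side, I would rewrite the tail integral using
\[
\int_{m}^{+\infty}\bar{F}(t)\,{\rm d}t=\mu-\int_{0}^{m}\bar{F}(t)\,{\rm d}t=\mu-m+\int_{0}^{m}F(t)\,{\rm d}t,
\]
where the first equality uses $\int_{0}^{+\infty}\bar{F}(t)\,{\rm d}t=\mu$ and the second uses $\bar{F}=1-F$. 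Substituting back yields exactly $2(\mu-m)+4\int_{0}^{m}F(t)\,{\rm d}t$, and the special case $\mu=m$ drops the $2(\mu-m)$ term.

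There is no real obstacle here: the argument is a careful bookkeeping exercise combining the already-proved FH upper bound with the median-based splitting of $|1-2F(t)|$. The only point requiring a small care is the sign of $1-2F(t)$ at $t=m$ when $F$ has an atom there, but since the set $\{t:F(t)=1/2\}$ contributes zero to the integral the splitting is unaffected.
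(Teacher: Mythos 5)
Your proof is correct and follows essentially the same route as the paper: both start from the upper bound of Proposition \ref{prop:Fréchet-Hoeffding bounds bivariate GMD}, use the i.d.\ assumption to reduce the integrand to $1-|2\bar F(t)-1|$, split the integral at the median to get $2\int_0^m F(t)\,{\rm d}t+2\int_m^{+\infty}\bar F(t)\,{\rm d}t$, and then rewrite the tail integral (the paper phrases this as adding and subtracting $2\int_0^m\bar F(t)\,{\rm d}t$) to obtain $2(\mu-m)+4\int_0^m F(t)\,{\rm d}t$. No gaps; your remark about the behaviour at $t=m$ is a harmless extra precaution.
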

	\begin{proof}
		Under these assumptions, from Eq.\ (\ref{eq:Fréchet-Hoeffding bounds bivariate GMD}), with few calculations one has
		$$
		\begin{aligned}
			\mathsf{GMD}(X,Y)&\le\int_{0}^{+\infty}1-|2\bar{F}(t)-1|{\rm d}t\\
			&=2\int_{0}^{m}F(t){\rm d}t+2\int_{m}^{+\infty}\bar{F}(t){\rm d}t\\
			&=2\left(\mu-m\right)+4\int_{0}^{m}F(t){\rm d}t,
		\end{aligned}
		$$
		where the last equality is obtained by adding and subtracting $2\int_{0}^{m}\bar{F}(t){\rm d}t$. 
	\end{proof}
	\begin{remark}\label{rem:bound}
		If $X$ and $Y$ are uniformly distributed over $[0,b]$, with $b>0$, from Eq.\ (\ref{eq:Fréchet-Hoeffding upper bound ID bivariate GMD}), recalling that $\mu=m=1/2$, one has
		$$
		\mathsf{GMD}(X,Y)\le\frac{b}2,\qquad\mathsf{G}(X,Y)\le\frac12.
		$$
		If $X$ and $Y$ are exponentially distributed with mean $\mu>0$, from Eq.\ (\ref{eq:Fréchet-Hoeffding upper bound ID bivariate GMD}) it follows 
		$$
		\mathsf{GMD}(X,Y)\le2\mu\ln(2),\qquad\mathsf{G}(X,Y)\le\ln(2).
		$$
	\end{remark}
	\begin{remark}
		Making use of the Jensen's inequality in Eq.\ (\ref{eq:Jensen's inequality}), it is easy to see that the median and the mean satisfy the following well known relation
		$$
		|\mu-m|\le\sigma,
		$$
		where $\sigma$ is the standard deviation. Analogously, for an absolutely continuous random lifetime $X$, from Markov's inequality in Eq.\ (\ref{eq:Markov's inequality}) we get
		$$
		{\rm Pr}(X\ge m)=\frac12\le\frac{\mu}{m}
		$$
		and therefore $\mu\ge m/2$. Moreover, from Eq.\ (\ref{eq:Fréchet-Hoeffding upper bound ID bivariate GMD}) one can obtain another lower bound for the mean as 
		$$
		\mu\ge m-2\int_{0}^{m}F(t){\rm d}t.
		$$ 
	\end{remark}
	\par
	In the next result we give bounds for the bivariate Gini's mean difference when $X$ and $Y$ are stochastically ordered, by using the bivariate Gini's mean differences of their respectively homogeneous cases. The bounds for the bivariate Gini's index follow dividing by $\mathsf{E}(X)+\mathsf{E}(Y)$.
	\begin{proposition}
		Let $(X,Y),(X,\tilde{X})$ and $(Y,\tilde{Y})$ be three random vectors with the same survival copula $\widehat{C}$, where $X=_{st}\tilde{X}$ and $Y=_{st}\tilde{Y}$, respectively. If $X\le_{st}Y$, then
		\begin{equation}\label{eq:bounds bivariate GMD}
			\mathsf{GMD}(Y,\tilde{Y})-2\left(\mathsf{E}(Y)-\mathsf{E}(X)\right)\le\mathsf{GMD}(X,Y)\le\mathsf{GMD}(X,\tilde{X})+2\left(\mathsf{E}(Y)-\mathsf{E}(X)\right).
		\end{equation}
	\end{proposition}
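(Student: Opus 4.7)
The plan is to work directly from the survival–copula integral representation in Eq.\ (\ref{eq:bivariateGMD s.f.}), applied to all three pairs, which is possible because they share the same $\widehat{C}$. Writing $\bar{F}_X,\bar{F}_Y$ for the marginal s.f.'s, the three quantities become
\begin{align*}
\mathsf{GMD}(X,Y)&=\int_{0}^{+\infty}\!\left\{\bar{F}_X(t)+\bar{F}_Y(t)-2\widehat{C}(\bar{F}_X(t),\bar{F}_Y(t))\right\}{\rm d}t,\\
\mathsf{GMD}(X,\tilde{X})&=\int_{0}^{+\infty}\!\left\{2\bar{F}_X(t)-2\widehat{C}(\bar{F}_X(t),\bar{F}_X(t))\right\}{\rm d}t,\\
\mathsf{GMD}(Y,\tilde{Y})&=\int_{0}^{+\infty}\!\left\{2\bar{F}_Y(t)-2\widehat{C}(\bar{F}_Y(t),\bar{F}_Y(t))\right\}{\rm d}t.
\end{align*}

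For the upper bound I would take the difference $\mathsf{GMD}(X,Y)-\mathsf{GMD}(X,\tilde{X})$, which equals
$$\int_{0}^{+\infty}\!\left\{(\bar{F}_Y(t)-\bar{F}_X(t))+2\widehat{C}(\bar{F}_X(t),\bar{F}_X(t))-2\widehat{C}(\bar{F}_X(t),\bar{F}_Y(t))\right\}{\rm d}t.$$
Since $X\le_{st}Y$ gives $\bar{F}_X(t)\le\bar{F}_Y(t)$ and every copula is coordinatewise increasing, the last two terms in the braces are nonpositive. Thus the difference is bounded above by $\int_{0}^{+\infty}(\bar{F}_Y(t)-\bar{F}_X(t)){\rm d}t=\mathsf{E}(Y)-\mathsf{E}(X)\le 2(\mathsf{E}(Y)-\mathsf{E}(X))$, yielding the right inequality in Eq.\ (\ref{eq:bounds bivariate GMD}).

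The lower bound is strictly symmetric: for $\mathsf{GMD}(Y,\tilde{Y})-\mathsf{GMD}(X,Y)$ one obtains
$$\int_{0}^{+\infty}\!\left\{(\bar{F}_Y(t)-\bar{F}_X(t))+2\widehat{C}(\bar{F}_X(t),\bar{F}_Y(t))-2\widehat{C}(\bar{F}_Y(t),\bar{F}_Y(t))\right\}{\rm d}t,$$
and again monotonicity of $\widehat{C}$ in its first argument together with $\bar{F}_X\le\bar{F}_Y$ makes the copula terms nonpositive, so the difference is at most $\mathsf{E}(Y)-\mathsf{E}(X)\le 2(\mathsf{E}(Y)-\mathsf{E}(X))$. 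Rearranging gives $\mathsf{GMD}(Y,\tilde{Y})-2(\mathsf{E}(Y)-\mathsf{E}(X))\le\mathsf{GMD}(X,Y)$.

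There is no real obstacle here: the whole argument is two lines of the coordinatewise monotonicity of $\widehat{C}$ combined with the identity $\int_0^\infty(\bar{F}_Y-\bar{F}_X){\rm d}t=\mathsf{E}(Y)-\mathsf{E}(X)$. The only mildly interesting observation is that the proof actually yields the sharper bounds with factor $1$ instead of $2$; the statement is a slightly relaxed form, so the proof does not need to exploit any additional structure. An equivalent, perhaps cleaner, presentation would use $\mathsf{GMD}=\mathsf{E}(U)-\mathsf{E}(L)$ together with the conservation identity $\mathsf{E}(X)+\mathsf{E}(Y)=\mathsf{E}(U)+\mathsf{E}(L)$ to rewrite differences of GMDs in terms of the nonnegative increments of $\mathsf{E}(U)$ and $\mathsf{E}(L)$ that follow from the stochastic orderings $U_{XX}\le_{st}U_{XY}\le_{st}U_{YY}$ and $L_{XX}\le_{st}L_{XY}\le_{st}L_{YY}$.
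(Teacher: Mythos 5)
Your proof is correct and follows essentially the paper's own route: the integral representation you start from is just Eq.\ (\ref{eq:bivariateGMD mean}) written out via the common survival copula, and your monotonicity step $\widehat{C}(\bar{F}_X,\bar{F}_X)\le\widehat{C}(\bar{F}_X,\bar{F}_Y)\le\widehat{C}(\bar{F}_Y,\bar{F}_Y)$ is exactly the comparison $\mathsf{E}(\min\{X,\tilde{X}\})\le\mathsf{E}(\min\{X,Y\})\le\mathsf{E}(\min\{Y,\tilde{Y}\})$ on which the paper's proof rests. The only difference is bookkeeping: the paper additionally relaxes $\mathsf{E}(X)+\mathsf{E}(Y)\le 2\mathsf{E}(Y)$, which is where its factor $2$ comes from, whereas your computation correctly shows the inequalities already hold with $\mathsf{E}(Y)-\mathsf{E}(X)$ in place of $2\left(\mathsf{E}(Y)-\mathsf{E}(X)\right)$, a slightly sharper form of Eq.\ (\ref{eq:bounds bivariate GMD}).
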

	\begin{proof}
		We only show how to obtain the right-hand-side of Eq.\ (\ref{eq:bounds bivariate GMD}), since the left-hand-side similarly follows. Under the stated assumptions, since $\bar{F}_X\le_{st}\bar{F}_Y$ one has $\mathsf{E}(X)\le\mathsf{E}(Y)$ and $\mathsf{E}(\min\{X,\tilde{X}\})\le\mathsf{E}(\min\{X,Y\})$, and therefore
		$$
		\mathsf{GMD}(X,Y)\le2\mathsf{E}(Y)-2\mathsf{E}(\min\{X,\tilde{X}\}).
		$$
		The proof follows making use of Eq.\ (\ref{eq:bivariateGMD mean}) in $\mathsf{E}(\min\{X,\tilde{X}\})$.
	\end{proof}
	\par
	In the independent case, the Gini's indices in the bounds given in Eq.\ (\ref{eq:bounds bivariate GMD}) can be replaced with the respectively univariate indices defined in Eq.\ (\ref{eq:GMD}).

	\section{Efficiency Gini's indices in systems}\label{sec:effGMD}
	In this section we define particular versions of the bivariate Gini's indices, for the cases in which the random lifetimes $X$ and $Y$ in Eqs.\ (\ref{eq:bivariateGMD}) and (\ref{eq:bivariateGindex}) are suitable systems. First, we recall that a (binary) system with (binary) components of order $n$ is a Boolean structure function (map) $\Phi:\{0,1\}^n\rightarrow\{0,1\}$, where $\Phi\left(x_1,\dots,x_n\right)\in\{0,1\}$ represents the system's state that is completely determined by the components' states represented by $x_1,\dots,x_n\in\{0,1\}$.
	A {\em semi-coherent system} of order $n$ is a system $\Phi$ that is increasing and such that $\Phi\left(0,\dots,0\right)=0$ and $\Phi\left(1,\dots,1\right)=1$. In addition, we say that $\Phi$ is a {\em coherent system} of order $n$ if it is increasing and strictly increasing in each variable in at least a point. We remark that this function $\Phi$ can be extended to real numbers and then the random lifetime $T$ of the system can be written as $T=\Phi(X_1,\dots,X_n)$, where $X_1,\dots,X_n$ are the random lifetimes of the components. For more details about systems see Navarro \cite{Navarro:2022}.
	\par
	We now are ready to define efficiency versions of the new Gini's indices. The purpose is to measure how good is a given system $T$. 
	\begin{definition}\label{def:effGMD}
		Let $(X_1,\dots,X_n)$ be a random vector and let $T=\Phi(X_1,\dots,X_k)$ be the lifetime of any semi-coherent system, for $k\le n$. Consider $X_{1:n}=\min\{X_1,\dots,X_n\}$. The efficiency Gini's mean difference of order $n$ of $T$ is defined as
		\begin{equation}\label{eq:effGMD}
			\mathsf{GMD}_n(T)=\mathsf{GMD}(X_{1:n},T).
		\end{equation}
	\end{definition}
	\par
	Clearly, from Eq.\ (\ref{eq:bivariateGMD}), one has $\mathsf{GMD}_n(T)=\mathsf{E}(T)-\mathsf{E}(X_{1:n})$. 
	In addition, $\mathsf{GMD}_n(X_{1:n})=0$ and, recalling Eq.\ (\ref{eq:multivariateGMD}),  $\mathsf{GMD}_n(X_{n:n})=\mathsf{GMD}({\bf X})$.
	\par
	From Eq.\ (\ref{eq:bivariateGindex}), one can obtain a ratio based on Eq.\ (\ref{eq:effGMD}) as $\mathsf{GMD}_n(T)/(\mathsf{E}(T)+\mathsf{E}(X_{1:n}))$. However, in order to give more information about the efficiency of a system, we prefer to define the {\em efficiency Gini's index} as 
	\begin{equation}\label{eq:effGindex}
		\mathsf{G}_n(T)\coloneqq\frac{\mathsf{GMD}_n(T)}{\mathsf{GMD}_n(X_{n:n})},
	\end{equation} 
	where $\mathsf{G}_n(T)=0$ if and only if $T\equiv X_{1:n}$, and $\mathsf{G}_n(T)=1$ if and only if $T\equiv X_{n:n}$. Therefore, since Eq.\ (\ref{eq:effGMD}) represents the distance between the expected first component failure and the expected failure of $T$, one can measure the efficiency of any semi-coherent system by using Eq.\ (\ref{eq:effGindex}). Hence, the efficiency of a system in terms of its duration increases as $\mathsf{G}_n(T)$ increases. For predictions of $T$ from $X_{1:n}$ see Navarro et al.\ \cite{Navarro:etal1}. 
	\par
	Making use of Eq.\ (\ref{eq:Markov's bound}) we provide below a useful interpretation of the efficiency Gini's index. 
	\begin{proposition}
		Under the assumptions of Definition \ref{def:effGMD}, for $c>0$ one has
		\begin{equation}\label{eq:Markov's bound eff}
			\mathsf{Pr}\left(T<X_{1:n}+c\,\mathsf{GMD}_n(X_{n:n})\right)\ge1-\frac{\mathsf{G}_n(T)}c. 
		\end{equation}
	\end{proposition}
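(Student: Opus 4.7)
The plan is to apply Proposition \ref{prop:Markov's bound} directly with the substitutions $X\leftarrow X_{1:n}$, $Y\leftarrow T$, and $a\leftarrow c\,\mathsf{GMD}_n(X_{n:n})$, and then use the defining ratio \eqref{eq:effGindex} to rewrite the resulting bound.

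First I would observe that since $\Phi$ is a semi-coherent structure function (increasing, with $\Phi(0,\dots,0)=0$ and $\Phi(1,\dots,1)=1$), its induced lifetime satisfies $T\geq X_{1:k}$, and because $k\leq n$ we further have $X_{1:k}\geq X_{1:n}$. Hence $T\geq X_{1:n}$ almost surely, so $|T-X_{1:n}|=T-X_{1:n}$ and the event $\{X_{1:n}-a<T<X_{1:n}+a\}$ coincides with $\{T<X_{1:n}+a\}$ for every $a>0$.

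Next I would invoke Proposition \ref{prop:Markov's bound} applied to $(X_{1:n},T)$ with $a=c\,\mathsf{GMD}_n(X_{n:n})>0$: this yields
\[
\mathsf{Pr}\bigl(X_{1:n}-a<T<X_{1:n}+a\bigr)\ge 1-\frac{\mathsf{GMD}(X_{1:n},T)}{a}=1-\frac{\mathsf{GMD}_n(T)}{c\,\mathsf{GMD}_n(X_{n:n})},
\]
where the last equality uses Definition \ref{def:effGMD}. Combining this with the event equality from the previous step and recognizing $\mathsf{GMD}_n(T)/\mathsf{GMD}_n(X_{n:n})=\mathsf{G}_n(T)$ via \eqref{eq:effGindex} gives exactly \eqref{eq:Markov's bound eff}.

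There is essentially no obstacle here: the statement is a direct corollary of the earlier Markov-type bound once one exploits the monotonicity $T\geq X_{1:n}$ built into semi-coherence. The only mildly subtle point worth stating explicitly in the write-up is why the lower inequality $X_{1:n}-a<T$ is automatic, so that the two-sided probability collapses to the one-sided probability appearing in the claim; I would include one line noting this and otherwise let the substitution speak for itself.
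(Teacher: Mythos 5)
Your proposal is correct and follows essentially the same route as the paper, which proves the result by applying Proposition \ref{prop:Markov's bound} to $(X_{1:n},T)$ with $a=c\,\mathsf{GMD}_n(X_{n:n})$ and then invoking Eqs.\ (\ref{eq:effGMD}) and (\ref{eq:effGindex}). Your additional remark that $T\ge X_{1:n}$ (so the two-sided event reduces to the one-sided one) is a nice clarification, though not strictly needed, since the two-sided event is contained in $\{T<X_{1:n}+a\}$ and the bound transfers by monotonicity of probability.
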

	\begin{proof}
		The thesis immediately follows from Eq.\ (\ref{eq:Markov's bound}) when 
		$a=c\,\mathsf{GMD}_n(X_{n:n})$.
	\end{proof}
	\par
	Hence, Eq.\ (\ref{eq:Markov's bound eff}) leads to consider the efficiency Gini's index as a “control index" for the probability that the system fails after the first failure in an instant given by the difference between the expectations of the first and the last component failures.
	\par
	One could define a dual version of Eq.\ (\ref{eq:effGMD}) for any semi-coherent system $T$, referring to $X_{n:n}$ instead of $X_{1:n}$, as $\mathsf{GMD}(T,X_{n:n})$. In this case the corresponding Gini's index does not describe the same efficiency pointed out in Eq.\ (\ref{eq:effGindex}). Indeed, $\mathsf{GMD}(T,X_{n:n})$ measures the distance between the last component failure and the failure of the system, and therefore it represents a sort of ``inefficiency". 
	\par 
	We now provide a signature representation of the efficiency Gini's mean difference defined in Eq.\ (\ref{eq:effGMD}). Samaniego \cite{Samaniego:1985} (see also Samaniego \cite{Samaniego:2007}) introduced the first signature representation for the reliability of a coherent system. The interpretation is intuitively justified by the fact that a coherent system $T$ is going to fail with a component failure, and therefore the reliability of $T$ can viewed as a weighted sum of the ordered statistics (ordered component failures) $X_{i:n}$ for $i\in\{1,\dots,n\}$. Hence, if $T$ has i.i.d.\ component lifetimes $X_1,\dots,X_n$ with continuous s.f.\ $\bar{F}$, then 
	\begin{equation}\label{eq:s.f. signature}
		\bar{F}_T(t)=\sum_{i=1}^{n}s_i\bar{F}_{i:n}(t),
	\end{equation}
	for all $t$, where $s_1,\dots,s_n$ are nonnegative coefficients such that $\sum_{i=1}^{n}s_i=1$ and that do not depend on $\bar{F}$, and where $\bar{F}_{i:n}$ is the reliability function of $X_{i:n}$, for $i\in\{1,\dots,n\}$. The vector ${\bf s}=\left(s_1,\dots,s_n\right)$ is called the {\em signature} of the system, where $s_i=\mathsf{Pr}\left(T=X_{i:n}\right)$ for $i\in\{1,\dots,n\}$. Recalling that, for all $t$, the s.f.\ of $X_{i:n}$ is 
	$$
	\bar{F}_{i:n}(t)=\sum_{j=0}^{i-1}\binom{n}{j}\left[F(t)\right]^j\left[\bar{F}(t)\right]^{n-j},
	$$
	(see, for instance, Proposition 2.2 in \cite{Navarro:2022}, p.\ 30), from Eq.\ (\ref{eq:s.f. signature}), by interchanging the order of summations, it follows 
	\begin{equation}\label{eq:s.f. signature, order statistics}
		\bar{F}_T(t)=\sum_{i=1}^{n}S_{n-i+1}\binom{n}{i}\left[F(t)\right]^{n-i}\left[\bar{F}(t)\right]^{i},
	\end{equation}
	where $S_j=\sum_{i=j}^{n}s_i$ for $j\in\{1,\dots,n\}$, is the probability that the system works when exactly $n-j+1$ components work.
	\par 
	In Capaldo et al.\ \cite{Capaldo:etal1} the authors defined the cumulative information generating function of $X$ as
	\begin{equation}\label{eq:CIGF}
		G_X(\alpha,\beta)\coloneqq\int_{0}^{+\infty}\left[F(t)\right]^{\alpha}\left[\bar{F}(t)\right]^{\beta}\,{\rm{d}}t,
	\end{equation}
	for all $(\alpha,\beta)\in\mathbb{R}^2$ such that the right-hand-side of Eq.\ (\ref{eq:CIGF}) is finite. Recalling Eq.\ (\ref{eq:GMD}), it easy is to see that $\mathsf{GMD}(X)=2 G_X(1,1)$. See \cite{Capaldo:etal1} also for different generalizations of the univariate Gini's mean difference in Eq.\ (\ref{eq:GMD}). Eq.\ (\ref{eq:CIGF}) provides a unifying mathematical tool suitable to deal with cumulative entropies based on the s.f.\ and c.d.f., introduced and studied in Rao et al.\ \cite{Rao:etal} and Di Crescenzo and Longobardi \cite{DiCrescenzo:Longobardi}, respectively. In addition, it is able to recover both generalized and fractional cumulative entropies, see
	Di Crescenzo et al.\ \cite{DiCrescenzo:etal}, Kayal \cite{Kayal:2016}, Psarrakos and Navarro \cite{Psarrakos:Navarro}, Toomaj and Di Crescenzo \cite{Toomaj:DiCrescenzo} and Xiong et al.\ \cite{Xiong:2019} as references. Marginal versions of Eq.\ (\ref{eq:CIGF}) have been also discussed in \cite{Capaldo:etal1}. In particular, we recall the cumulative residual information generating measure of $X$, defined as
	\begin{equation}\label{eq:K_X}
		K_X(\beta)\coloneqq G_X(0,\beta),
	\end{equation}
	for all $\beta\in\mathbb{R}$ such that $K_X(\beta)$ is finite. 
	\par 
	In the next result we provide an alternative expression of Eq.\ (\ref{eq:effGMD}) for $k=n$ in terms of Eq.\ (\ref{eq:CIGF}), from the signature representation discussed above. 
	\begin{theorem}\label{th:signature CIGF}
		Let $T=\Phi(X_1,\dots,X_n)$ be any coherent system having i.i.d.\ components. Then
		$$
		\mathsf{GMD}_n(T)=\sum_{i=1}^{n-1}S_{n-i+1}\binom{n}{i}G_X(n-i,i).
		$$
	\end{theorem}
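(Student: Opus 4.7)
The plan is to use the signature decomposition of $\bar F_T$ together with the fact that $\mathsf{GMD}_n(T)=\mathsf{E}(T)-\mathsf{E}(X_{1:n})$, expressing both expectations through the integral representation of $G_X(\alpha,\beta)$ in Eq.\ (\ref{eq:CIGF}).

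First I would write
$$
\mathsf{GMD}_n(T)=\mathsf{E}(T)-\mathsf{E}(X_{1:n})=\int_0^{+\infty}\bar F_T(t)\,{\rm d}t-\int_0^{+\infty}\left[\bar F(t)\right]^{n}\,{\rm d}t,
$$
using that the components are i.i.d.\ with common s.f.\ $\bar F$, so $\bar F_{1:n}(t)=[\bar F(t)]^n$ and hence $\mathsf{E}(X_{1:n})=K_X(n)=G_X(0,n)$, recalling Eqs.\ (\ref{eq:CIGF}) and (\ref{eq:K_X}).

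Next, I would substitute the signature representation of the system's reliability given in Eq.\ (\ref{eq:s.f. signature, order statistics}) into $\int_0^{+\infty}\bar F_T(t)\,{\rm d}t$. Interchanging sum and integral yields
$$
\mathsf{E}(T)=\sum_{i=1}^{n}S_{n-i+1}\binom{n}{i}\int_0^{+\infty}\left[F(t)\right]^{n-i}\left[\bar F(t)\right]^{i}\,{\rm d}t=\sum_{i=1}^{n}S_{n-i+1}\binom{n}{i}G_X(n-i,i),
$$
directly by the definition of $G_X$ in Eq.\ (\ref{eq:CIGF}).

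The key observation is then that the $i=n$ term in this sum equals exactly $\mathsf{E}(X_{1:n})$: since the signature is a probability vector one has $S_1=\sum_{i=1}^{n}s_i=1$, and $\binom{n}{n}=1$, so this term reduces to $G_X(0,n)=\mathsf{E}(X_{1:n})$. Subtracting cancels this last term and gives
$$
\mathsf{GMD}_n(T)=\sum_{i=1}^{n-1}S_{n-i+1}\binom{n}{i}G_X(n-i,i),
$$
which is the thesis. The proof is essentially a bookkeeping exercise; the only subtle point is recognizing that the $i=n$ summand absorbs $\mathsf{E}(X_{1:n})$ so that the upper index drops to $n-1$. No deeper obstacle is anticipated, since coherence guarantees $T\geq X_{1:n}$ and hence the positivity of $\mathsf{GMD}_n(T)$, consistent with the integrability required for $G_X(n-i,i)$ to be well defined (these integrals are dominated by $\int_0^{+\infty}\bar F(t)\,{\rm d}t=\mathsf{E}(X)<\infty$).
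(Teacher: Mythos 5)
Your proof is correct and follows essentially the same route as the paper: substitute the signature representation of $\bar F_T$ into $\mathsf{E}(T)$, identify each integral as $G_X(n-i,i)$, and note that the $i=n$ term equals $\mathsf{E}(X_{1:n})$ (since $S_1=1$) so it cancels in $\mathsf{GMD}_n(T)=\mathsf{E}(T)-\mathsf{E}(X_{1:n})$. The paper states this tersely; your write-up just makes the cancellation of the $i=n$ summand explicit, which is exactly the intended argument.
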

	\begin{proof}
		Making use of Eq.\ (\ref{eq:s.f. signature, order statistics}) for the calculation of $\mathsf{E}(T)$ in
		Eq.\ (\ref{eq:effGMD}), the thesis immediately follows from Eq.\ (\ref{eq:CIGF}).
	\end{proof}
	The Samaniego’s representation does not necessarily hold in the general case. Another way to compute the system reliability from a signature representation is given in Navarro et al.\ \cite{Navarro:etal}, namely minimal signature representation, making use of the concept of minimal path set representation (for further details see Section 2.3 in \cite{Navarro:2022}). Therefore, if $T$ is the lifetime of a coherent (or semi-coherent) system with exc.\ component lifetimes $X_1,\dots,X_n$, then
	\begin{equation}\label{eq:s.f. minimal signature}
		\bar{F}_T(t)=\sum_{i=1}^{n}a_i\bar{F}_{1:i}(t),
	\end{equation}
	for all $t$, where $a_1,\dots,a_n$ are some integer coefficients such that $a_1+\cdots+a_n=1$, and $\bar{F}_{1:i}(t)=\mathsf{Pr}\left(\min\{X_1,\dots,X_i\}>t\right)$ for $i\in\{1,\dots,n\}$.
	The vector ${\bf a}=\left(a_1,\dots,a_n\right)$ is called the {\em minimal signature} of $T$. Similarly to Theorem \ref{th:signature CIGF}, in the next result we provide an alternative expression of Eq.\ (\ref{eq:effGMD}) in terms of Eqs.\ (\ref{eq:K_X}) and (\ref{eq:s.f. minimal signature}). 
	\begin{theorem}\label{th:minimal signature CIGF}
		Let $T=\Phi(X_1,\dots,X_n)$ be any coherent system having i.i.d.\ components. Then
		\begin{equation*}\label{eq:effGMD minimal signature}
			\mathsf{GMD}_n(T)=\sum_{i=1}^{n-1}a_iK_X(i)+(a_n-1)K_X(n).
		\end{equation*}
	\end{theorem}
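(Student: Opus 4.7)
The plan is to combine the minimal signature representation in Eq.\ (\ref{eq:s.f. minimal signature}) with the integral definition of $\mathsf{GMD}_n(T)$ and rewrite everything in terms of the cumulative residual information generating measure $K_X$.

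First I would recall that for i.i.d.\ components the series system of size $i$ has survival function $\bar{F}_{1:i}(t)=[\bar{F}(t)]^i$, so that by Eq.\ (\ref{eq:K_X}) one has
$$
\mathsf{E}(X_{1:i})=\int_{0}^{+\infty}[\bar{F}(t)]^i\,{\rm d}t=K_X(i),\qquad i\in\{1,\dots,n\}.
$$
Since the minimal signature representation given in Eq.\ (\ref{eq:s.f. minimal signature}) also holds in the i.i.d.\ (hence exchangeable) setting, I would integrate both sides of $\bar{F}_T(t)=\sum_{i=1}^{n}a_i\bar{F}_{1:i}(t)$ over $[0,+\infty)$ to obtain
$$
\mathsf{E}(T)=\sum_{i=1}^{n}a_i\,K_X(i).
$$

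Next I would apply the identity $\mathsf{GMD}_n(T)=\mathsf{E}(T)-\mathsf{E}(X_{1:n})$ (which follows immediately from Definition \ref{def:effGMD} together with Eq.\ (\ref{eq:bivariateGMD mean}) applied to $X_{1:n}\le T$). Using $\mathsf{E}(X_{1:n})=K_X(n)$ and splitting off the $i=n$ summand gives
$$
\mathsf{GMD}_n(T)=\sum_{i=1}^{n}a_i K_X(i)-K_X(n)=\sum_{i=1}^{n-1}a_i K_X(i)+(a_n-1)K_X(n),
$$
which is the claimed formula.

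There is essentially no real obstacle here; the statement is a direct bookkeeping consequence of linearity of the integral applied to the minimal signature representation. The only point requiring a brief justification is the inequality $X_{1:n}\le T$ that legitimates writing $\mathsf{GMD}_n(T)=\mathsf{E}(T)-\mathsf{E}(X_{1:n})$ without an absolute value, and this holds because $T=\Phi(X_1,\dots,X_n)$ with $\Phi$ increasing satisfies $\Phi\ge X_{1:n}$ pathwise for any semi-coherent system.
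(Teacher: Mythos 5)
Your proof is correct and follows the same route as the paper: integrate the minimal signature representation $\bar{F}_T=\sum_i a_i\bar{F}_{1:i}$ to get $\mathsf{E}(T)=\sum_i a_i K_X(i)$, then use $\mathsf{GMD}_n(T)=\mathsf{E}(T)-\mathsf{E}(X_{1:n})$ with $\mathsf{E}(X_{1:n})=K_X(n)$. The paper states this argument more tersely, but there is no substantive difference.
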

	\begin{proof}
		By using Eq.\ (\ref{eq:s.f. minimal signature}) in the calculation of $\mathsf{E}(T)$ in Eq.\ (\ref{eq:effGMD}), the thesis immediately follows from Eq.\ (\ref{eq:K_X}).
	\end{proof}
	\par 
	The Samaniego’s representation can also be extended to semi-coherent systems, making use of the structural signature ${\bf s}^{(n)}$ of order $n$. A result similar to Theorem \ref{th:signature CIGF} holds for ${\bf s}^{(n)}$. Analogously, ${\bf a}^{(n)}=(a_1,\dots,a_k,0,\dots,0)$ is called the {\em minimal signature of order n} (see \cite{Navarro:2022}, pp.\ 48-49). Along the same line of Theorem \ref{th:minimal signature CIGF}, below we provide an alternative expression of Eq.\ (\ref{eq:effGMD}) in terms of Eq.\ (\ref{eq:K_X}), by using the minimal signature of order $n$.
	\begin{theorem}
		Let $T=\Phi(X_1,\dots,X_k)$ be any semi-coherent system having i.i.d.\ components, contained in a random vector $(X_1,\dots,X_n)$. Therefore
		\begin{equation}\label{eq:effGMD minimal signature of order n i.i.d.}
			\mathsf{GMD}_n(T)=\sum_{i=1}^{k}a_iK_X(i)-K_X(n).
		\end{equation}
	\end{theorem}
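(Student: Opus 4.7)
The plan is to mimic the proof of Theorem \ref{th:minimal signature CIGF} almost verbatim, adapting it for the \emph{minimal signature of order $n$} ${\bf a}^{(n)}=(a_1,\dots,a_k,0,\dots,0)$ of a semi-coherent system $T$ of order $k$ embedded in a vector of $n$ i.i.d.\ components. The starting point is the identity $\mathsf{GMD}_n(T)=\mathsf{E}(T)-\mathsf{E}(X_{1:n})$, which follows directly from Eq.\ (\ref{eq:effGMD}) together with $\mathsf{GMD}(X,Y)=\mathsf{E}(\max\{X,Y\})-\mathsf{E}(\min\{X,Y\})$ applied to $X_{1:n}\le T$ almost surely.

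First, I would compute $\mathsf{E}(T)$. Since the components are i.i.d.\ with common s.f.\ $\bar F$, the minimal-signature-of-order-$n$ representation of Eq.\ (\ref{eq:s.f. minimal signature}) gives
\begin{equation*}
\bar{F}_T(t)=\sum_{i=1}^{k} a_i\,\bar{F}_{1:i}(t)=\sum_{i=1}^{k} a_i\,[\bar{F}(t)]^{i},
\end{equation*}
because $\bar{F}_{1:i}(t)=\mathsf{Pr}(X_1>t,\dots,X_i>t)=[\bar{F}(t)]^i$ by independence. Integrating term by term and using the definition of $K_X$ in Eq.\ (\ref{eq:K_X}) yields $\mathsf{E}(T)=\sum_{i=1}^{k}a_i\,K_X(i)$.

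Second, I would compute $\mathsf{E}(X_{1:n})$. Since $X_1,\dots,X_n$ are i.i.d.\ with s.f.\ $\bar{F}$, the s.f.\ of $X_{1:n}$ is $[\bar{F}(t)]^n$, so $\mathsf{E}(X_{1:n})=\int_0^{+\infty}[\bar{F}(t)]^n{\rm d}t=K_X(n)$. Subtracting this from $\mathsf{E}(T)$ gives Eq.\ (\ref{eq:effGMD minimal signature of order n i.i.d.}) immediately.

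There is no real obstacle: the argument is just bookkeeping with the minimal-signature expansion and the definition of $K_X$. The only subtle point worth flagging is that the coefficient $-1$ multiplying $K_X(n)$ here arises from $\mathsf{E}(X_{1:n})$ being subtracted, rather than (as in Theorem \ref{th:minimal signature CIGF}) from an $(a_n-1)$ term; this is why the sum now runs up to $k$ instead of $n-1$ and why $K_X(n)$ appears as a standalone correction. Beyond that, the proof reduces to one line of calculation once the representation and $K_X$ have been invoked.
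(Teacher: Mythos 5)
Your proposal is correct and follows exactly the route the paper intends: the paper states this result without a separate proof, deriving it ``along the same line'' as Theorem \ref{th:minimal signature CIGF}, i.e.\ insert the minimal signature representation of order $n$ into $\mathsf{E}(T)$, note $\bar F_{1:i}=\bar F^i$ by independence so that $\mathsf{E}(T)=\sum_{i=1}^{k}a_iK_X(i)$, and subtract $\mathsf{E}(X_{1:n})=K_X(n)$. Your remark explaining why the $-K_X(n)$ term stands alone here (it comes from $\mathsf{E}(X_{1:n})$ with $a^{(n)}_n=0$, rather than from an $(a_n-1)$ coefficient) is a correct and useful clarification of the bookkeeping.
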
 
	Consider now an exc.\ random vector $(X_1,\dots,X_n)$ having marginals with common s.f.\ $\bar{F}$. Then for any semi-coherent system $T=\Phi(X_1,\dots,X_k)$, for $k\le n$, one has
	\begin{equation}\label{eq:effGMD minimal signature of order n i.d.}
		\mathsf{GMD}_n(T)=\sum_{i=1}^{k}a_i\int_{0}^{+\infty}\widehat{\delta}_i(\bar{F}(t)){\rm d}t-\int_{0}^{+\infty}\widehat{\delta}_n(\bar{F}(t)){\rm d}t,
	\end{equation}
	where $\widehat{\delta}_i(u)=\widehat{C}(u,\dots,u,1,\dots,1)$, for $u$ repeated $i$-times, is the diagonal section of the copula of $(X_1,\dots,X_i)$, for $i\in\{1,\dots,n\}$ and $u\in[0,1]$. Clearly, in the i.i.d.\ case Eq.\ (\ref{eq:effGMD minimal signature of order n i.d.}) reduces to Eq.\ (\ref{eq:effGMD minimal signature of order n i.i.d.}).
	\par 
	In Table \ref{tab:effGMD 1-4}, making use of Eq.\ (\ref{eq:effGMD minimal signature of order n i.i.d.}), we compute the efficiency Gini's index for all the coherent systems with $1$-$4$ i.i.d.\ components, uniformly or exponentially distributed. Table \ref{tab:effGMD 1-4 FGM} shows the efficiency Gini's index for all coherent systems with $1$-$4$ i.d.\ components, uniformly or exponentially distributed, computed using Eq.\ (\ref{eq:effGMD minimal signature of order n i.d.}) under the following $4$-dimensional FGM copula
	\begin{equation}\label{eq:FGM diagonal Table 2}
		\widehat{C}(u_1,u_2,u_3,u_4)=u_1u_2u_3u_4\left(1+\theta(1-u_1)(1-u_2)(1-u_3)(1-u_4)\right),\qquad u_1,u_2,u_3,u_4\in[0,1],
	\end{equation}
	with diagonal section 
	$$
	\widehat{\delta}_4(u)=u^4\left(1+\theta(1-u)^4\right),\qquad u\in[0,1],
	$$
	for $\theta=1,-1$, where $\widehat{\delta}_i(u)=u^i$, for $i=1,2,3$.
	\par 
	Finally, we remark that Eq.\ (\ref{eq:effGMD minimal signature of order n i.d.}) can also be extended to the not exc.\ case, making use of minimal path set representation for semi-coherent systems (see \cite{Navarro:2022}, p.\ 37).

	
	\begin{table}[http]
		\captionsetup{margin=0.91cm}
		\caption{\small Efficiency Gini's index for all coherent systems with $1$-$4$ i.i.d.\ components, uniformly or exponentially distributed, making use of the minimal signature representation of order $4$. The values have been calculated from Eq.\ (\ref{eq:effGMD minimal signature of order n i.i.d.}), by rounding to the third decimal place.
			\label{tab:effGMD 1-4}}
		\centering
		\scalebox{0.855}{
			\begin{tabular}{l@{\hspace{0,5cm}}l@{\hspace{0,5cm}}c@{\hspace{0,5cm}}c@{\hspace{0,5cm}}c}
				\toprule
				i & $T_i$ & ${\bf a^{(4)}}$ & $\mathsf{G}_n(T)$& $\mathsf{G}_n(T)$\\
				&  &  &  $X_j\sim\mathcal{U}(0,1)$& $X_j\sim{\rm Exp}(1)$\\
				\midrule
				\vspace{0,12cm}
				1  & $X_{1:1}=X_1$  & $(1, 0, 0, 0)$ & $0.500$ & $0.409$ \\
				\vspace{0,12cm}
				2  & $X_{1:2}=\min(X_1, X_2)$ (2-series)  & $(0, 1, 0, 0)$  & $0.222$ & $0.136$\\
				\vspace{0,12cm}
				3  & $X_{2:2}=\max(X_1, X_2)$ (2-parallel) & $(2, -1, 0, 0)$ & $0.778$ & $0.682$\\
				\vspace{0,12cm}
				4  & $X_{1:3}=\min(X_1, X_2, X_3)$ (3-series)  & $(0, 0, 1, 0)$ & $0.083$ & $0.045$\\
				\vspace{0,12cm}
				5  & $\min(X_1,\max(X_2,X_3))$  & $\left(0, 2 , -1, 0\right)$ & $0.361$ & $0.227$\\
				\vspace{0,12cm}
				6  & $X_{2:3}$ (2-out-of-3) & $(0, 3, -2, 0)$ & $0.500$ & $0.318$\\
				\vspace{0,12cm}
				7  & $\max(X_1,\min(X_2,X_3))$ & $\left(1, 1, -1, 0\right)$ & $0.639$ & $0.500$\\
				\vspace{0,12cm}
				8  & $X_{3:3}=\max(X_1, X_2, X_3)$ (3-parallel) & $(3, -3, 1, 0)$ & $0.917$ & $0.864$\\
				\vspace{0,12cm}
				9  & $X_{1:4}=\min(X_1, X_2, X_3, X_4)$ (4-series)  & $(0, 0, 0, 1)$ & $0$ &$0$\\
				\vspace{0,12cm}
				10 & $\max(\min(X_1, X_2, X_3),\min(X_2, X_3, X_4))$ & $\left(0, 0, 2, -1\right)$ & $0.167$ & $0.091$\\
				\vspace{0,12cm}
				11 & $\min(X_{2:3},X_4)$ & $\left(0, 0, 3, -2\right)$  & $0.250$ & $0.136$\\
				\vspace{0,12cm}
				12 & $\min(X_1,\max(X_2,X_3),\max(X_3,X_4))$ & $\left(0, 1, 1, -1\right)$ & $0.306$ & $0.182$\\
				\vspace{0,12cm}
				13 & $\min(X_1,\max(X_2,X_3,X_4))$ & $\left(0, 3, -3, 1\right)$  & $0.417$ & $0.273$\\
				\vspace{0,12cm}
				14 & $X_{2:4}$ (3-out-of-4) & $(0, 0, 4, -3)$ & $0.333$ & $0.182$\\
				\vspace{0,12cm}
				15 & $\max(\min(X_1, X_2), \min(X_1, X_3, X_4),
				\min(X_2, X_3, X_4))$ & $(0, 1, 2, -2)$ & $0.389$ & $0.227$\\
				\vspace{0,12cm}
				16 & $\max(\min(X_1, X_2), \min(X_3, X_4))$  & $(0, 2, 0, -1)$ & $0.444$ & $0.273$\\
				\vspace{0,12cm}
				17 & $\max(\min(X_1, X_2), \min(X_1, X_3),
				\min(X_2, X_3, X_4))$  & $(0, 2, 0, -1)$ & $0.444$ & $0.273$\\
				\vspace{0,12cm}
				18 & $\max(\min(X_1, X_2), \min(X_2, X_3),
				\min(X_3, X_4))$ & $\left(0, 3, -2, 0\right)$  & $0.500$ & $0.318$\\
				\vspace{0,12cm}
				19 & $\max(\min(X_1, \max(X_2, X_3, X_4)),
				\min(X_2, X_3, X_4))$  & $\left(0, 3, -2, 0\right)$  & $0.500$ & $0.318$\\
				\vspace{0,12cm}
				20 & $\min(\max(X_1, X_2), \max(X_1, X_3),
				\max(X_2, X_3, X_4))$ & $\left(0, 4, -4, 1\right)$  & $0.556$ & $0.364$\\
				\vspace{0,12cm}
				21 & $\min(\max(X_1, X_2), \max(X_3, X_4))$  & $\left(0, 4, -4, 1\right)$ &  $0.556$ & $0.364$\\
				\vspace{0,12cm}
				22 & $\min(\max(X_1, X_2), \max(X_1, X_3, X_4),
				\max(X_2, X_3, X_4))$ & $\left(0, 5, -6, 2\right)$  &  $0.611$ & $0.409$\\
				\vspace{0,12cm}
				23 & $X_{3:4}$ (2-out-of-4)  & $\left(0, 6, -8, 3\right)$  &  $0.667$ & $0.455$\\
				\vspace{0,12cm}
				24 & $\max(X_1, \min(X_2, X_3, X_4))$ & $\left(1, 0, 1, -1\right)$  &  $0.583$ & $0.455$\\
				\vspace{0,12cm}
				25 & $\max(X_1, \min(X_2, X_3), \min(X_3, X_4))$ & $\left(1, 2, -3, 1\right)$ & $0.694$ & $0.545$\\
				\vspace{0,12cm}
				26 & $\max(X_{2:3}, X_4)$ & $\left(1, 3, -5, 2\right)$ & $0.750$ & $0.591$\\
				\vspace{0,12cm}
				27 & $\min(\max(X_1, X_2, X_3), \max(X_2, X_3, X_4))$ & $\left(2, 0, -2, 1\right)$ & $0.833$ & $0.727$\\
				\vspace{0,12cm}
				28 & $X_{4:4} = \max(X_1, X_2, X_3, X_4)$ (4-parallel) & $(4, -6, 4, -1)$ & $1$ & $1$\\
				\bottomrule
			\end{tabular}
		}
	\end{table}
	
	\par
	\begin{table}[http]
		\captionsetup{margin=0.91cm}
		\caption{\small Efficiency Gini's index for all coherent systems considered in Table \ref{tab:effGMD 1-4}, in this case with $1$-$4$ i.d.\ components, uniformly or exponentially distributed, having FGM copula in Eq.\ (\ref{eq:FGM diagonal Table 2}), for $\theta=1,-1$ and making use of the minimal signature representation of order $4$. The values have been calculated from Eq.\ (\ref{eq:effGMD minimal signature of order n i.d.}), by rounding to the third decimal place.
			\label{tab:effGMD 1-4 FGM}}
			\centering
			\scalebox{0.825}{
			\begin{tabular}{l@{\hspace{0,5cm}}c@{\hspace{0,5cm}}c@{\hspace{0,5cm}}c@{\hspace{0,5cm}}c@{\hspace{0,5cm}}c}
				\toprule
				i & ${\bf a^{(4)}}$ & $\mathsf{G}_n(T)$ & $\mathsf{G}_n(T)$ & $\mathsf{G}_n(T)$ & $\mathsf{G}_n(T)$ \\
				&  & $X_j\sim\mathcal{U}(0,1), \theta=1$ & $X_j\sim\mathcal{U}(0,1), \theta=-1$ &  $X_j\sim{\rm Exp}(1), \theta=1$ & $X_j\sim{\rm Exp}(1), \theta=-1$ \\
				\midrule
				\vspace{0,12cm}
				1  & $(1, 0, 0, 0)$ & $0.500$ & $0.500$ &  $0.409$ & $0.409$\\
				\vspace{0,12cm}
				2  & $(0, 1, 0, 0)$ & $0.221$ & $0.224$ &  $0.135$ & $0.138$ \\
				\vspace{0,12cm}
				3  & $(2, -1, 0, 0)$ & $0.779$ & $0.776$ &  $0.683$ & $0.681$ \\
				\vspace{0,12cm}
				4  & $(0, 0, 1, 0)$ & $0.081$ & $0.086$ &  $0.044$ & $0.047$ \\
				\vspace{0,12cm}
				5  & $\left(0, 2 , -1, 0\right)$ & $0.360$ & $0.362$ &  $0.226$ & $0.228$ \\
				\vspace{0,12cm}
				6  & $(0, 3, -2, 0)$ & $0.500$ & $0.500$ & $0.317$ & $0.319$ \\
				\vspace{0,12cm}
				7  & $\left(1, 1, -1, 0\right)$ & $0.640$ & $0.638$ & $0.500$ & $0.500$ \\
				\vspace{0,12cm}
				8  & $(3, -3, 1, 0)$ & $0.919$ & $0.914$ & $0.865$ & $0.862$ \\
				\vspace{0,12cm}
				9  & $(0, 0, 0, 1)$ & $0$ & $0$ & $0$ & $0$ \\
				\vspace{0,12cm}
				10 & $\left(0, 0, 2, -1\right)$ & $0.162$ & $0.171$ & $0.087$ & $0.094$ \\
				\vspace{0,12cm}
				11 & $\left(0, 0, 3, -2\right)$ & $0.243$ & $0.257$ & $0.131$ & $0.142$ \\
				\vspace{0,12cm}
				12 & $\left(0, 1, 1, -1\right)$ & $0.302$ & $0.309$ & $0.179$ & $0.185$ \\
				\vspace{0,12cm}
				13 & $\left(0, 3, -3, 1\right)$ & $0.419$ & $0.414$ & $0.274$ & $0.272$ \\
				\vspace{0,12cm}
				14 & $(0, 0, 4, -3)$ & $0.324$ & $0.342$ & $0.175$ & $0.189$ \\
				\vspace{0,12cm}
				15 & $(0, 1, 2, -2)$ & $0.383$ & $0.395$ & $0.222$ & $0.232$ \\
				\vspace{0,12cm}
				16 & $(0, 2, 0, -1)$ & $0.441$ & $0.447$ & $0.270$ & $0.276$\\
				\vspace{0,12cm}
				17 & $(0, 2, 0, -1)$ & $0.441$ & $0.447$ & $0.270$ & $0.276$ \\
				\vspace{0,12cm}
				18 & $\left(0, 3, -2, 0\right)$ & $0.500$ & $0.500$ & $0.317$ & $0.319$ \\
				\vspace{0,12cm}
				19 & $\left(0, 3, -2, 0\right)$ & $0.500$ & $0.500$ & $0.317$ & $0.319$ \\
				\vspace{0,12cm}
				20 & $\left(0, 4, -4, 1\right)$ & $0.559$ & $0.553$ & $0.365$ & $0.362$ \\
				\vspace{0,12cm}
				21 & $\left(0, 4, -4, 1\right)$ & $0.559$ & $0.553$ & $0.365$ & $0.362$ \\
				\vspace{0,12cm}
				22 & $\left(0, 5, -6, 2\right)$ & $0.617$ & $0.605$ & $0.413$ & $0.406$ \\
				\vspace{0,12cm}
				23 & $\left(0, 6, -8, 3\right)$ & $0.676$ & $0.658$ & $0.460$ & $0.449$ \\
				\vspace{0,12cm}
				24 & $\left(1, 0, 1, -1\right)$ & $0.581$ & $0.586$ & $0.452$ & $0.457$ \\
				\vspace{0,12cm}
				25 & $\left(1, 2, -3, 1\right)$ & $0.698$ & $0.691$ & $0.548$ & $0.543$ \\
				\vspace{0,12cm}
				26 & $\left(1, 3, -5, 2\right)$ & $0.757$ & $0.743$ & $0.595$ & $0.587$ \\
				\vspace{0,12cm}
				27 & $\left(2, 0, -2, 1\right)$ & $0.838$ & $0.829$ & $0.730$ & $0.724$ \\
				\vspace{0,12cm}
				28 & $(4, -6, 4, -1)$ & $1$ & $1$ & $1$ & $1$ \\
				\bottomrule
			\end{tabular}	
		}
	\end{table}
	\par

	\section{Examples and simulations}\label{sec:examples and simulations}
	In this section we show examples of the bivariate Gini's indices defined in Section \ref{sec:bivariateGMD}. Interpretations in terms of areas also arises for the bivariate Gini's index in Eq.\ (\ref{eq:bivariateGindex}), as for the univariate one given in Eq.\ (\ref{eq:Gindex}). We also define empirical Gini's indices to approximate them from data, by providing simulations. We also compute the efficiency Gini's index for some systems. 
	\par 
	In the first example we consider bivariate Gini's indices of $(X,Y)$ with i.d.\ marginals, according to the uniform distribution, under different copulas. Similarly to the univariate case, the bivariate Gini's index turns out to be two times the area between the egalitarian line and the diagonal section of $(X,Y)$. 
	\begin{example}
		Let $(X,Y)$ be a random vector with diagonal section $\delta$. If $X$ and $Y$ are uniformly distributed over $[0,b]$, with $b>0$, from Eq.\ (\ref{eq:bivariateGMD c.d.f.}) it follows that 
		$$
		\mathsf{GMD}(X,Y)=2b\int_{0}^{1}\left\{t-\delta(t)\right\}{\rm d}t=b\left(1-2\int_{0}^{1}\delta(t){\rm d}t\right)
		$$
		and, from Eq.\ (\ref{eq:bivariateGindex}), one has 
		\begin{equation}\label{eq:bivariateGindex, ID, uniform}
			\mathsf{G}(X,Y)=2\int_{0}^{1}\left\{t-\delta(t)\right\}{\rm d}t=1-2\int_{0}^{1}\delta(t){\rm d}t,
		\end{equation}
		that does not depend on $b$. The first expression in Eq.\ (\ref{eq:bivariateGindex, ID, uniform}) is similar to the representation for the univariate Gini's index as two times the area between the Lorenz curve and the line $y=x$ (see Fig.\ 5.2 in \cite{Arnold:Sarabia}). For example, referring to Eq.\ (\ref{eq:bivariateGindex, ID, uniform}), in the left-hand-side of Fig.\ \ref{fig1} we plot the area (dark grey) in the i.i.d.\ case, i.e., for $\delta(t)=t^2$ for $t\in[0,1]$ (black line). We obtain an area of $1/6$ and so $\mathsf{G}(X,Y)=1/3$, that clearly represents also the value of the univariate Gini's index of the uniform distribution (since $X$ and $Y$ are independent). We also add the two extreme cases obtained from the FH bounds in Eq.\ (\ref{eq:Fréchet-Hoeffding bounds}). For $M$ (red line) we get the lower bound for the index $\mathsf{G}(X,Y)=0$, since $\delta(t)=t$. For $W$ (blue line) we obtain an area of $1/4$ (dark and light grey) and therefore $\mathsf{G}(X,Y)=1/2$ is the upper bound for the Gini's index of any bidimensional copula with uniform marginals (as shown in Remark \ref{rem:bound}).  
		Moreover,  referring to Eq.\ (\ref{eq:bivariateGindex, ID, uniform}), in the right-hand-side of Fig.\ \ref{fig1} one can see the areas for the diagonal section of the FGM family of copulas defined in Eq.\ (\ref{eq:FGM}) for $\theta= 0$ (black, i.i.d. case), $1,0.5$ (orange) and $-1,-0.5$ (green). 
		We provide the areas for $\theta=1,-1$  $(2/15,3/15)$ that lead to the extreme Gini's indices $\mathsf{G}(X,Y)=4/15=0.2666667$ and $\mathsf{G}(X,Y)=2/5=0.4$ for this family of copulas. Note that the case $\theta=0$ gives the bound $\mathsf{G}(X,Y)=1/3$ for positive $(\theta>0)$ and negative dependence cases $(\theta<0)$. In this family the Gini's index is decreasing with $\theta$, as expected.
		\par
		Similarly, referring to Eq.\ (\ref{eq:bivariateGindex, ID, uniform}), in the left-hand-side of Fig.\ \ref{fig2} we plot $\delta$ for the Clayton copulas defined in Eq.\ (\ref{eq:Clayton}) with $\theta=1,2,5,10,20$ (orange) and $\theta=-0.2,-0.4,-0.6,-0.8$ (green). The independent case is obtained when $\theta\to0$ (black) while $M$ is obtained when  $\theta\to\infty$ (red) and $W$ for $\theta=-1$ (blue). We provide the areas for $\theta=1$ $(0.1137056)$ and $\theta=-0.8$ $(0.2340531)$ that lead to $\mathsf{G}(X,Y)=0.2274112$ and $\mathsf{G}(X,Y)=0.4681062$, respectively. The upper bound is $0.5$ (blue line) and $1/3$ is also a bound for positive $(\theta>0)$ and negative $(\theta<0)$ dependence cases.
		Moreover, referring to Eq.\ (\ref{eq:bivariateGindex, ID, uniform}), in the right-hand-side of Fig.\ \ref{fig2} we plot $\delta$ for the Frank copulas defined in Eq.\ (\ref{eq:Frank}) with $\theta=1,2,5,10,20$ (orange) and $\theta=-1,-2,-5,-10,-20$ (green). The independent case is obtained when $\theta\to0$ (black) while $M$ is obtained when $\theta\to+\infty$ (red) and $W$ for $\theta\to-\infty$ (blue). We provide the areas for $\theta=1$ $(0.1498039)$ and $\theta=-1$ $(0.1827476)$ that lead to $\mathsf{G}(X,Y)=0.2996078$ and $\mathsf{G}(X,Y)=0.3654952$, respectively.
	\end{example}
	\begin{figure}[h!]
		\begin{center}
			\includegraphics*[scale=0.6]{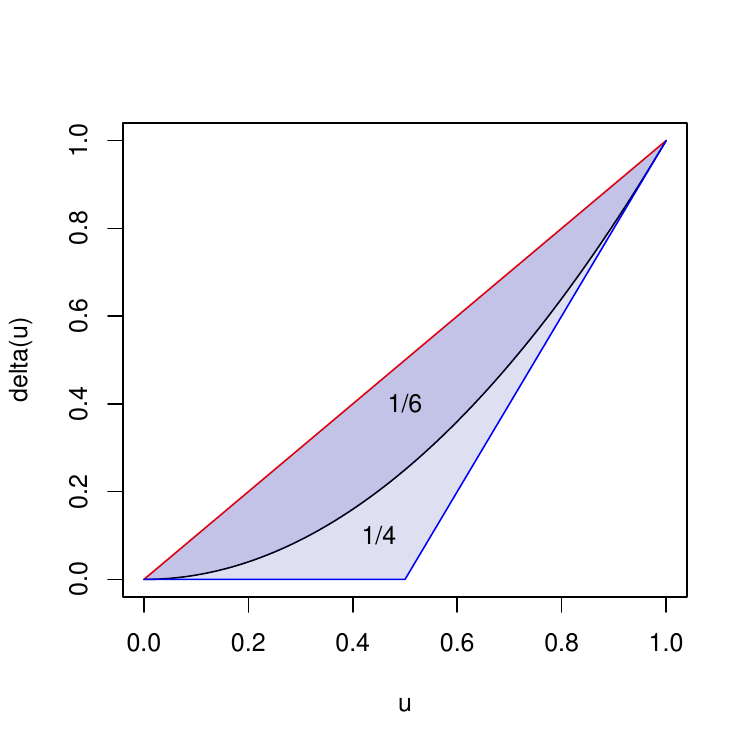}
			\includegraphics*[scale=0.6]{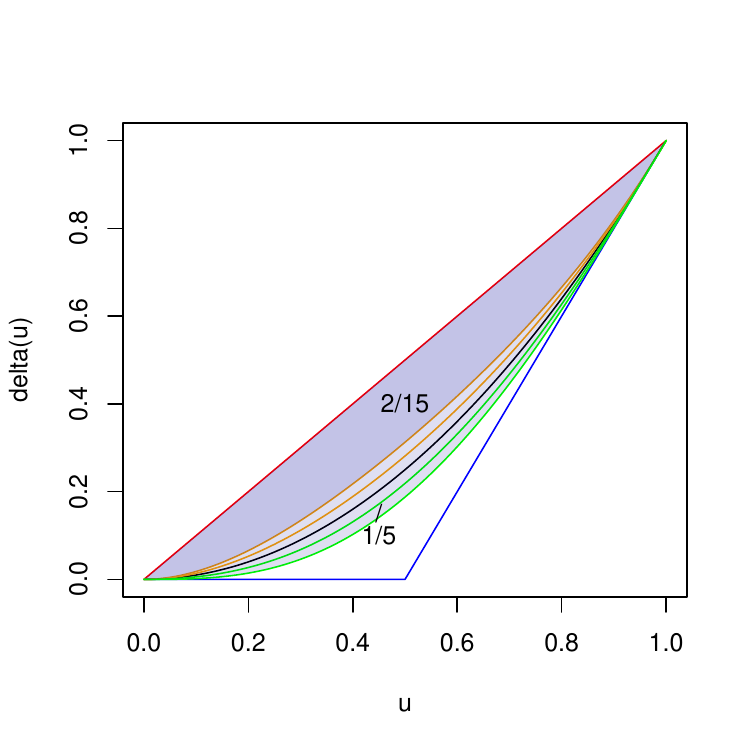}
			\captionsetup{margin=0.91cm}
			\caption{\small Plots and areas referring to Eq.\ (\ref{eq:bivariateGindex, ID, uniform}) for the diagonal section $\delta$ in the independent case (black), $M$ (red) and $W$ (blue) and FGM copulas for $\theta=1,0.5$ (orange) and $-1,-0.5$ (green).}\label{fig1}
		\end{center}
	\end{figure}
	\begin{figure}[h!]
		\begin{center}
			\includegraphics*[scale=0.6]{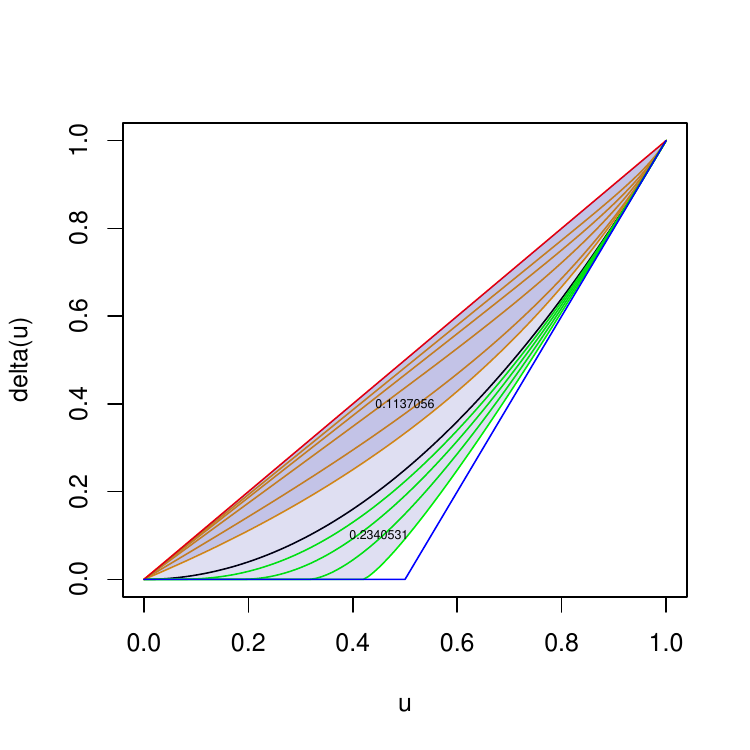}
			\includegraphics*[scale=0.6]{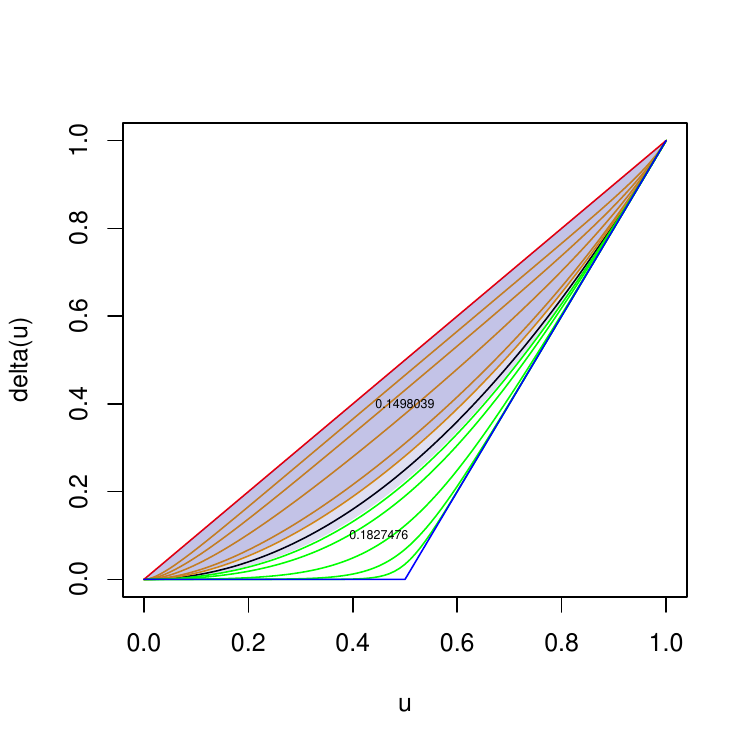}
			\captionsetup{margin=0.91cm}
			\caption{\small Plots and areas referring to Eq.\ (\ref{eq:bivariateGindex, ID, uniform}) for the diagonal section $\delta$ of Clayton copulas (left-hand-side) for $\theta=1,2,5,10,20$ (orange) and $\theta=-0.2,-0.4,-0.6,-0.8$ (green), while Frank copulas (right-hand-side) for $\theta=1,2,5,10,20$ (orange) and $\theta=-1,-2,-5,-10,-20$ (green).}\label{fig2}
		\end{center}
	\end{figure}
	\par
	In the second example we consider bivariate Gini's indices of $(X,Y)$ with i.d.\ marginals, according to the exponential distribution, under different copulas. In this model, the interpretation in terms of area is different from the univariate case. Indeed, the bivariate Gini's index turns out to be the area between the unitary line and the ratio based on the diagonal section of $(X,Y)$ and the egalitarian line. 
	\begin{example}
		Let $(X,Y)$ be a random vector with survival diagonal section $\widehat{\delta}$. 
		If $X$ and $Y$ are exponentially distributed with mean $\mu$, from Eq.\ (\ref{eq:bivariateGMD}) it follows that 
		$$
		\mathsf{GMD}(X,Y)=2\mu\int_{0}^{1}\left\{1-\frac{\hat{\delta}(t)}{t}\right\}{\rm d}t 
		$$
		and, from Eq.\ (\ref{eq:bivariateGindex}), one has 
		\begin{equation}\label{eq:bivariateGindex, ID, exponential}
			\mathsf{G}(X,Y)=1-\int_{0}^{1}\frac{\hat{\delta}(t)}{t}{\rm d}t.	
		\end{equation}
		Eq.\ (\ref{eq:bivariateGindex, ID, exponential}) just depends on the diagonal section $\widehat{\delta}(u)=\widehat C(u,u)$, for all $u\in[0,1]$, and the bivariate Gini's index can also be represented as the area between $1$ and $\hat{\delta}(u)/u$. It is easy to see that in the PQD (NQD) case one has $\widehat{\delta}(u)\geq u^2$ $(\leq)$ and $\mathsf{G}(X,Y)\leq 1/2$ $(\geq)$. For example, in the left-hand-side of Fig.\ \ref{fig3}  we plot the area (dark grey) in the i.i.d.\ case, i.e., $\hat{\delta}(t)=t^2$ for $t\in[0,1]$ (black line), and  we also add the two extreme cases obtained from the FH bounds in Eq.\ (\ref{eq:Fréchet-Hoeffding bounds}). For the i.i.d.\ case we obtain an area of $1/2$ and therefore $\mathsf{G}(X,Y)=1/2$, that is also the value of the univariate Gini's index of the exponential distribution (since $X$ and $Y$ are independent). For $M$ (red line) we get the lower bound for the index $\mathsf{G}(X,Y)=0$, while for $W$ (blue line) we obtain an area of $\ln2$ (dark and light grey) and therefore $\mathsf{G}(X,Y)=\ln2= 0.6931472$ is the upper bound for the Gini's index of any bidimensional copula with exponential marginal distributions (as shown in Remark \ref{rem:bound}). Moreover, in the right-hand-side of Fig.\ \ref{fig3} one can see the areas for the FGM family of copulas defined in Eq.\ (\ref{eq:FGM}) with $\theta=0$ (black, i.i.d. case), $1,0.5$ (orange)  and $-1,-0.5$ (green). The respective Gini's indices are $\mathsf{G}(X,Y)=0.4166667$ and $\mathsf{G}(X,Y)=0.5833333$. 
	\end{example}
	\begin{figure}[h!]
		\begin{center}
			\includegraphics*[scale=0.6]{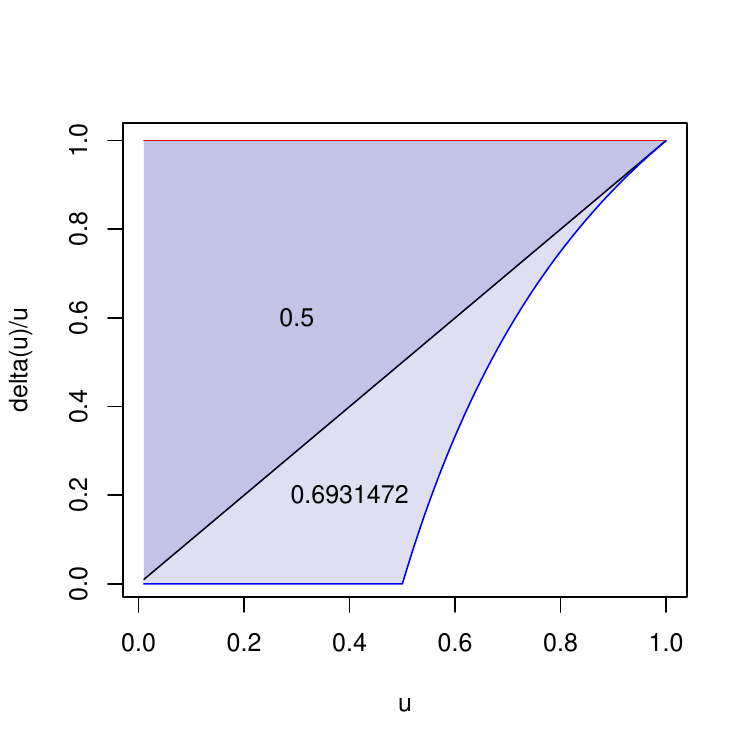}
			\includegraphics*[scale=0.6]{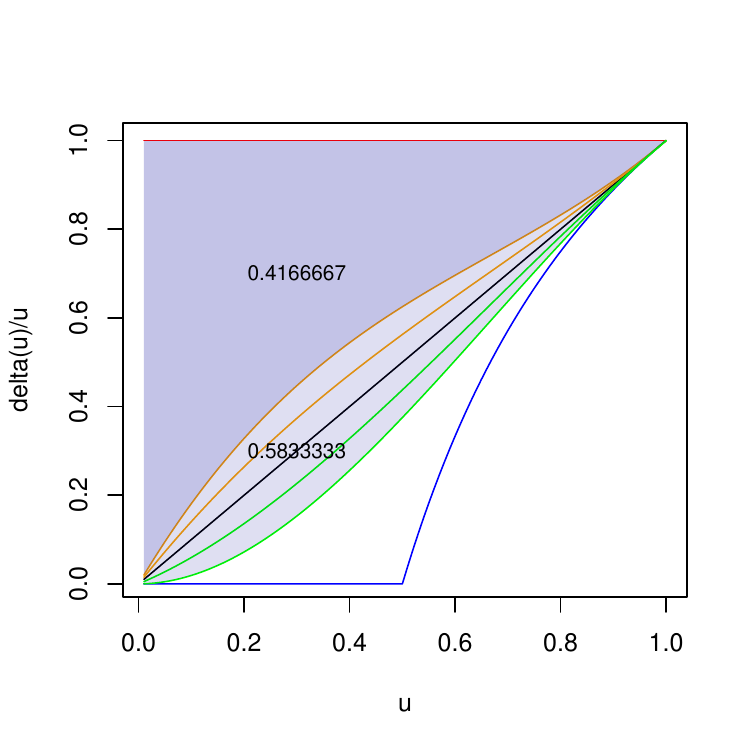}
			\captionsetup{margin=0.91cm}
			\caption{\small Plots and areas referring to Eq.\ (\ref{eq:bivariateGindex, ID, exponential}) for $\hat{\delta}(u)/u$ in the exponential independent case (black), $M$ (red) and $W$ (blue) and FGM copulas for $\theta=1,0.5$ (orange) and $-1,-0.5$ (green).}\label{fig3}
		\end{center}
	\end{figure}
	\par
	By recalling Eqs.\ (\ref{eq:mean difference}) and (\ref{eq:mean index}) we now define empirical versions of the bivariate Gini's indices introduced in Eqs.\ (\ref{eq:bivariateGMD}) and (\ref{eq:bivariateGindex}), respectively. 
	\begin{definition}\label{def:empirical bivariate Gini}
		Let $(X,Y)$ be a random vector. Let $(X_i,Y_i)$ be i.i.d.\ from $(X,Y)$, for $i\in\{1,\dots,n\}$. Consider $L_i=\min(X_i,Y_i)$ and $U_i=\max(X_i,Y_i)$. The empirical bivariate Gini's mean difference and empirical bivariate Gini's index of $(X,Y)$ are defined respectively as
		$$
		\mathsf{\widehat{GMD}}(X,Y)=\bar{U}-\bar{L},\qquad\mathsf{\widehat{G}}(X,Y)=\frac{\bar{U}-\bar{L}}{\bar{U}+\bar{L}},
		$$ 
		where $\bar{L}=\frac1n\sum_{i=1}^{n}L_i$ and $\bar{U}=\frac1n\sum_{i=1}^{n}U_i$. 
	\end{definition}
	
	Note that $Z_i=|X_i-Y_i|$, for $i\in\{1,\dots,n\}$, are i.i.d.\ random variables from $Z=|X-Y|$ and that 
	$$ 		
	\mathsf{\widehat{GMD}}(X,Y)=\frac1n\sum_{i=1}^{n}Z_i=\bar{Z},\qquad\mathsf{\widehat{G}}(X,Y)=\frac{\sum_{i=1}^{n}Z_i}{\sum_{i=1}^{n}(X_i+Y_i)}=\frac{\bar{Z}}{\bar{X}+\bar{Y}}
	$$
	and therefore we can apply here all the classic convergence theorems for the sample means. Let us see two examples. 
	\begin{example}\label{ex1}
		To get a simulated sample we consider the Clayton copula in Eq.\ (\ref{eq:Clayton}) for $\theta=1$ given by
		$$
		C(u,v)=\frac{uv}{u+v-uv},\qquad u,v\in[0,1].
		$$
		If $X=_{st}Y\sim\mathcal{U}(0,1)$, then the conditional distribution function of $Y|X=u$ for $u\in(0,1)$ is 
		$$
		C_{2|1}(v|u)=\partial_1 C(u,v)=\frac{v^2}{(u+v-uv)^2},\qquad v\in[0,1].
		$$
		In order to simulate $Y|X=u$ from a given value $u$, we obtain the inverse function of $C_{2|1}$ as  
		\begin{equation}\label{eq:inverse C}
			C^{-1}_{2|1}(z|u)=\frac{u\sqrt z}{1-(1-u)\sqrt z},\qquad z\in[0,1].
		\end{equation}
		To get a sample with $100$ data we simulate $X_i\sim\mathcal{U}(0,1)$ for $i\in\{1,\dots,100\}$ and then we obtain $Y_i$ from Eq.\ (\ref{eq:inverse C}) with $u=X_i$ and $z=Z_i\sim\mathcal{U}(0,1)$ for $i\in\{1,\dots,100\}$. Therefore we get the estimation for the bivariate Gini's index $\widehat{\mathsf{G}}(X,Y)=0.2357458$ given in Definition \ref{def:empirical bivariate Gini}.
		The exact value is $\mathsf{G}(X,Y)=2\cdot 0.1137056=0.2274112$ (see Fig.\ \ref{fig2}, left). The data obtained for $(L_i,U_i)$ can be seen in the left-hand-side of Fig.\ \ref{fig4}.
		To get a sample from standard exponential distributions if $\tilde{X}=_{st}\tilde{Y}\sim\rm{Exp}(1)$ with this survival copula we just apply the inverse transform $\bar{F}^{-1}(z)=-\ln(z)$ to the above uniform data obtaining the estimation $\widehat{\mathsf{G}}(X,Y)=0.5\cdot0.6366323=0.3183161$. The exact value is $\mathsf{G}(X,Y)=0.3068528$.
		The data obtained for $(\tilde{L}_i,\tilde{U}_i)$ can be seen in the right-hand-side of Fig.\ \ref{fig4}.
	\end{example}
	\begin{figure}[h!]
		\begin{center}
			\includegraphics*[scale=0.6]{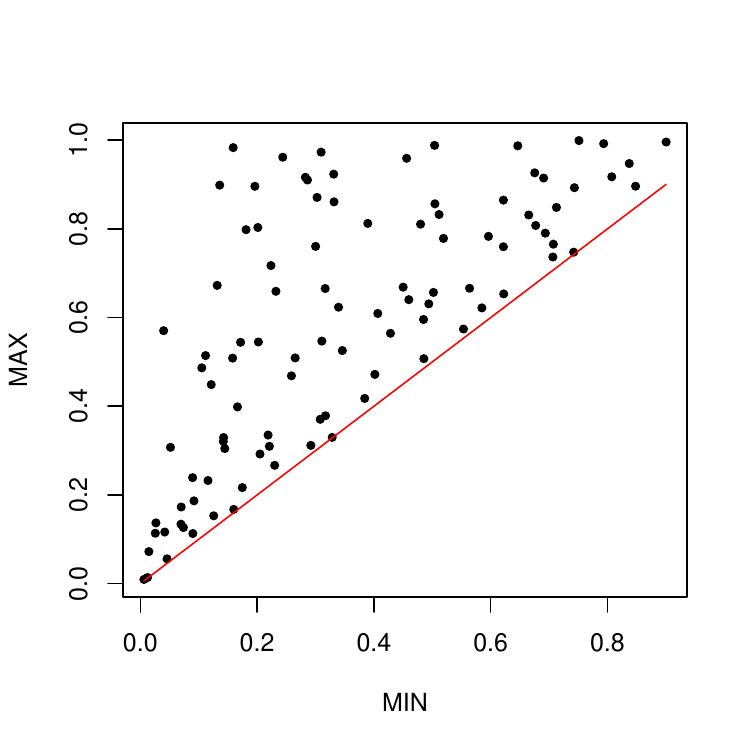}
			\includegraphics*[scale=0.6]{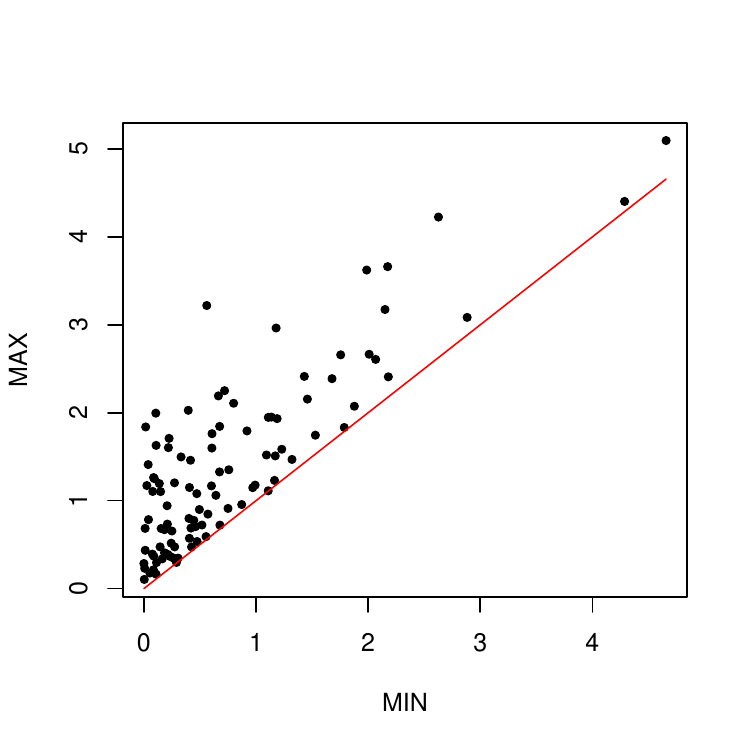}
			\captionsetup{margin=0.91cm}
			\caption{\small Simulated ordered data obtained from the Clayton copula in Example \ref{ex1} with standard uniform (left-hand-side) or exponential (right-hand-side) distributions.}\label{fig4}
		\end{center}
	\end{figure}
	\begin{example}\label{ex2}
		We now consider the Frank copula in Eq.\ (\ref{eq:Frank}) for $\theta=-1$ given by
		$$
		C(u,v)=\ln\left(1+\frac{(e^{u}-1)(e^{v}-1)}{e-1}\right),\qquad u,v\in[0,1].
		$$
		If $X=_{st}Y\sim\mathcal{U}(0,1)$, then the conditional distribution function of $Y|X=u$ for $u\in(0,1)$ is 
		$$
		C_{2|1}(v|u)=\frac{e^u(e^v-1)}{e-e^u+e^v(e^u-1)},\qquad v\in[0,1]
		$$
		and thus the inverse function of $C_{2|1}$ is  
		\begin{equation}\label{eq:inverse C 2}
			C^{-1}_{2|1}(z|u)=\ln\left[\frac{(e^u-e)z-e^u}{(e^u-1)z-e^u}\right],\qquad z\in[0,1].
		\end{equation}
		As before, from a sample with $100$ data we get the estimation for the bivariate Gini's index $\widehat{\mathsf{G}}(X,Y)=0.377477$ given in Definition \ref{def:empirical bivariate Gini} by using Eq.\ (\ref{eq:inverse C 2}). The exact value is $\mathsf{G}(X,Y)=2\cdot0.1827476=0.3654952$ (see Fig.\ \ref{fig2}, right). The data obtained for $(L_i,U_i)$ can be seen in the left-hand-side of Fig.\ \ref{fig5}.
		To get a sample from standard exponential distributions if $\tilde{X}=_{st}\tilde{Y}\sim\rm{Exp}(1)$  with this survival copula we just apply the inverse transform $\bar{F}^{-1}(z)=-\ln(z)$ to the above uniform data obtaining the estimation $\widehat{\mathsf{G}}(X,Y)=0.5\cdot1.069694=0.534847$. The exact value is $\mathsf{G}(X,Y)=0.539814$.
		The data obtained for $(\tilde{L}_i,\tilde{U}_i)$ can be seen in the right-hand-side of Fig.\ \ref{fig5}.
	\end{example}
	\par  
	In the last example we compute the empirical efficiency Gini's index of two coherent systems.
	\begin{example}\label{ex3}
		Consider the system $T_{16}$ given in Table \ref{tab:effGMD 1-4}, with i.i.d.\ components having standard exponential distributions. From a sample with $100$ data we get the estimation of the efficiency Gini's index $\widehat{\mathsf{G}}_4(T)=0.250$. The exact value is $\mathsf{G}_4(T)=0.273$ as shown in Table \ref{tab:effGMD 1-4}. The data obtained for $(X_{1:4},T)$ can be seen in the left-hand-side of Fig.\ \ref{fig6}. 
		Similarly, if one consider the system $T_{22}$ given in Table \ref{tab:effGMD 1-4}, with i.i.d.\ components having standard exponential distribution, then $\widehat{\mathsf{G}}_4(T)=0.416$. The exact value is $\mathsf{G}_4(T)=0.409$ as shown in Table \ref{tab:effGMD 1-4}. The data obtained for $(X_{1:4},T)$ can be seen in the right-hand-side of Fig.\ \ref{fig6}. 
	\end{example}
	\begin{figure}[h!]
		\begin{center}
			\includegraphics*[scale=0.6]{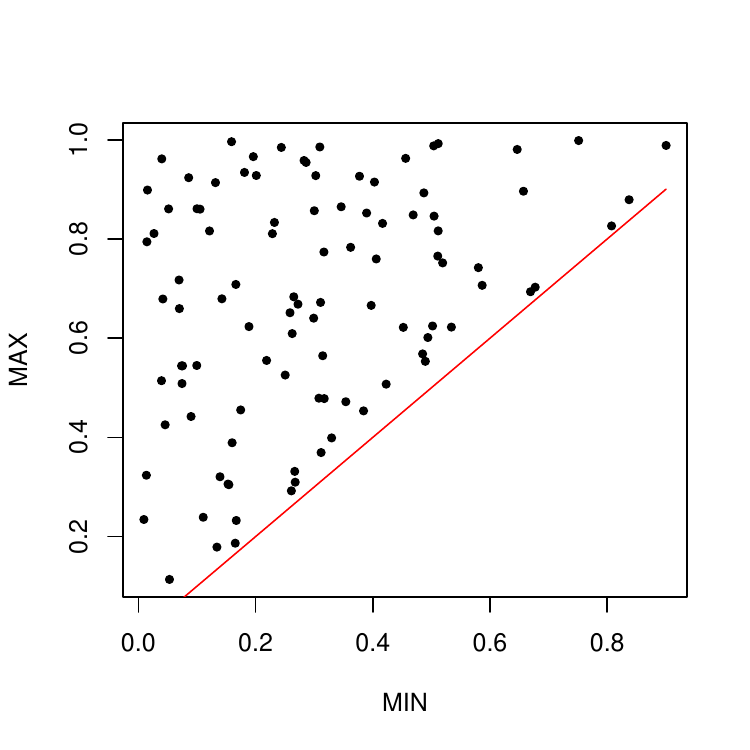}
			\includegraphics*[scale=0.6]{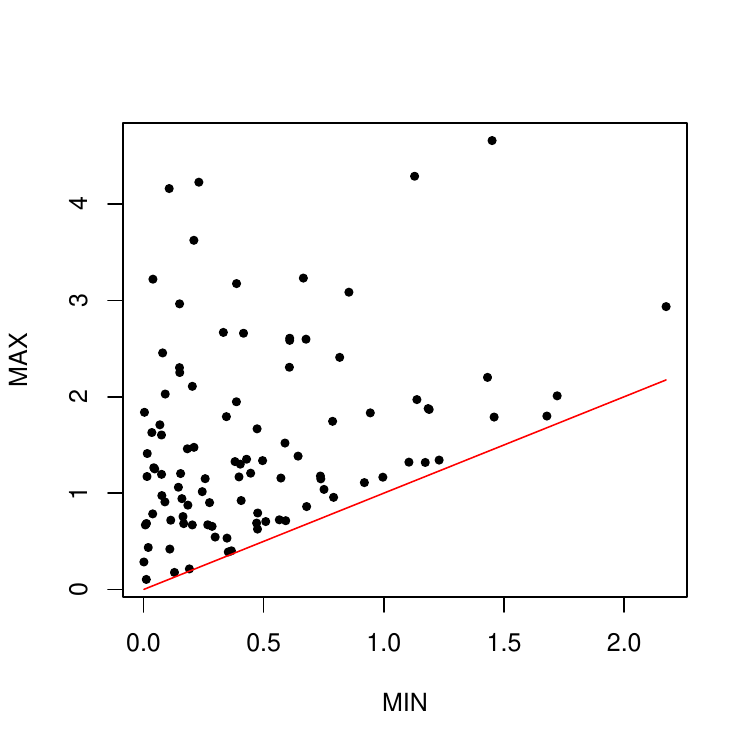}
			\captionsetup{margin=0.91cm}
			\caption{\small Simulated ordered data obtained from the Frank copula in Example \ref{ex2} with standard uniform (left-hand-side) or exponential (right-hand-side) distributions.}\label{fig5}
		\end{center}
	\end{figure}
	\begin{figure}[h!]
		\begin{center}
			\includegraphics*[scale=0.6]{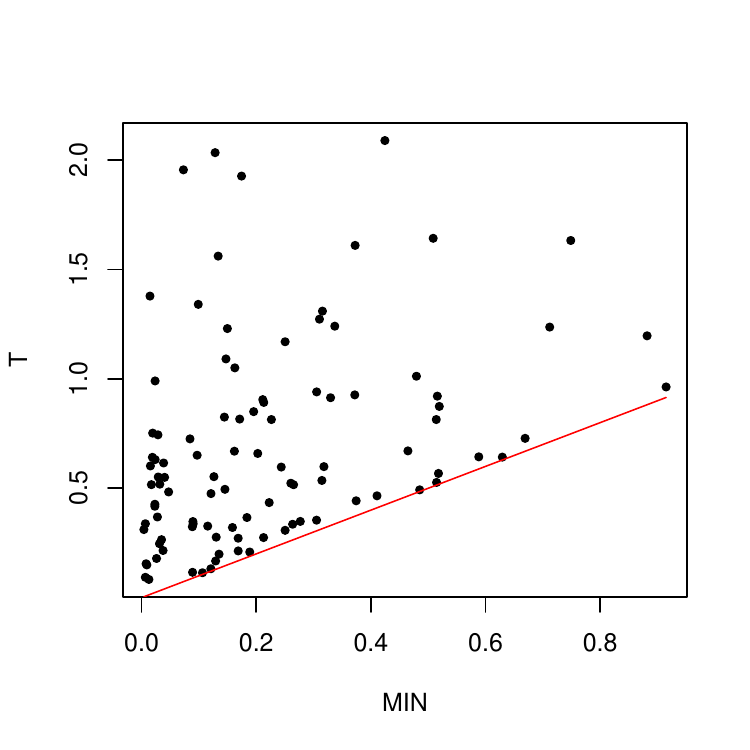}
			\includegraphics*[scale=0.6]{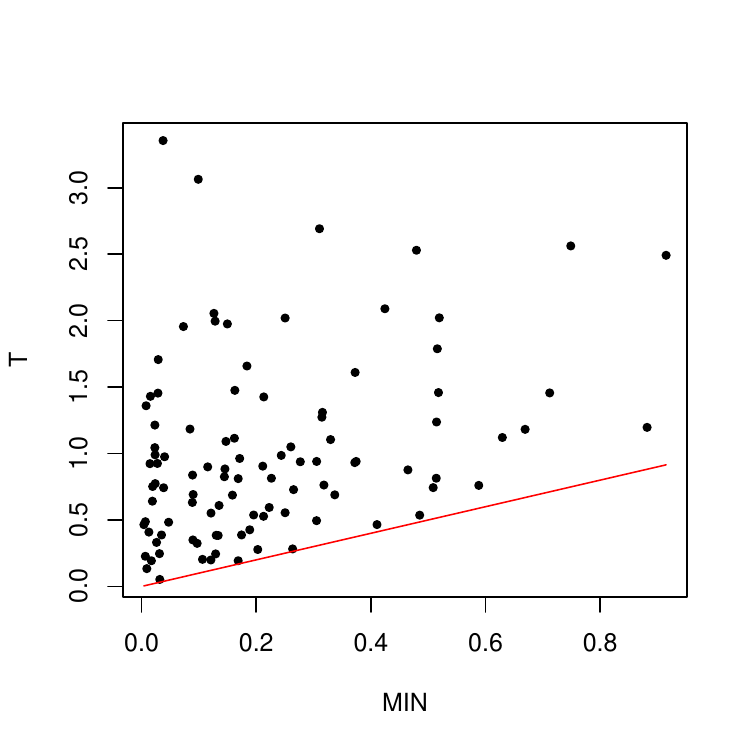}
			\captionsetup{margin=0.91cm}
			\caption{\small Simulated ordered data obtained of $(X_{1:4},T)$, for the systems $T_{16}$ (left-hand-side) and $T_{22}$ (right-hand-side) introduced in Table \ref{tab:effGMD 1-4}, with i.i.d.\ components having standard exponential distributions.}\label{fig6}
		\end{center}
	\end{figure}
	%

	\section*{Declaration of Competing Interest} 
	{\small 
		The authors declare that they have no known competing financial interests or personal relationships that could have appeared to influence the work reported in this paper.
	}
	
	\section*{Acknowledgements} 
	{\small
		M.C. is member of the group GNCS of INdAM (Istituto Nazionale di Alta Matematica). 
		This work is partially supported by MUR-PRIN 2022, project 2022XZSAFN ‘‘Anomalous Phenomena on Regular and Irregular Domains: Approximating Complexity for the Applied Sciences’’, and MUR-PRIN 2022 PNRR, project P2022XSF5H ‘‘Stochastic Models in Biomathematics and Applications’’. 
		M.C. expresses his warmest thanks to the Departamento de Estadística e Investigación Operativa of Universidad de Murcia for the hospitality during a three-month visit carried out in 2023. J.N. thanks the support of Ministerio de Ciencia e Innovación of Spain under grants PID2019-103971GB-I00/AEI/10.13039/501100011033 and MCIN/AEI/10.13039/501100011033 and the project TED2021-129813A-I00 with the support of the European Union “NextGenerationEU”/PRTR.
	}

\end{document}